\documentclass{article}

\usepackage[a4paper,width=150mm,top=25mm,bottom=25mm]{geometry}
\newcommand{\undertitle}[1]{}
\usepackage{amsthm}
\usepackage{titling}
\newtheorem{theorem}{Theorem}[section]

\newtheorem{proposition}[theorem]{Proposition}

\newtheorem{remark}[theorem]{Remark}
\usepackage{mathtools}
\usepackage{xspace} 
\usepackage{mathrsfs}
\usepackage{esint}
\usepackage{algorithm2e}
\usepackage{algorithmic}
\usepackage{graphicx}
\usepackage{tikz}
\usepackage{xcolor}
\usepackage{caption}
\usepackage{subcaption}
\usepackage[justification=centering]{caption} 
\usepackage{bm}

\usepackage{amsmath} 
\usepackage[utf8]{inputenc} 
\usepackage[T1]{fontenc}    
\usepackage{url}            
\usepackage{booktabs}       
\usepackage{amsfonts}       
\usepackage{nicefrac}       
\usepackage[numbers]{natbib}
\usepackage{doi}
\usepackage{mathrsfs}
\def\Qvec{\mathbf{Q}}
\def\Pvec{\mathbf{P}}
\def \Mvec{\mathbf{M}}

\def\Hvec{\mathbf{H}}

\def\d{\mathrm{d}}
\graphicspath{{Figures/}}

\usepackage{hyperref}       
\usepackage{cleveref}

\title{A variational approach to ferronematics with a dimension reduction}

\author{
Shilpa Dutta\thanks{Institute of Mathematics\newline 
~~~Julius-Maximilians-Universität Würzburg, Germany\newline
\href{mailto:shilpa.dutta@uni-wuerzburg.de}{\texttt{shilpa.dutta@uni-wuerzburg.de, shilpa.jumath@gmail.com}}}
}
	
\begin{document}
\maketitle

\begin{abstract}
We present a variational approach to ferronematics in a three dimensional setting. The ferronematic energy functional is described by two established theories: the Landau-de Gennes energy to explain the nematic part, the micromagnetic energy to explain the magnetic part, and coupling energies between them. We explicitly include the nonlocal stray field energy in a bulk setting and the coupling energy accounting for the nematic and stray field interaction. We prove the existence of an energy minimizer for the introduced ferronematic energy functional in a bulk setting. We then provide a reduced local ferronematic energy in a two-dimensional setting via $\Gamma$-convergence.
\end{abstract}
\textbf{\textit{Keywords:}}
Ferronematics, Magnetostatic energy, Existence theorem, Dimension reduction
\\[0.5em]
\textbf{\textit{MSCcodes:} Primary 49J27, 49J45; Secondary 76A15, 78A30}

\section{Introduction}
Ferronematics are complex materials that are suspensions of magnetic nanoparticles (MNPs) in a medium of nematic liquid crystals (NLCs). These materials feature spontaneous magnetization (existence of a ferromagnetic phase) even in the absence of an applied magnetic field. The theoretical prediction of such material was made about five decades ago by Brochard and de-Gennes \cite{brochard1970theory}, and after forty years, the experimental validation of that theory was announced in \cite{Mertelj}. Recently, a few mathematical studies \cite{Dalby, maity2021parameter, Canevari, canevari2025formation} have been conducted using a two-dimensional ferronematic model \cite{Bisht2020}.\newline
However, the ferronematic model in \cite{Bisht2020} pertains to Landau-de Gennes (LdG) theory for NLCs (see e.g. \cite{Gennes}) and Landau description of ferrofluid for MNPs (see e.g. \cite{Pleiner}). The model \cite{Bisht2020} is thus a simplified one in the sense that it neglects the presence of the magnetostatic or stray field in the system. However, the non-local stray field may have a significant influence on the defect localization under an activated magnetic field \cite{dutta2025study}. In that two-dimensional ferronematic model, the stray field energy is included using the Gioia and James approximation \cite{Giogia}.\newline\\
In this work, we present a three-dimensional ferronematic model which is based on two variational theories, e.g., the LdG theory for NLCs and the theory of micromagnetics for the spontaneous magnetization. To enable a quick view of the ferronematic energy, which we discuss in detail in the later part, we briefly present the ferronematic energy in a three-dimensional setting as follows. We assume that $\Omega\subset\mathbb R^3$, and as per convention, we denote the state variables due to Landau-de Gennes by $\Qvec$, and micromagnetics by $\Mvec$. The magnetization $\Mvec$ is subjected to an additional equation to account for a magnetic field $\Hvec_{\Mvec}$, known as the magnetostatic field or stray field, and the involved equation is renowned as the stationary Maxwell's equations (cf. \eqref{25}). In addition, we account for the influence of an activated magnetic field $\Hvec_{ext}$. The generalized ferronematic energy, following \cite{dutta2026existence} (up to almost the same notations) in a bulk setting, can be written as
\begin{eqnarray}\label{energy-intro}
\mathcal{E}_B\left(\Qvec, \Mvec\right) = \int_{\Omega} \biggl[\frac{K_1}{2}|\nabla \Qvec|^2 + \frac{K_2}{2}|\nabla \Mvec|^2 + f_B\left(\Qvec, \Mvec\right) + \frac{\mu_0}{2}|\Hvec_{\Mvec}|^2 - \mu_0\Mvec\cdot \Hvec_{ext}\nonumber\\
- \frac{\mu_0\gamma}{2}\Qvec\Mvec\cdot\Mvec - \frac{\chi_1\mu_0}{2}\Qvec\Hvec_{\Mvec}\cdot\Hvec_{\Mvec} - \frac{\chi_1\mu_0}{2}\Qvec\Hvec_{ext}\cdot\Hvec_{ext}\biggr]\;,
\end{eqnarray}
where the bulk potential is given by
\begin{displaymath}
f_B\left(\Qvec, \Mvec\right) = \frac{\alpha_1}{2}\mbox{tr}(\Qvec^2) + \frac{\alpha_2}{3}\mbox{tr}(\Qvec^3) + \frac{\alpha_3}{4}\mbox{tr}(\Qvec^2)^2 + \frac{\alpha_4}{2}|\Mvec|^2 + \frac{\alpha_5}{4}|\Mvec|^4 - \frac{\mu_0\gamma}{2}\Qvec\Mvec\cdot\Mvec.
\end{displaymath}
We then provide an existence theorem for this 3D-ferronematic energy (cf. \eqref{energy-intro}) and establish the 2D-ferronematic energy, proposed in \cite{dutta2025study} via a dimension reduction technique from $\Gamma$-convergence.\newline\\
Before we present our contribution, we discuss the earlier works on the ferronematic model based on \cite{Bisht2020}. In \cite{Dalby}, the authors explore the ferronematic energy in a one-dimensional setting, where they study order reconstruction phenomena and bifurcation analysis. The authors in \cite{maity2021parameter} perform asymptotic analysis with the minimizers of a rescaled two-dimensional ferronematic energy by letting elastic constants tend to zero, and they have presented numerical experiments to support their results. In \cite{Canevari}, the authors have studied a super-dilute regime of 2D ferronematics setting, which refers to the situation when the magnetic energy has a negligible contribution compared to the NLC energy and the potential due to nematic-magnetic interaction is hardly realized. They have shown that the LdG order parameters converge to a canonical harmonic map, while the limit of the magnetization is a singular profile of line defects that associate point defects following a minimal path. Their theoretical results are also complemented with some numerical results. Recently, in \cite{canevari2025formation}, they extended the earlier work on super-dilute regime from the Dirichlet type boundary conditions to the setting that considers Dirichlet boundary condition for $\Qvec$ and Neumann boundary condition for $\Mvec$. All these works deal with ferronematic energy based on \cite{Bisht2020} in a 2D-setting without accounting for the magnetostatic energy. We incorporate such a complex effect in the study and aim to establish a more generalised version. We also suggest \cite{hubert2008magnetic} for a detailed study on magnetostatic or stray field energy and its physical significance.\newline\\
We organize the next part of the paper as follows. \Cref{mathematical framework} introduces the 3D-ferronematic energy. In \Cref{existence results}, we deal with the existence of a minimizer for the introduced 3D-ferronematic energy. We present the reduced ferronematic energy, resembling the energy discussed in \cite{dutta2025study} in \Cref{derivation of the limiting ferronematic energy}. In \Cref{sec:numerical observations}, we present some numerical observations that support the present ferronematic energy over the earlier works. We close with the discussion with a concluding remark in \Cref{conclusions}.

\section{Mathematical framework}\label{mathematical framework}
We begin with assuming that a ferronematic sample fills a domain $\Omega\subset\mathbb R^3$, and the domain $\Omega$ is bounded, simply connected, and Lipschi. Also we assume a set
\begin{eqnarray}
\mathcal{Q}=\bigl\{\Qvec\in\mathbb R^{3\times 3}: \Qvec^T=\Qvec, \operatorname{tr}(\Qvec)=0\bigl\},\label{eq:3d_set}
\end{eqnarray} which is endowed with the inner product $\Qvec\cdot\Pvec = \mbox{tr}\left(\Qvec\Pvec\right)$ and described by the associated norm $|\Qvec|^2 = \mbox{tr}(\Qvec^2)$. The orientational matrix is defined by $\Qvec: \Omega\rightarrow \mathcal{Q}$ and the magnetization is defined by $\Mvec: \Omega\rightarrow \mathbb R^3$. 
The nematic-magnetic coupling energy $- \frac{\mu_0\gamma}{2}\Qvec\Mvec\cdot\Mvec$, that is the interaction between $\Qvec$ and $\Mvec$ is included in the bulk potential $f_B\left(\Qvec, \Mvec\right)$ (see e.g. \cite{Gennes, hubert2008magnetic, Mertelj}) as follows
\begin{align}\label{bulk_couplin}
f_B\left(\Qvec, \Mvec\right) = \frac{\alpha_1}{2}\mbox{tr}(\Qvec^2) + \frac{\alpha_2}{3}\mbox{tr}(\Qvec^3) + \frac{\alpha_3}{4}\mbox{tr}(\Qvec^2)^2 + \frac{\alpha_4}{2}|\Mvec|^2 + \frac{\alpha_5}{4}|\Mvec|^4 - \frac{\mu_0\gamma}{2}\Qvec\Mvec\cdot\Mvec,
\end{align}
where $\alpha_2, \alpha_3 > 0$ are the constants that are independent of the temperature but are dependent on the material properties, while the constant $\alpha_1$ is related to the absolute temperature in a linear setting as follows
$$\alpha_1 = \alpha_0\left(T - T^*\right),$$
where $\alpha_0$ and $T^*$ represent the characteristic temperatures of NLCs. The constant $\mu_0$ stands for the magnetic permeability of the vacuum  as per the standard convention, and $\gamma>0$ is a dimensionless coupling constant that is responsible to measure the interaction between $\Qvec$ and $\Mvec$ \cite{Mertelj}.\\
The magnetization $\Mvec:\Omega\rightarrow\mathbb R^3$ creates a magnetostatic field or stray field $\Hvec_{\Mvec}:\mathbb R^3\rightarrow\mathbb R^3$ \cite{brown1966magnetoelastic}, which satisfies the Maxwell's equations in a static setting as follows
\begin{eqnarray}\label{25}
\textbf{div} \left(\mu_0 \Hvec_{\Mvec} + \chi_{\Omega} \Mvec\right) &= 0 \mbox { in } \mathbb R^3,\nonumber\\
\textbf{curl}~\Hvec_{\Mvec} &= 0 \mbox { in } \mathbb R^3,
\end{eqnarray}
where we extend $\Mvec$ from $\Omega$ to $\mathbb R^3$ by zero and $\chi_{\Omega}: \mathbb R^3\rightarrow \{0,1\}$ is the characteristic function of $\Omega$. For the Maxwell equations (cf. \eqref{25}) in a static setting, we assume (as per the convention of micromagnetics) that electrical conductivity is negligible. In purely magnetic samples, the stray field $\Hvec_{\Mvec}$ can influence the domain observations \cite{hubert2008magnetic}. It may play a significant role in the formation of boundary as well as interior vortices, creation of domain walls and interior walls, etc. \cite{hubert2008magnetic, desimone2004recent}. Moreover, the presence of an external magnetic field $\Hvec_{ext}: \mathbb R^3\rightarrow\mathbb R^3$ influence on the orientation of the magnetization $\Mvec$ and this phenomenon is renowned as the Zeeman effect \cite{hubert2008magnetic}.\newline\\
Thanks to the Poincaré-de Rham lemma \cite{spivak2018calculus}, there exists a scalar potential $\Phi_{\Mvec}: \mathbb R^3\rightarrow\mathbb R$ that satisfies
\begin{eqnarray}
\Hvec_{\Mvec}=-\nabla\Phi_{\Mvec}
\end{eqnarray}
and so, we assume that $\Phi_{\Mvec}: \mathbb R^3\rightarrow\mathbb R$ is a solution of the following elliptic problem
\begin{eqnarray}
\begin{aligned}
\Delta\Phi_{\Mvec} = \nabla\cdot\Mvec \mbox{ on } \mathbb R^3.\\
\end{aligned}
\end{eqnarray}
Therefore, accounting these possible energy contributions in a ferronematic sample, the energy functional, following \cite{dutta2026existence}, in a bulk setting can be written as
\begin{eqnarray}\label{bulk_energy}
\mathcal{E}_B\left(\Qvec, \Mvec\right) = \int_{\Omega}\biggl[ \frac{K_1}{2}|\nabla \Qvec|^2 + \frac{K_2}{2}|\nabla \Mvec|^2 + f_B\left(\Qvec, \Mvec\right) + \frac{\mu_0}{2}|\Hvec_{\Mvec}|^2 - \mu_0\Mvec\cdot \Hvec_{ext}\nonumber\\
- \frac{\mu_0\gamma}{2}\Qvec\Mvec\cdot\Mvec - \frac{\chi_1\mu_0}{2}\Qvec\Hvec_{\Mvec}\cdot\Hvec_{\Mvec} - \frac{\chi_1\mu_0}{2}\Qvec\Hvec_{ext}\cdot\Hvec_{ext}\biggr]\;.
\end{eqnarray}
The energy functional $\mathcal{E}_B\left(\Qvec, \Mvec\right)$ consists of the following energy contributions:
\begin{itemize}
\item As already mentioned, the nematic part of the energy functional $\mathcal{E}_B\left(\Qvec, \Mvec\right)$ is explained by the Landau-de Gennes theory \cite{Gennes}. The energy density $\frac{K_1}{2}|\nabla\Qvec|^2$ is known as the Landau-de Gennes (LdG) energy density assuming the one-constant approximation, where $K_1>0$ is called the elastic constant. The part of the energy in the bulk potential $f_B\left(\Qvec, \Mvec\right)$, which measures the nematic ordering, generally given by
\begin{eqnarray}
f_B\left(\Qvec\right)=\frac{\alpha_1}{2}\mbox{tr}(\Qvec^2) + \frac{\alpha_2}{3}\mbox{tr}(\Qvec^3) + \frac{\alpha_3}{4}\mbox{tr}(\Qvec^2)^2
\end{eqnarray}
with $\alpha_3>0$ and $\alpha_1<0$ as established in \cite{Gennes}.
\item The magnetic part of the total energy is explained by the theory of micromagnetics \cite{brown1966magnetoelastic}. The energy density $\frac{K_2}{2}|\nabla\Mvec|^2$ is known as the exchange contribution and $K_2>0$ is the exchange constant. This exchange energy density is a penalized contribution, which measures the deviation of magnetic orientation from the equilibrium directions. Similarly, the deviation of magnetic saturation is accounted by the penalty term $\frac{\alpha_4}{2}|\Mvec|^2 + \frac{\alpha_5}{4}|\Mvec|^4$, where $\alpha_4<0$ and $\alpha_5>0$ are the constants, which explain magnetic phase transitions \cite{hubert2008magnetic, Pleiner}. To complete the micromagnetic aspects, we also incorporate the stray field or magnetostatic energy density by $\frac{\mu_0}{2}|\Hvec_{\Mvec}|^2$ and the Zeeman energy density by $-\mu_0\Mvec\cdot\Hvec_{ext}$ \cite{hubert2008magnetic}.
\item Furthermore, the generalized energy $\mathcal{E}_B\left(\Qvec, \Mvec\right)$ incorporates the coupling energy densities $- \frac{\mu_0\gamma}{2}\Qvec\Mvec\cdot\Mvec$ due to interaction between $\Qvec$ and $\Mvec$ with coupling constant $\gamma>0$, $-\frac{\chi_1\mu_0}{2}\Qvec\Hvec_{\Mvec}\cdot\Hvec_{\Mvec}$ that is responsible for the impacts of $\Hvec_{\Mvec}$ on $\Qvec$. The nematic-stray field coupling contribution follows the same structure as the nematic-magnetic coupling term. Finally, it is the term $-\frac{\chi_1\mu_0}{2}\Qvec\Hvec_{ext}\cdot\Hvec_{ext}$ that accounts the effect of $\Hvec_{ext}$ on $\Qvec$. Here $\chi_1$ stands for the magnetic susceptibility \cite{Mertelj}.
\end{itemize}
\section{Existence results}\label{existence results}
To present the existence result, we define the admissible space as follows:
\begin{displaymath}
\mathcal{Q}_{B} = \Big \{ \Qvec \in W^{1,2} \left(\Omega, \mathcal{Q}\right):  \Qvec|_{\partial \Omega} = \Qvec_{bd} \mbox{ in a sense of trace }\Big \},
\end{displaymath}
\begin{displaymath}
\mathcal{M}_{B} = \Big \{ \Mvec \in W^{1,2} \left( \Omega, \mathbb R^3 \right) : \Mvec|_{\partial \Omega} = \Mvec_{bd} \mbox{ in a sense of trace }\Big \},
\end{displaymath}
\begin{displaymath}
\mathcal{Q}_{B}\times\mathcal{M}_{B} = \Big \{ \left(\Qvec, \Mvec\right) : \Qvec \in \mathcal{Q}_{B} \mbox{ and } \Mvec \in \mathcal{M}_{B} \Big \},
\end{displaymath}
where $\Qvec_{bd}: \partial\Omega\rightarrow\mathcal{Q}$ and $\Mvec_{bd}: \partial\Omega\rightarrow\mathbb R^3$ are some given mappings.\newline\\
As a first key step, we state the existence theorem in the context of energy minimization of energy functional (cf. \eqref{bulk_energy}), below.
\begin{theorem}[Existence of energy minimizers]\label{thm:1}
Let $\Omega\subset\mathbb R^3$ be a bounded domain such that $\partial\Omega$ is Lipschitz and $\mathcal{Q}\times\mathcal{M}\ne\emptyset$. Also, we assume that $\Hvec_{ext}\in C\left(\overline{\Omega}; \mathbb R^3\right)$. Then the functional $\mathcal{E}_{B}\left(\Qvec, \Mvec\right)$ (cf. \eqref{bulk_energy}) has a minimum on the space $\mathcal{Q}_B\times\mathcal{M}_B$.
\end{theorem}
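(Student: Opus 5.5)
The plan is to apply the direct method of the calculus of variations on $\mathcal{Q}_B\times\mathcal{M}_B$, which is nonempty by assumption. The one structural point to settle first is the stray field map $\Mvec\mapsto\Hvec_{\Mvec}$. Extend $\Mvec$ by zero to $\mathbb R^3$. Then $\Hvec_{\Mvec}=-\nabla\Phi_{\Mvec}$ with $\Delta\Phi_{\Mvec}=\nabla\cdot(\chi_\Omega\Mvec)$, so $\Hvec_{\Mvec}$ is, up to sign, the Helmholtz (Leray-type) projection of $\chi_\Omega\Mvec$ onto gradient fields. This projection is the Fourier multiplier $\xi\xi^T/|\xi|^2$. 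By Calder\'on--Zygmund theory it is bounded on $L^p(\mathbb R^3;\mathbb R^3)$ for every $1<p<\infty$, so
\[
\|\Hvec_{\Mvec}\|_{L^p(\mathbb R^3)}\le C_p\|\Mvec\|_{L^p(\Omega)},\qquad 1<p<\infty .
\]
Together with the Sobolev embedding $W^{1,2}(\Omega)\hookrightarrow L^6(\Omega)$, this makes every term of $\mathcal{E}_B$ finite on the admissible class. It also makes the map $\Mvec\mapsto\Hvec_{\Mvec}$ linear and continuous from $L^p(\Omega)$ into $L^p(\mathbb R^3)$.

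\emph{Coercivity.} The indefinite terms are $\frac{\alpha_2}{3}\mbox{tr}(\Qvec^3)$, the two copies of $-\frac{\mu_0\gamma}{2}\Qvec\Mvec\cdot\Mvec$, the quadratic terms with $\alpha_1$ (of either sign) and $\alpha_4<0$, the Zeeman term, and the two $\chi_1$-couplings. Pointwise one has $|\Qvec\Mvec\cdot\Mvec|\le|\Qvec||\Mvec|^2\le|\Qvec|^3+|\Mvec|^3$ and $|\Qvec\Hvec_{\Mvec}\cdot\Hvec_{\Mvec}|\le|\Qvec|^3+|\Hvec_{\Mvec}|^3$. Integrating the last bound over $\Omega$ and using the $L^3$ estimate above gives
\[
\int_\Omega|\Hvec_{\Mvec}|^3\le C\|\Mvec\|_{L^3(\Omega)}^3 .
\]
Hence all indefinite terms are bounded by $C\int_\Omega(1+|\Qvec|^3+|\Mvec|^3+|\Mvec|)$, where the constant involves $\|\Hvec_{ext}\|_{L^\infty}$; this is finite since $\Hvec_{ext}\in C(\overline\Omega)$. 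The bound $s^3\le\delta s^4+C_\delta$, together with $\alpha_3,\alpha_5>0$ and $\mbox{tr}(\Qvec^2)^2=|\Qvec|^4$, then yields
\[
\mathcal{E}_B(\Qvec,\Mvec)\ge \frac{K_1}{2}\|\nabla\Qvec\|_{L^2}^2+\frac{K_2}{2}\|\nabla\Mvec\|_{L^2}^2+c\int_\Omega(|\Qvec|^4+|\Mvec|^4)-C .
\]
The nonnegative term $\frac{\mu_0}{2}|\Hvec_{\Mvec}|^2$ is simply dropped. This shows the infimum is finite. The fixed traces and Poincar\'e's inequality then bound a minimizing sequence $(\Qvec_k,\Mvec_k)$ in $W^{1,2}$.

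\emph{Compactness and lower semicontinuity.} Extract a subsequence with $\Qvec_k\rightharpoonup\Qvec$ and $\Mvec_k\rightharpoonup\Mvec$ weakly in $W^{1,2}$. By Rellich--Kondrachov (valid on the bounded Lipschitz domain $\Omega$), the convergence is strong in $L^p(\Omega)$ for every $p<6$. The trace operator is weakly continuous, so the limit is admissible; the constraints of symmetry and zero trace are closed. The Dirichlet terms are weakly lower semicontinuous by convexity. The remaining terms are continuous along the sequence:
\begin{itemize}
\item The polynomial terms of degree at most $4$ converge by strong $L^4$ convergence.
\item $\Hvec_{\Mvec_k}\to\Hvec_{\Mvec}$ strongly in $L^2\cap L^4(\mathbb R^3)$, by linearity of the stray field map and the $L^p$ bound. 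Hence $\int_\Omega|\Hvec_{\Mvec_k}|^2$ converges.
\item $\int_\Omega\Qvec_k\Hvec_{\Mvec_k}\cdot\Hvec_{\Mvec_k}$ converges, by H\"older's inequality with exponents $(2,4,4)$.
\item The Zeeman term and the $\Hvec_{ext}$-coupling are linear in the variables with bounded coefficients, so they converge.
\end{itemize}
Thus $\mathcal{E}_B(\Qvec,\Mvec)\le\liminf_k\mathcal{E}_B(\Qvec_k,\Mvec_k)$, and $(\Qvec,\Mvec)$ is a minimizer.

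The main obstacle I expect is the sign-indefinite nonlocal coupling $-\frac{\chi_1\mu_0}{2}\Qvec\Hvec_{\Mvec}\cdot\Hvec_{\Mvec}$. The usual energy estimate only gives $\|\Hvec_{\Mvec}\|_{L^2}\le\|\Mvec\|_{L^2}$, and that is too weak both to absorb this cubic term into the quartic potentials and to pass to the limit. The step that must work is the $L^p$-boundedness ($p=3,4$) of the gradient projection. If one prefers to avoid singular integrals, a fallback is to use $H^s$-regularity of $\chi_\Omega\Mvec$ for $s<\tfrac12$. This gives $\Hvec_{\Mvec}\in H^s_{loc}\hookrightarrow L^{p}$ for some $p>2$, but it requires redoing the exponent bookkeeping in the Young and H\"older steps.
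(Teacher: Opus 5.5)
Your argument is correct, but it takes a different route from the paper's.

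\textbf{The paper's route.} For fixed $\varepsilon$ the paper uses only the Lax--Milgram $L^2$ theory of the stray field. To absorb the cubic coupling $-\frac{\chi_1\mu_0}{2}\Qvec\Hvec_{\Mvec}\cdot\Hvec_{\Mvec}$ it adds the penalty $\frac{\varepsilon\mu_0}{2}|\Hvec_{\Mvec}|^4$. It then splits $|\Qvec||\Hvec_{\Mvec}|^2$ by Young's inequality into $|\Qvec|^2$ and $|\Hvec_{\Mvec}|^4$ and obtains minimizers of $\mathcal{E}^{\varepsilon}_B$ by the direct method. Finally it lets $\varepsilon\to0$, upgrading weak to strong convergence of $\nabla\Qvec^{\varepsilon},\nabla\Mvec^{\varepsilon}$ by comparing energies with the weak limit.

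\textbf{What your estimate changes.} Your Calder\'on--Zygmund bound $\|\Hvec_{\Mvec}\|_{L^p(\mathbb R^3)}\le C_p\|\Mvec\|_{L^p(\Omega)}$ makes this detour unnecessary. Through $\int_\Omega|\Hvec_{\Mvec}|^3\le C\int_\Omega|\Mvec|^3$ it gives coercivity of $\mathcal{E}_B$ itself. It also gives continuity of the nonlocal terms under strong $L^4$ convergence, so one application of the direct method suffices.

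\textbf{What the paper's limit step is missing.} Your estimate is exactly the ingredient that step lacks, for two reasons.
\begin{itemize}
\item The lower bound \eqref{lb:5} requires $\epsilon_2<\varepsilon/\gamma$. This puts a coefficient of order $1/\varepsilon$ on $|\Qvec|^2$, so the bound degenerates as $\varepsilon\to0$ and does not by itself give the $\varepsilon$-uniform bounds \eqref{eq:eb1}.
\item The paper's substitute for higher integrability is a $W^{1,2}(\mathbb R^3)$ bound on $\Hvec_{\Mvec^{\varepsilon}}$ from elliptic regularity. This fails in general: $\nabla\cdot(\chi_\Omega\Mvec)$ contains the surface charge $\Mvec\cdot\nu$ on $\partial\Omega$, so the normal component of $\Hvec_{\Mvec}$ jumps across $\partial\Omega$. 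In addition, the embedding of $W^{1,2}(\mathbb R^3)$ into $L^4(\mathbb R^3)$ is not compact.
\end{itemize}
With your estimate, $\varepsilon$-uniform coercivity of $\mathcal{E}^{\varepsilon}_B$ follows at once, but the penalization then serves no purpose.

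\textbf{Trade-off.} What the paper's scheme would buy, once such bounds are supplied, is that the fixed-$\varepsilon$ existence step uses only Hilbert-space tools. Your route trades this for one standard singular-integral estimate, and is shorter and self-contained.
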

\begin{remark}
We note that the admissible space $\mathcal{Q}_B\times\mathcal{M}_B$ is nonempty, that can be instantly concluded from an example provided in \cite{bethuel1994ginzburg}.
\end{remark}
\begin{remark}
The existence theorem (cf. \Cref{thm:1}) holds true with weak regularity assumption of $\Hvec_{ext}$. However for simplicity, we assume that it is continuous up to the boundary $\partial\Omega$.
\end{remark}
The first essential requirement to minimize the energy functional is the lower bound of the functional. As this lower bound cannot be instantly guaranteed, we first define a penalized energy functional in order to prove our main existence theorem, where the idea is to show that minimizers of this penalized energy functional converge to the minimizer of the actual energy functional (cf. \eqref{bulk_energy}). To this aim, we assume the penalized energy functional as follows
\begin{eqnarray}\label{penalized_energy}
\mathcal{E}^{\varepsilon}_B\left(\Qvec, \Mvec\right) = \int_{\Omega} \biggl[\frac{K_1}{2}|\nabla \Qvec|^2 + \frac{K_2}{2}|\nabla \Mvec|^2 + f_B\left(\Qvec, \Mvec\right) + \frac{\mu_0}{2}|\Hvec_{\Mvec}|^2 - \mu_0\Mvec\cdot \Hvec_{ext}\nonumber\\
- \frac{\chi_1\mu_0}{2}\Qvec\Hvec_{\Mvec}\cdot\Hvec_{\Mvec} - \frac{\chi_1\mu_0}{2}\Qvec\Hvec_{ext}\cdot\Hvec_{ext}+\frac{\varepsilon\mu_0}{2}|\Hvec_{\Mvec}|^4\bigg]\;, 
\end{eqnarray}
where
\begin{eqnarray}
f_B\left(\Qvec, \Mvec\right) = \frac{\alpha_1}{2}\mbox{tr}(\Qvec^2) + \frac{\alpha_2}{3}\mbox{tr}(\Qvec^3) + \frac{\alpha_3}{4}\mbox{tr}(\Qvec^2)^2 + \frac{\alpha_4}{2}|\Mvec|^2 + \frac{\alpha_5}{4}|\Mvec|^4 - \frac{\mu_0\gamma}{2}\Qvec\Mvec\cdot\Mvec.
\end{eqnarray}
We first show that the penalized energy functional $\mathcal{E}^{\varepsilon}_B\left(\Qvec, \Mvec\right)$ is bounded from below. To this aim, we let
\begin{eqnarray}
\mathcal{I}_1&=&  - \frac{\mu_0\gamma}{2}\Qvec\Mvec\cdot\Mvec\nonumber\\
&=& - \frac{\mu_0\gamma}{2}\sum_{i,j=1}^3 Q_{ij}M_iM_j\nonumber\\
&=& - \frac{\mu_0\gamma}{2}\left(Q_{11}M_1^2+Q_{22}M_2^2+Q_{33}M_3^2+2Q_{12}M_1M_2+2Q_{13}M_1M_3+2Q_{23}M_2M_3\right)\nonumber\\
&\ge& - \frac{\mu_0\gamma}{2}\left(|Q_{11}|M_1^2+|Q_{22}|M_2^2+|Q_{33}|M_3^2+|Q_{12}|\left(M_1^2+M_2^2\right)\right)\nonumber\\
& & - \frac{\mu_0\gamma}{2}\left(|Q_{13}|\left(M_1^2+M_3^2\right)+|Q_{23}|\left(M_2^2+M_3^2\right)\right)\nonumber\\
&\ge& - \frac{\mu_0\gamma}{2}\left(|Q_{11}|+|Q_{22}|+|Q_{33}|+|Q_{12}|+|Q_{13}|+|Q_{23}|\right)|\Mvec|^2\nonumber\\
&\ge& - \frac{\mu_0\gamma}{2}\left(\epsilon_1\left(|Q_{11}|+|Q_{22}|+|Q_{33}|+|Q_{12}|+|Q_{13}|+|Q_{23}|\right)^2+\frac{1}{\epsilon_1}|\Mvec|^4\right)\nonumber\\
&\ge& - \frac{\mu_0\gamma}{2}\left(\epsilon_1A|\Qvec|^2+\frac{1}{\epsilon_1}|\Mvec|^4\right),\label{lb:1}
\end{eqnarray}
\begin{eqnarray}
\mathcal{I}_2&=&  - \frac{\mu_0\chi_1}{2}\Qvec\Hvec_\Mvec\cdot\Hvec_\Mvec\nonumber\\
&\ge&  - \frac{\mu_0\chi_1}{2}\left(\epsilon_2\left(|Q_{11}|+|Q_{22}|+|Q_{33}|+|Q_{12}|+|Q_{13}|+|Q_{23}|\right)^2+\frac{1}{\epsilon_2}|\Hvec_\Mvec|^4\right)\nonumber\\
&\ge& - \frac{\mu_0\gamma}{2}\left(\epsilon_2A|\Qvec|^2+\frac{1}{\epsilon_2}|\Hvec_\Mvec|^4\right),\label{lb:2}
\end{eqnarray}
\begin{eqnarray}
\mathcal{I}_3&=&  - \frac{\mu_0\chi_1}{2}\Qvec\Hvec_{ext}\cdot\Hvec_{ext}\nonumber\\
&\ge&  - \frac{\mu_0\chi_1}{2}\left(\epsilon_3\left(|Q_{11}|+|Q_{22}|+|Q_{33}|+|Q_{12}|+|Q_{13}|+|Q_{23}|\right)^2+\frac{1}{\epsilon_3}|\Hvec_{ext}|^4\right)\nonumber\\
&\ge&  - \frac{\mu_0\chi_1}{2}\left(\epsilon_3A|\Qvec|^2+\frac{1}{\epsilon_3}|\Hvec_{ext}|^4\right),\label{lb:3}
\end{eqnarray}
\begin{eqnarray}
\mathcal{I}_4&=& -\mu_0\Mvec\cdot\Hvec_{ext}\nonumber\\
&\ge&-\mu_0\left(\epsilon_4|\Mvec|^2+\frac{1}{\epsilon_4}|\Hvec_{ext}|^4\right).\label{lb:4}
\end{eqnarray}
In the above $\epsilon_1, \epsilon_2, \epsilon_3, \epsilon_4>0$ are arbitrary constants due to Young's inequality.\newline\\
Assembling \eqref{lb:1}, \eqref{lb:2}, \eqref{lb:3}, and \eqref{lb:4}, we have
\begin{eqnarray}
& &\frac{\alpha_1}{2}\mbox{tr}(\Qvec^2) + \frac{\alpha_2}{3}\mbox{tr}(\Qvec^3) + \frac{\alpha_3}{4}\mbox{tr}(\Qvec^2)^2 + \frac{\alpha_4}{2}|\Mvec|^2 + \frac{\alpha_5}{4}|\Mvec|^4 - \frac{\mu_0\gamma}{2}\Qvec\Mvec\cdot\Mvec\nonumber\\
& &~~~~~- \mu_0\Mvec\cdot \Hvec_{ext}+\frac{\varepsilon\mu_0}{2}|\Hvec_{\Mvec}|^4- \frac{\mu_0\chi_1}{2}\Qvec\Hvec_\Mvec\cdot\Hvec_\Mvec - \frac{\mu_0\chi_1}{2}\Qvec\Hvec_{ext}\cdot\Hvec_{ext}\nonumber\\
&\ge& \frac{\alpha_1}{2}\mbox{tr}(\Qvec^2) + \frac{\alpha_2}{3}\mbox{tr}(\Qvec^3) + \frac{\alpha_3}{4}\mbox{tr}(\Qvec^2)^2 + \frac{\alpha_4}{2}|\Mvec|^2 + \frac{\alpha_5}{4}|\Mvec|^4 - \frac{\mu_0\gamma}{2}\left(\epsilon_1A_2|\Qvec|^2+\frac{1}{\epsilon_1}|\Mvec|^4\right)\nonumber\\
& &~~~~~- \frac{\mu_0\gamma}{2}\left(\frac{1}{\epsilon_2}A_3|\Qvec|^2+\epsilon_2|\Hvec_\Mvec|^4\right) - \frac{\mu_0\chi_1}{2}\left(\epsilon_3A_4|\Qvec|^2+\frac{1}{\epsilon_3}|\Hvec_{ext}|^4\right)\nonumber\\
& &~~~-\mu_0\left(\epsilon_4|\Mvec|^2+\frac{1}{\epsilon_4}|\Hvec_{ext}|^4\right)+\frac{\varepsilon\mu_0}{2}|\Hvec_{\Mvec}|^4\nonumber\\
&\ge& \frac{\alpha_1}{2}|\Qvec|^2+\frac{\alpha_2}{3}A_1|\Qvec|^3+\frac{\alpha_3}{4}|\Qvec|^4 + \frac{\alpha_4}{2}|\Mvec|^2 + \frac{\alpha_5}{4}|\Mvec|^4 - \frac{\mu_0\gamma}{2}\left(\epsilon_1A_2|\Qvec|^2+\frac{1}{\epsilon_1}|\Mvec|^4\right)\nonumber\\
& &~~~~~- \frac{\mu_0\gamma}{2}\left(\frac{1}{\epsilon_2}A_3|\Qvec|^2+\epsilon_2|\Hvec_\Mvec|^4\right) - \frac{\mu_0\chi_1}{2}\left(\epsilon_3A_4|\Qvec|^2+\frac{1}{\epsilon_3}|\Hvec_{ext}|^4\right)\nonumber\\
& &~~~~~~-\mu_0\left(\epsilon_4|\Mvec|^2+\frac{1}{\epsilon_4}|\Hvec_{ext}|^4\right)+\frac{\varepsilon\mu_0}{2}|\Hvec_{\Mvec}|^4\nonumber\\ 
&\ge&\frac{\alpha_3}{4}|\Qvec|^4+\frac{\alpha_2}{3}A_1|\Qvec|^3+\left(\frac{\alpha_1}{2}-\frac{\epsilon_1A_2\mu_0\gamma}{2} - \frac{A_3\mu_0\gamma}{2\epsilon_2}\right)|\Qvec|^2+\left(\frac{\alpha_5}{4}-\frac{\mu_0\gamma}{2\epsilon_1}\right)|\Mvec|^4\nonumber\\
& &+\left(\frac{\alpha_4}{2}-\mu_0\epsilon_4\right)|\Mvec|^2+\left(\frac{\varepsilon\mu_0}{2}-\frac{\mu_0\gamma\epsilon_2}{2}\right)|\Hvec_{\Mvec}|^4-\left(\frac{\mu_0\chi_1}{2\epsilon_3}-\frac{\mu_0}{\epsilon_4} \right)|\Hvec_{ext}|^4.\label{lb:5}
\end{eqnarray}
We pick the constants due to Young's inequality as $\epsilon_1>\frac{2\mu_0\gamma}{\alpha_5}>0$ and $\frac{\varepsilon}{\gamma}>\epsilon_2>0$ in \eqref{lb:5} so that the penalized energy functional (cf. \eqref{penalized_energy}) is bounded from below by the property of quadratic functions. 
Then from \eqref{lb:5}, it follows that the energy functional \eqref{penalized_energy} is bounded below for any $\varepsilon>0$. The remaining requirements are compactness and lower-semicontinuity of the energy functional $\mathcal{E}^{\varepsilon}_B\left(\Qvec, \Mvec\right)$, which is bounded from below.\newline\\
We begin with the compactness argument in order to apply the direct method in the calculus of variations, below.
\begin{proposition}\label{prop:comp}
Let $\Omega\subset\mathbb R^3$ be a bounded domain such that $\partial\Omega$ is Lipschitz and $\mathcal{Q}_B\times\mathcal{M}_B\ne\emptyset$. Moreover, we assume that $\Hvec_{ext}\in C\left(\overline{\Omega}; \mathbb R^3\right)$. Then the compactness argument can be stated as follows:\\
if $\mathcal{E}^{\varepsilon}_B\left(\Qvec_k, \Mvec_k\right)\le\Upsilon$ for a sequence $\left(\Qvec_k, \Mvec_k\right)\in\mathcal{Q}_B\times\mathcal{M}_B$ and some $\Upsilon\in\mathbb R$ independent of $k$, then $\left(\Qvec_k, \Mvec_k\right)$ has a weakly converging subsequence in $\mathcal{Q}_B\times\mathcal{M}_B$.
\end{proposition}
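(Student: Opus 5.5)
The plan is to derive from the energy bound $\mathcal{E}^{\varepsilon}_B(\Qvec_k,\Mvec_k)\le\Upsilon$ a uniform bound on $\|\Qvec_k\|_{W^{1,2}}$ and $\|\Mvec_k\|_{W^{1,2}}$. Reflexivity of $W^{1,2}$ then gives a weakly convergent subsequence, and weak closedness of the affine trace constraints shows the limit stays in $\mathcal{Q}_B\times\mathcal{M}_B$.

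For the bound, I will split the energy density into the gradient part $\frac{K_1}{2}|\nabla\Qvec_k|^2+\frac{K_2}{2}|\nabla\Mvec_k|^2$, the nonnegative stray field term $\frac{\mu_0}{2}|\Hvec_{\Mvec_k}|^2$, and the remaining lower-order terms. The lower-order terms are estimated pointwise exactly as in \eqref{lb:5}, with the same choices of $\epsilon_1$ and $\epsilon_2$ (relative to the fixed $\varepsilon$). The coefficients of $|\Qvec|^4$ and $|\Mvec|^4$ are then positive, and the coefficient of $|\Hvec_{\Mvec}|^4$ is nonnegative. Since a quartic polynomial with positive leading coefficient dominates any quadratic up to a constant, I also get
\[
\tfrac{\alpha_3}{8}|\Qvec|^4+c|\Mvec|^4-C \le (\text{lower-order density}),
\]
with $C$ depending only on the coefficients and on $\|\Hvec_{ext}\|_{L^\infty(\Omega)}$. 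This is finite because $\Hvec_{ext}\in C(\overline{\Omega};\mathbb R^3)$ and $\Omega$ is bounded. Integrating over $\Omega$ yields
\[
\frac{K_1}{2}\|\nabla\Qvec_k\|_{L^2}^2+\frac{K_2}{2}\|\nabla\Mvec_k\|_{L^2}^2+\tfrac{\alpha_3}{8}\|\Qvec_k\|_{L^4}^4+c\|\Mvec_k\|_{L^4}^4\le \Upsilon + C|\Omega|.
\]
Because $|\Omega|<\infty$, the $L^4$ bounds control the $L^2$ norms, so the full $W^{1,2}$ norms are uniformly bounded. An alternative is a Poincaré-type inequality using the fixed traces, writing $\Qvec_k=\Qvec_0+(\Qvec_k-\Qvec_0)$ for a fixed admissible $\Qvec_0$, which is possible since $\mathcal{Q}_B\times\mathcal{M}_B\neq\emptyset$. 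I prefer the direct $L^4$ route because it avoids needing the trace data.

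Next I extract subsequences with $\Qvec_k\rightharpoonup\Qvec$ and $\Mvec_k\rightharpoonup\Mvec$ weakly in $W^{1,2}$. By Rellich--Kondrachov, valid since $\partial\Omega$ is Lipschitz, these converge strongly in $L^2$, and in fact in $L^p$ for $p<6$. The pointwise constraints that $\Qvec$ is symmetric and traceless are linear and closed, so they pass to the limit. The trace operator $W^{1,2}(\Omega)\to L^2(\partial\Omega)$ is linear and continuous, hence weakly continuous, so $\Qvec|_{\partial\Omega}=\Qvec_{bd}$ and $\Mvec|_{\partial\Omega}=\Mvec_{bd}$. Hence the limit lies in $\mathcal{Q}_B\times\mathcal{M}_B$.

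I expect the main obstacle to be the lower bound in the first step, namely checking that the coupling term $-\frac{\chi_1\mu_0}{2}\Qvec\Hvec_{\Mvec}\cdot\Hvec_{\Mvec}$ is genuinely absorbed and does not spoil coercivity. It is handled by Young's inequality against the penalty $\frac{\varepsilon\mu_0}{2}|\Hvec_{\Mvec}|^4$ and the $|\Qvec|^4$ term, so the bound is uniform in $k$ but not in $\varepsilon$. This is acceptable here since $\varepsilon$ is fixed. Otherwise everything is standard. If needed, I would additionally note that $\Hvec_{\Mvec_k}$ is bounded in $L^2(\mathbb R^3)$ and in $L^4$, giving a weakly convergent subsequence of stray fields for later use in lower semicontinuity.
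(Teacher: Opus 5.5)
Your proposal is correct and takes essentially the same route as the paper. The lower bound \eqref{lb:5}, with $\epsilon_1,\epsilon_2$ chosen relative to the fixed $\varepsilon$, turns the energy bound into uniform $W^{1,2}$ bounds, and weak compactness, Rellich--Kondrachov and the trace operator then finish the argument; you are also more careful than the paper at two points it leaves implicit, since you keep the quartic terms to bound $\Qvec_k,\Mvec_k$ in $L^4$ (hence $L^2$) and not only their gradients, and you use weak continuity of the trace, together with closedness of the symmetric traceless constraint, to check that the weak limit satisfies $\Qvec|_{\partial\Omega}=\Qvec_{bd}$ and $\Mvec|_{\partial\Omega}=\Mvec_{bd}$.
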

\begin{proof}
Assume that $\left(\Qvec_k, \Mvec_k\right)_k \subseteq \mathcal{Q}_B \times \mathcal{M}_B$ is an minimizing sequence of the functional $\mathcal{E}^{\varepsilon}_B\left(\Qvec, \Mvec\right)$ such that $\mathcal{E}^{\varepsilon}_B\left(\Qvec, \Mvec\right)\le\Upsilon$ for some positive $\Upsilon\in\mathbb{R}$ and all $k\in \mathbb{N}$. Due to \eqref{lb:5}, we have
\begin{eqnarray}
\int_{\Omega} \frac{K_1}{2}|\nabla \Qvec_k|^2 + \frac{K_2}{2}|\nabla \Mvec_k|^2 + \frac{\mu_0}{2}|\Hvec_{\Mvec_k}|^2\le\Upsilon
\end{eqnarray}
uniformly in $k\in\mathbb{N}$. Thanks to the Banach-Alaoglu theorem, up to a subsequence (without reindexing), we have
\begin{eqnarray}
\Qvec_k&\rightharpoonup&\Qvec \mbox{ weakly in } W^{1,2}\left(\Omega; \mathbb R^{3\times 3}\right),\label{eq:Q-weak}\\
\Mvec_k&\rightharpoonup&\Mvec \mbox{ weakly in } W^{1,2}\left(\Omega; \mathbb R^{3}\right).\label{eq:M-weak}
\end{eqnarray}
By the Kondrachov compact embedding [see e.g. \cite{ciarlet2021mathematical}], it holds that
\begin{eqnarray}
\Qvec_k&\rightarrow&\Qvec \mbox{ strongly in } L^2\left(\Omega; \mathbb R^{3\times 3}\right), \label{eq:Q-strng}\\
\Mvec_k&\rightarrow&\Mvec \mbox{ strongly in } L^2\left(\Omega; \mathbb R^{3}\right). \label{eq:M-strng}
\end{eqnarray}
Finally, thanks to compactness property of trace operator $\gamma:W^{1,2}\rightarrow L^2$ [see e.g. \cite{ciarlet2021mathematical}], up to a subsequence, we have
\begin{eqnarray}
\gamma\left(\Qvec_k\right)\rightarrow~\gamma\left(\Qvec\right)\mbox{ a.e. in }\partial\Omega\mbox{ as }k\rightarrow\infty,\label{eq:ex7}\\
\gamma\left(\Mvec_k\right)\rightarrow~\gamma\left(\Mvec\right)\mbox{ a.e. in }\partial\Omega\mbox{ as }k\rightarrow\infty.\label{eq:ex8}  
\end{eqnarray}
\end{proof}
Next we treat the stationary Maxwell's equation (cf. \eqref{25}). We refer to \cite{Desimmone, MK, dutta2026existence} for a similar explanation; however, here we present a self-contained explanation that fits in the present setting.
\begin{proposition}\label{lem-str}
Let $\chi_{\Omega_k} \textbf{M}_k \rightarrow \chi_{\Omega} \textbf{M}\mbox{ strongly in } L^{2}\left(\mathbb R^3; \mathbb R^3 \right)$ and $\textbf{H}_{\textbf{M}_k}$ be a  solution of Maxwell equation corresponding to $\chi_{\Omega_k}\textbf{M}_k$. Then $\textbf{H}_{\textbf{M}_k} \rightharpoonup \textbf{H}_{\textbf{M}}$ weakly in $L^{2}\left(\mathbb R^3; \mathbb R^3 \right)$, where $\textbf{H}_{\textbf{M}}$ is a solution of stationary Maxwell's equations \eqref{25} corresponding to $\chi_{\Omega}\textbf{M}$, and $||\Hvec_{\Mvec_k}||_{L^2\left(\mathbb R^3; \mathbb R^3\right)}\rightarrow ||\Hvec_{\Mvec}||_{L^2\left(\mathbb R^3; \mathbb R^3\right)}$ as $k\rightarrow\infty$.
\end{proposition}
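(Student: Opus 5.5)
The plan is to recast the stationary Maxwell equations \eqref{25} as a Hilbert-space projection problem in $L^2(\mathbb R^3;\mathbb R^3)$. Let $\mathcal{G}=\overline{\{\nabla\varphi:\varphi\in C_c^\infty(\mathbb R^3)\}}^{L^2}$ be the closed subspace of gradient fields. The weak formulation of \eqref{25} asks that $\Hvec_{\Mvec}\in\mathcal{G}$ (the curl-free condition) and that
\begin{eqnarray*}
\int_{\mathbb R^3}\left(\mu_0\Hvec_{\Mvec}+\chi_{\Omega}\Mvec\right)\cdot\nabla\varphi\,\d x=0\quad\mbox{for all }\varphi\in C_c^\infty(\mathbb R^3).
\end{eqnarray*}
By the Lax--Milgram theorem, or simply the Riesz representation theorem on $\mathcal{G}$, this has a unique solution, namely $\Hvec_{\Mvec}=-\mu_0^{-1}P_{\mathcal{G}}(\chi_\Omega\Mvec)$, where $P_{\mathcal{G}}$ is the orthogonal projection onto $\mathcal{G}$. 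Equivalently, $\Hvec_{\Mvec}=-\nabla\Phi_{\Mvec}$ with $\Phi_{\Mvec}$ in the homogeneous space $\dot W^{1,2}(\mathbb R^3)$, matching the potential formulation above. Testing with $\varphi=\Phi_{\Mvec}$, justified by density in $\dot W^{1,2}$, gives the a priori bound
\begin{eqnarray*}
\mu_0\|\Hvec_{\Mvec}\|_{L^2(\mathbb R^3)}^2=\int_{\mathbb R^3}\chi_\Omega\Mvec\cdot\Hvec_{\Mvec}\,\d x\le\|\chi_\Omega\Mvec\|_{L^2}\|\Hvec_{\Mvec}\|_{L^2},
\end{eqnarray*}
so $\|\Hvec_{\Mvec}\|_{L^2}\le\mu_0^{-1}\|\chi_\Omega\Mvec\|_{L^2}$. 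The map $\chi_\Omega\Mvec\mapsto\Hvec_{\Mvec}$ is therefore linear and bounded, and in fact $1/\mu_0$-Lipschitz.

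With this in hand, the convergence follows directly. Set $\Mvec_k^\ast=\chi_{\Omega_k}\Mvec_k$ and $\Mvec^\ast=\chi_\Omega\Mvec$. By linearity, $\Hvec_{\Mvec_k}-\Hvec_{\Mvec}$ solves \eqref{25} with datum $\Mvec_k^\ast-\Mvec^\ast$. The a priori bound then gives
\begin{eqnarray*}
\|\Hvec_{\Mvec_k}-\Hvec_{\Mvec}\|_{L^2(\mathbb R^3)}\le\mu_0^{-1}\|\Mvec_k^\ast-\Mvec^\ast\|_{L^2(\mathbb R^3)}\to0.
\end{eqnarray*}
So we get strong convergence, which implies both the weak convergence and the convergence of the norms claimed in the statement.

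If one prefers to mirror a weak-compactness argument, as in the cited references, the alternative route is as follows. The uniform bound shows that $(\Hvec_{\Mvec_k})$ is bounded in $L^2$, so a subsequence converges weakly to some $\Hvec$. Since $\mathcal{G}$ is weakly closed, $\Hvec\in\mathcal{G}$. Passing to the limit in the weak formulation, using weak convergence of $\Hvec_{\Mvec_k}$ and strong convergence of $\Mvec_k^\ast$, shows that $\Hvec$ solves \eqref{25} with datum $\Mvec^\ast$. Uniqueness then identifies $\Hvec=\Hvec_{\Mvec}$ and gives convergence of the whole sequence. For the norms, the energy identity gives $\mu_0\|\Hvec_{\Mvec_k}\|^2=\int \Mvec_k^\ast\cdot\Hvec_{\Mvec_k}$. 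This is the pairing of a strongly convergent sequence with a weakly convergent one, so it converges to $\int\Mvec^\ast\cdot\Hvec_{\Mvec}=\mu_0\|\Hvec_{\Mvec}\|^2$.

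The main obstacle I expect is the functional-analytic setup on the unbounded domain $\mathbb R^3$. One has to make precise the solution space and the meaning of \eqref{25} with distributional divergence and curl. One also has to check that the curl-free condition is equivalent to membership in $\mathcal{G}$, which uses the Poincaré--de Rham lemma on the simply connected space $\mathbb R^3$ together with a density argument for $L^2$ curl-free fields. Finally, uniqueness requires that any $L^2$ field that is both a gradient and divergence-free must vanish. I would handle all of this by working in $\dot W^{1,2}(\mathbb R^3)$ modulo constants, where Lax--Milgram applies directly.
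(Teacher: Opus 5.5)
Your argument is correct, but your main route differs from the paper's. You identify the solution operator as $\chi_\Omega\Mvec\mapsto\Hvec_{\Mvec}=-\mu_0^{-1}P_{\mathcal G}(\chi_\Omega\Mvec)$, a bounded linear map with norm at most $1/\mu_0$. Strong convergence of the data then gives $\Hvec_{\Mvec_k}\to\Hvec_{\Mvec}$ strongly in $L^2(\mathbb R^3;\mathbb R^3)$ for the whole sequence, and both claims follow at once.

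The paper follows the weak-compactness template of your alternative route. Its key steps are:
\begin{itemize}
\item It takes $\sup_k\|\Hvec_{\Mvec_k}\|_{L^2}<\infty$ from the energy lower bound \eqref{lb:5} together with a uniform energy upper bound, not from the equation itself. These assumptions are not part of the proposition as stated.
\item It extracts a weakly convergent subsequence and identifies the limit by uniqueness and linearity.
\item It gets norm convergence by testing with $U_{\Mvec_k}$ and splitting $\chi_\Omega\Mvec_k=\chi_\Omega(\Mvec_k-\Mvec)+\chi_\Omega\Mvec$. This is essentially your strong-times-weak pairing.
\end{itemize}

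Your version has several advantages:
\begin{itemize}
\item It is self-contained, since the a priori bound comes from the equation.
\item It needs no subsequence extraction.
\item It states the strong convergence explicitly. The paper's weak-plus-norm convergence also implies this in a Hilbert space, but the paper does not say so.
\item Your functional setting, $\dot W^{1,2}(\mathbb R^3)$ modulo constants or equivalently the closed subspace $\mathcal G$, is the right one. The paper's Lax--Milgram argument in $H^1(\mathbb R^3)$ needs $\int|\nabla U|^2$ to control the full $H^1$ norm, and that coercivity fails on $\mathbb R^3$.
\end{itemize}
The paper's route mainly buys conformity with the standard micromagnetics references it cites.

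There is one harmless sign slip. Testing with $\varphi=\Phi_{\Mvec}$, where $\Hvec_{\Mvec}=-\nabla\Phi_{\Mvec}$, gives $\mu_0\|\Hvec_{\Mvec}\|_{L^2}^2=-\int_{\mathbb R^3}\chi_\Omega\Mvec\cdot\Hvec_{\Mvec}\,\d x$, with a minus sign. This changes neither your Cauchy--Schwarz bound nor the convergence of the energy identity in your alternative route.
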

\begin{proof}
Assuming energy functional $\mathcal{E}^{\varepsilon}_B\left(\Qvec, \Mvec\right)<\infty$ in $\mathcal{Q}_B\times\mathcal{M}_B$, the curl-free property of $\Hvec_{\Mvec}$ can be understood at least in distributional sense.
Thus we regard $\textbf{H}_{\textbf{M}}$ as a member of the function space
\begin{eqnarray}
\mathcal{H} = \{\textbf{H}\in L^2\left(\mathbb R^3; \mathbb R^3\right): \mbox{curl}\textbf{H} = 0 \mbox{ in distributional sense} \}   
\end{eqnarray}
and magnetostatic field $\textbf{H}_{\textbf{M}}$ satisfies
\begin{eqnarray}
\int_{\mathbb R^3}\textbf{H}_{\textbf{M}}\cdot\bm{\psi} dx = -\int_{\mathbb R^3}\chi_{\Omega}\textbf{M}\cdot\bm{\psi} dx ~~\forall\bm{\psi}\in \mathcal{H}.\label{eq:17'}   \end{eqnarray}
Since $\textbf{H}\in L^2\left(\mathbb R^3; \mathbb R^3\right)$, the partial derivative of $\textbf{H}$ can be understood in the sense of distribution and hence by Poincaré-de Rham lemma (see e.g.\ \cite{spivak2018calculus}), it holds that $\nabla\times \textbf{H} = 0$ in a distributional sense iff $\textbf{H} = -\nabla V$ for some $V\in H^1\left(\mathbb R^3\right)$. Then \eqref{eq:17'} can be written as
\begin{eqnarray}\label{18}
\int_{\mathbb R^3}\nabla \textbf{U}_{\textbf{M}}\cdot\nabla Vdx = \int_{\mathbb R^3}\chi_{\Omega}\textbf{M}\cdot \nabla Vdx ~~\forall~V\in H^1\left(\mathbb R^3\right),
\end{eqnarray}
where $\textbf{H}_{\textbf{M}} = -\nabla U_{\textbf{M}}$ with $U_{\textbf{M}}\in H^1\left(\mathbb R^3\right)$.\\
\\
We define a bilinear form $B[U_{\textbf{M}}, V]$ in the following way
\begin{eqnarray}\label{19}
B[U_{\textbf{M}}, V] = \int_{\mathbb R^3}\nabla U_{\textbf{M}}\cdot\nabla Vdx = \int_{\mathbb R^3}\chi_{\Omega}\textbf{M}\cdot \nabla Vdx~~\forall~U_{\textbf{M}}, V\in H^1\left(\mathbb R^3\right),
\end{eqnarray}
for which
$$|B\left(U_{\textbf{M}}, V\right)|\le \mu_0||U_{\textbf{M}}||_{H^1}||V||_{H^1}~~\forall~~U_{\textbf{M}}, V\in H^1\left(\mathbb R^3\right)$$
and
$$B\left(U_{\textbf{M}}, U_{\textbf{M}}\right)\ge \mu_0||U_{\textbf{M}}||^2.$$
Thanks to Lax-Milgram theorem (see e.g.\ \cite{evans2022partial}), there exists $U_{\textbf{M}}\in H^1\left(\mathbb R^3\right)$ uniquely and $U_{\textbf{M}}$ satisfies
\begin{eqnarray}\label{20}
B[U_{\textbf{M}}, V] = \int_{\mathbb R^3}\nabla U_{\textbf{M}}\cdot\nabla V dx = \int_{\mathbb R^3}\chi_{\Omega}\textbf{M}\cdot \nabla Vdx~~\forall V\in H^1\left(\mathbb R^3\right).
\end{eqnarray}
Thus, \eqref{25} possesses a unique solution $\textbf{H}_{\textbf{M}}\in L^2\left(\mathbb R^3; \mathbb R^3\right)$ corresponding to $\textbf{M}\in L^2\left(\mathbb R^3; \mathbb R^3\right)$.\\
The remaining steps are to follow.
By the lower bound \eqref{lb:5} and the uniform upper bound of the energy functional \eqref{penalized_energy}, we deduce that
\begin{eqnarray}
\sup_{k\in \mathbb N}\int_{\mathbb R^3}|\textbf{H}_{\textbf{M}_k}|^2dx<\infty,\label{eq:bound-str}
\end{eqnarray}
and as a consequence, up to a subsequence (without re-indexing), it holds that 
\begin{eqnarray}
\textbf{H}_{\textbf{M}_k}\rightharpoonup \Lambda \mbox{ weakly in } L^2\left(\mathbb R^3; \mathbb R^3\right).\label{eq:weak-str}
\end{eqnarray}
The uniqueness of the solution of \eqref{25} and the linearity of \eqref{25} conclude that $\Lambda=\textbf{H}_{\textbf{M}}$. Since $\textbf{H}_{\textbf{M}}\in L^2\left(\mathbb R^3; \mathbb R^3\right)$, $\nabla\times \textbf{H}_{\textbf{M}}= 0$ in the sense of distribution, immediately follows. Furthermore, plugging in $V=U_{\textbf{M}}$ in \eqref{20}, we can write
\begin{eqnarray}
\int_{\mathbb R^3}|\nabla U_{\textbf{M}_k}|^2dx&=&\int_{\mathbb R^3}\chi_{\Omega}\textbf{M}_k\cdot\nabla U_{\textbf{M}_k}dx\nonumber\\
&=& \int_{\mathbb R^3}\chi_{\Omega}\left(\textbf{M}_k-\textbf{M}\right)\cdot\nabla U_{\textbf{M}_k}dx+\int_{\mathbb R^3}\chi_{\Omega}\textbf{M}\cdot\nabla U_{\textbf{M}_k}dx.\label{eq:strg-str}
\end{eqnarray}
Thanks to the H\"older inequality, it holds that
\begin{eqnarray}
& &\bigg|\int_{\mathbb R^3}\chi_{\Omega}\left(\textbf{M}_k-\textbf{M}\right)\cdot\nabla U_{\textbf{M}_k}dx\bigg|\nonumber\\
&\le&\int_{\mathbb R^3}\bigg|\chi_{\Omega}\left(\textbf{M}_k-\textbf{M}\right)\cdot\nabla U_{\textbf{M}_k}\bigg|dx\nonumber\\
&\underset{\mbox{H\"older}}{\le}& ||\chi_{\Omega}\left(\textbf{M}_k-\textbf{M}\right)||_{L^2}||\nabla U_{\textbf{M}_{k}}||_{L^2}\rightarrow 0 \mbox{ as } k\rightarrow\infty.\label{eq:Hol}
\end{eqnarray}
\noindent The last convergence follows due to the strong convergence \eqref{str-M} and the uniform bound \eqref{eq:bound-str}. As $\nabla U_{\textbf{M}_{k}}\rightharpoonup \nabla U_{\textbf{M}}$ weakly in $L^2\left(\mathbb R^3; \mathbb  R^3\right)$ and $\chi_{\Omega}\Mvec\in L^2\left(\mathbb R^3; \mathbb R^3\right)$, by the definition of weak-convergence, it holds that
\begin{eqnarray}
\int_{\mathbb R^3}\textbf{M}\cdot\nabla U_{\textbf{M}_k}dx = \int_{\mathbb R^3}\textbf{M}\cdot\nabla U_{\textbf{M}}dx.\label{eq:weak}
\end{eqnarray}
By \eqref{eq:Hol} and \eqref{eq:weak}, it is immediate from \eqref{eq:strg-str} that $||\Hvec_{\Mvec_k}||_{L^2\left(\mathbb R^3; \mathbb R^3\right)}\rightarrow ||\Hvec_{\Mvec}||_{L^2\left(\mathbb R^3; \mathbb R^3\right)}$ as $k\rightarrow\infty$.
\end{proof}
Now we complete the lower-semicontinuity argument as another requirement for the application of the direct method in the calculus of variations.
\begin{proposition}\label{prop:wls}
Let $\Omega\subset\mathbb R^3$ be a bounded Lipschitz domain. Then the liminf inequality holds:
\begin{eqnarray}\label{wls1}
\mathcal{E}^{\varepsilon}_B\left(\Qvec, \Mvec\right)\le \liminf_{k\rightarrow\infty} \mathcal{E}^{\varepsilon}_B\left(\Qvec_k, \Mvec_k\right),
\end{eqnarray}whenever $\left(\Qvec_k, \Mvec_k\right)\rightharpoonup \left(\Qvec, \Mvec\right) \mbox{ weakly in } W^{1,2}\left(\Omega; \mathbb{R}^{3\times 3}\right)\times W^{1,2}\left(\Omega; \mathbb R^3\right) \mbox{ as } k\to\infty.$
\end{proposition}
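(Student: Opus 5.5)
We split $\mathcal{E}^{\varepsilon}_B$ into three groups and treat each with a different tool: the gradient terms, the local polynomial terms in $(\Qvec,\Mvec)$ together with the $\Hvec_{ext}$ terms, and the nonlocal terms involving $\Hvec_{\Mvec}$. Throughout we pass freely to a subsequence realizing the $\liminf$, so it suffices to prove the inequality along that subsequence. We may assume the $\liminf$ is finite, so the energies are uniformly bounded along the subsequence.

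\textbf{Gradient terms.} The Dirichlet parts $\int_\Omega \frac{K_1}{2}|\nabla\Qvec|^2+\frac{K_2}{2}|\nabla\Mvec|^2$ are convex and strongly continuous on $L^2$, hence weakly lower semicontinuous. Since $\nabla\Qvec_k\rightharpoonup\nabla\Qvec$ and $\nabla\Mvec_k\rightharpoonup\nabla\Mvec$ weakly in $L^2$, we obtain $\liminf$ inequalities for these terms.

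\textbf{Local terms.} By Rellich--Kondrachov in three dimensions, $W^{1,2}\hookrightarrow L^p$ compactly for $p<6$. Hence $\Qvec_k\to\Qvec$ and $\Mvec_k\to\Mvec$ strongly in $L^4$, and, after a further subsequence, a.e. in $\Omega$. This strong $L^4$ convergence gives actual convergence, not just semicontinuity, of all terms in $f_B$:
\begin{itemize}
\item the polynomials of degree at most four in $\Qvec$ and in $\Mvec$;
\item the coupling $\Qvec\Mvec\cdot\Mvec$, via $|\Qvec_k\Mvec_k\cdot\Mvec_k-\Qvec\Mvec\cdot\Mvec|\le|\Qvec_k-\Qvec||\Mvec_k|^2+|\Qvec||\Mvec_k-\Mvec|(|\Mvec_k|+|\Mvec|)$ and H\"older with exponents $(3,3/2)$ or $(4,2,4)$.
\end{itemize}
The terms $-\mu_0\Mvec\cdot\Hvec_{ext}$ and $-\frac{\chi_1\mu_0}{2}\Qvec\Hvec_{ext}\cdot\Hvec_{ext}$ also converge, because $\Hvec_{ext}$ is bounded on $\overline\Omega$.

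\textbf{Stray-field terms.} This is the main obstacle: we must control $\frac{\mu_0}{2}|\Hvec_{\Mvec}|^2+\frac{\varepsilon\mu_0}{2}|\Hvec_{\Mvec}|^4-\frac{\chi_1\mu_0}{2}\Qvec\Hvec_{\Mvec}\cdot\Hvec_{\Mvec}$. Since $\chi_\Omega\Mvec_k\to\chi_\Omega\Mvec$ strongly in $L^2(\mathbb R^3)$, Proposition~\ref{lem-str} gives $\Hvec_{\Mvec_k}\rightharpoonup\Hvec_{\Mvec}$ weakly in $L^2$ with convergence of norms. Hence $\Hvec_{\Mvec_k}\to\Hvec_{\Mvec}$ strongly in $L^2(\mathbb R^3)$, and the quadratic term converges.

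The uniform energy bound, combined with the coercivity in \eqref{lb:5} (with $\epsilon_2<\varepsilon/\gamma$), gives $\sup_k\|\Hvec_{\Mvec_k}\|_{L^4(\Omega)}<\infty$. Strong $L^2$ convergence identifies the weak $L^4$ limit as $\Hvec_{\Mvec}$. Interpolating between $L^2$ and the $L^4$ bound then yields $\Hvec_{\Mvec_k}\to\Hvec_{\Mvec}$ strongly in $L^p(\Omega)$ for every $p<4$.

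For the quartic term we use convexity: $\int_\Omega|\Hvec|^4$ is weakly lower semicontinuous in $L^4$, so it satisfies the $\liminf$ inequality.

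For the coupling $\Qvec_k\Hvec_{\Mvec_k}\cdot\Hvec_{\Mvec_k}$ we split
\[
\Qvec_k\Hvec_k\cdot\Hvec_k-\Qvec\Hvec\cdot\Hvec=(\Qvec_k-\Qvec)\Hvec_k\cdot\Hvec_k+\Qvec(\Hvec_k-\Hvec)\cdot(\Hvec_k+\Hvec).
\]
The first piece is bounded by $\|\Qvec_k-\Qvec\|_{L^2}\|\Hvec_k\|_{L^4}^2\to0$. The second is bounded by $\|\Qvec\|_{L^{6}}\|\Hvec_k-\Hvec\|_{L^{12/5}}\|\Hvec_k+\Hvec\|_{L^{12/5}}$. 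Since $12/5<4$, the strong $L^{12/5}$ convergence above sends this to zero. Thus the coupling converges, whatever the sign of $\chi_1$.

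\textbf{Conclusion.} Summing the convergent terms and the lower semicontinuous ones (gradients and quartic stray-field term) gives \eqref{wls1}. The delicate points are the upgrade from weak to strong convergence of $\Hvec_{\Mvec_k}$, which comes from Proposition~\ref{lem-str}, and the use of the $\varepsilon$-penalty to get the $L^4$ bound needed for the indefinite $\Qvec$--$\Hvec_{\Mvec}$ coupling.
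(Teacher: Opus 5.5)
Your proof is correct, but it takes a different route from the paper's. The paper treats the gradient terms by weak lower semicontinuity, as you do. It then handles everything else at once: it puts all non-gradient terms, including those with $\Hvec_{\Mvec}$, into one density $\mathscr{G}$, asserts $\mathscr{G}\ge 0$ ``by \eqref{lb:5}'', and applies Fatou's lemma using only the a.e.\ convergence of $\Qvec_k,\Mvec_k$. The term $\|\Hvec_{\Mvec}\|_{L^2}^2$ is treated only by weak lower semicontinuity from Proposition~\ref{lem-str}.

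You instead prove genuine convergence of every term except the gradients and the quartic penalty. You use strong $L^4$ convergence for $f_B$ and the $\Hvec_{ext}$ terms, and the norm-convergence part of Proposition~\ref{lem-str} to upgrade $\Hvec_{\Mvec_k}\to\Hvec_{\Mvec}$ to strong $L^2$ convergence. The $\varepsilon$-penalty in \eqref{lb:5} gives a uniform $L^4(\Omega)$ bound, and H\"older handles the indefinite coupling $\Qvec\Hvec_{\Mvec}\cdot\Hvec_{\Mvec}$.

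The Fatou route is shorter, but as written it has two gaps. First, it needs pointwise convergence of $\Hvec_{\Mvec_k}$, which weak $L^2$ convergence does not give. Second, it needs a lower bound for an integrand from which the penalty $\frac{\varepsilon\mu_0}{2}|\Hvec_{\Mvec}|^4$ has been dropped, and that penalty is the only term able to absorb $-\frac{\chi_1\mu_0}{2}\Qvec\Hvec_{\Mvec}\cdot\Hvec_{\Mvec}$. Your strong $L^2$ convergence is exactly what repairs this. It yields a.e.\ convergence on a subsequence, so Fatou can be applied to the full non-gradient density including the penalty, which \eqref{lb:5} bounds below by $-C(1+|\Hvec_{ext}|^4)$.

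Your version gives more than the paper's:
\begin{itemize}
\item actual convergence of the stray-field and coupling terms;
\item no sign assumption on $\chi_1$;
\item the correct range $p<6$ in Rellich--Kondrachov (the paper claims all $q<\infty$, which fails in three dimensions).
\end{itemize}

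Two small simplifications are possible. The interpolation step is unnecessary, since $\|\Qvec\|_{L^4}\|\Hvec_{\Mvec_k}-\Hvec_{\Mvec}\|_{L^2}\|\Hvec_{\Mvec_k}+\Hvec_{\Mvec}\|_{L^4}\to 0$ already handles the second piece of your splitting. Also, the constraint coming from $\mathcal{I}_2$ should read $\epsilon_2<\varepsilon/|\chi_1|$; the $\gamma$ there is a typo in the paper.
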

\begin{proof}
Set 
\begin{eqnarray}
\mathscr{G}\left(\Qvec,\Mvec\right)
=f_B\left(\Qvec, \Mvec\right) + \frac{\mu_0}{2}|\Hvec_{\Mvec}|^2 - \mu_0\Mvec\cdot \Hvec_{ext}
- \frac{\mu_0\gamma}{2}\Qvec\Mvec\cdot\Mvec\nonumber\\ - \frac{\chi_1\mu_0}{2}\Qvec\Hvec_{\Mvec}\cdot\Hvec_{\Mvec} - \frac{\chi_1\mu_0}{2}\Qvec\Hvec_{ext}\cdot\Hvec_{ext}.\nonumber
\end{eqnarray} 
We assume that $\mathscr{G}\left(\Qvec,\Mvec\right)$ is non-negative by the estimate \eqref{lb:5}.
Thanks to the lower semicontinuity property of $L^2$-norm, we have the following liminf inequality:
\begin{eqnarray}
||\nabla \Qvec||^2_{L^2\left(\Omega\right)} \le  \liminf_{k\rightarrow\infty}||\nabla \Qvec_k||^2_{L^2\left(\Omega\right)},\label{eq:ex9}\\
||\nabla \Mvec||^2_{L^2\left(\Omega\right)}\le  \liminf_{k\rightarrow\infty}||\nabla \Mvec_k||^2_{L^2\left(\Omega\right)}. \label{eq:ex10}
\end{eqnarray}
Thanks to the Rellich-Kondra\v{s}ov's compact embedding (see e.g., \cite[Theorem 6.1-5]{ciarlet2021mathematical}), it holds   
\begin{alignat}{5}
&\Qvec_k& &\rightarrow\Qvec &~\mbox{ strongly in }& L^q\left(\Omega;\mathbb R^{3\times 3}\right) &\quad q\in [1, \infty)&,\\
&\Mvec_k& &\rightarrow\Mvec &~\mbox{ strongly in }& L^q\left(\Omega;\mathbb R^3\right) &\quad q\in [1, \infty)&.
\end{alignat}\label{eq:ex12}
Up to a subsequence (without reindexing), it follows that
\begin{alignat}{5}
&\Qvec_k& &\rightarrow \Qvec& \mbox{ a.e. in } \Omega,\label{eq:ex13}\\
&\Mvec_k& &\rightarrow \Mvec& \mbox{ a.e. in } \Omega.\label{eq:ex14}
\end{alignat}
Using the closure property of pointwise convergences \eqref{eq:ex13} and \eqref{eq:ex14} and then applying Fatou's lemma (e.g.\ \cite{Rindler}) to $\mathscr{G}\left(\Qvec,\Mvec\right)$, it holds that
\begin{eqnarray}
\int_{\Omega}\mathscr{G}\left(\Qvec,\Mvec\right)dx\le\liminf_{k\rightarrow\infty}\int_{\Omega}\mathscr{G}\left(\Qvec_k,\Mvec_k\right)\d x.\label{eq:ex16}
\end{eqnarray} 
Moreover, thanks to Proposition \ref{lem-str}, it holds that
\begin{eqnarray}
\Hvec_{\Mvec_k}\rightharpoonup\Hvec_{\Mvec} \mbox{ weakly in } L^2\left(\mathbb R^3; \mathbb R^3\right).
\end{eqnarray}
Thanks to the lower-semicontinuity property of $L^2$-norm, we have
\begin{eqnarray}
||\Hvec_{\Mvec}||^2_{L^2\left(\mathbb R^3;~\mathbb R^3\right)}\le  \liminf_{k\rightarrow\infty}||\Hvec_{\Mvec_k}||^2_{L^2\left(\mathbb R^3;~\mathbb R^3\right)}. \label{eq:ex17}
\end{eqnarray}
Assembling \eqref{eq:ex17} and \eqref{eq:ex16}, we infer the claim \eqref{wls1}.
\end{proof}
To this end, we state and prove the existence of energy minimizers for the penalized energy functional (cf. \eqref{penalized_energy}).
\begin{theorem}\label{existence_in_limit}
Let $\Omega\subset\mathbb R^3$ be a bounded domain such that $\partial\Omega$ is Lipschitz and $\mathcal{Q}\times\mathcal{M}\ne\emptyset$. Also, we consider that $\Hvec_{ext}\in C\left(\overline{\Omega}; \mathbb R^3\right)$. Then the penalized energy functional $\mathcal{E}^{\varepsilon}_B\left(\Qvec, \Mvec\right)$ (cf. \eqref{penalized_energy}) has a minimum on the space $\mathcal{Q}_B\times\mathcal{M}_B$.  
\end{theorem}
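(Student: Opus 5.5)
The plan is the direct method in the calculus of variations, assembling the three ingredients already prepared: the lower bound \eqref{lb:5}, the compactness statement Proposition \ref{prop:comp}, and the lower semicontinuity statement Proposition \ref{prop:wls}, with Proposition \ref{lem-str} handling the stray field.

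\textbf{Step 1: finite infimum and a minimizing sequence.} Set $m_\varepsilon=\inf_{\mathcal{Q}_B\times\mathcal{M}_B}\mathcal{E}^{\varepsilon}_B$. Since $\Omega$ is bounded and $\Hvec_{ext}\in C(\overline\Omega;\mathbb R^3)$, the term $\int_\Omega|\Hvec_{ext}|^4$ is finite. With $\epsilon_1,\epsilon_2$ chosen as after \eqref{lb:5}, the integrand of $\mathcal{E}^{\varepsilon}_B$, minus the gradient terms, is bounded below pointwise by a polynomial in $|\Qvec|,|\Mvec|$ with positive quartic leading coefficients. Such a polynomial is bounded below by a constant. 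Hence $m_\varepsilon>-\infty$. Nonemptiness of the admissible class (the hypothesis, or the remark citing \cite{bethuel1994ginzburg}) gives $m_\varepsilon<\infty$. Pick $(\Qvec_k,\Mvec_k)$ with $\mathcal{E}^{\varepsilon}_B(\Qvec_k,\Mvec_k)\to m_\varepsilon$, so that $\mathcal{E}^{\varepsilon}_B(\Qvec_k,\Mvec_k)\le\Upsilon:=m_\varepsilon+1$ for large $k$.

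\textbf{Step 2: compactness and admissibility of the limit.} Proposition \ref{prop:comp} gives a subsequence with $\Qvec_k\rightharpoonup\Qvec$ and $\Mvec_k\rightharpoonup\Mvec$ weakly in $W^{1,2}$, together with strong $L^2$ convergence. I will also verify the uniform $W^{1,2}$ bound, including the $L^2$ parts. The gradient bounds come from Step 1, since the remaining integrand is bounded below. The bound on $\|\Qvec_k\|_{L^2}$ then follows from the Poincar\'e inequality applied to $\Qvec_k-\tilde\Qvec$, where $\tilde\Qvec$ is a fixed extension of the boundary data; the same argument applies to $\Mvec_k$. The limit stays admissible because the affine set $\{\Qvec:\Qvec|_{\partial\Omega}=\Qvec_{bd}\}$ is convex and strongly closed in $W^{1,2}$, hence weakly closed; the trace convergence \eqref{eq:ex7}--\eqref{eq:ex8} gives the same conclusion. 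The constraints of symmetry and zero trace are linear and pass to the limit. So $(\Qvec,\Mvec)\in\mathcal{Q}_B\times\mathcal{M}_B$.

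\textbf{Step 3: lower semicontinuity and conclusion.} Extending by zero, $\chi_\Omega\Mvec_k\to\chi_\Omega\Mvec$ in $L^2(\mathbb R^3)$. Proposition \ref{lem-str} then gives $\Hvec_{\Mvec_k}\rightharpoonup\Hvec_\Mvec$ in $L^2(\mathbb R^3)$, and Proposition \ref{prop:wls} yields $\mathcal{E}^{\varepsilon}_B(\Qvec,\Mvec)\le\liminf_k\mathcal{E}^{\varepsilon}_B(\Qvec_k,\Mvec_k)=m_\varepsilon$. Hence $(\Qvec,\Mvec)$ is a minimizer.

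\textbf{Main obstacle.} The delicate point is the lower semicontinuity of the terms containing $\Hvec_\Mvec$, namely $-\frac{\chi_1\mu_0}{2}\Qvec\Hvec_\Mvec\cdot\Hvec_\Mvec$ and $\frac{\varepsilon\mu_0}{2}|\Hvec_\Mvec|^4$, because $\Hvec_{\Mvec_k}$ converges only weakly, so Fatou's lemma cannot be applied to them directly. My first attempt is to upgrade to strong convergence $\Hvec_{\Mvec_k}\to\Hvec_\Mvec$ in $L^2$: weak convergence plus the norm convergence in Proposition \ref{lem-str} gives this. A subsequence then converges a.e., and Fatou's lemma can be applied to the nonnegative combined integrand from \eqref{lb:5}, after adding the integrable constant and the $|\Hvec_{ext}|^4$ term. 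This gives lower semicontinuity of all the $\Hvec$-dependent terms together.
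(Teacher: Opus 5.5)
Your proposal is correct and takes the paper's approach. The paper proves this theorem in one line by the direct method, invoking Propositions~\ref{prop:comp} and~\ref{prop:wls}; your Steps 1--3 fill that in, and your upgrade of $\Hvec_{\Mvec_k}\to\Hvec_{\Mvec}$ to strong $L^2$ convergence, so that Fatou's lemma applies to the whole $\Hvec$-dependent integrand including $\frac{\varepsilon\mu_0}{2}|\Hvec_{\Mvec}|^4$, supplies a justification that the paper's proof of Proposition~\ref{prop:wls} leaves implicit. The one detail you pass over (as does the paper) is that $m_\varepsilon<\infty$ needs an admissible $\Mvec$ with $\int_\Omega|\Hvec_{\Mvec}|^4<\infty$; this holds because $\chi_\Omega\Mvec\in L^2\cap L^6$ and $\chi_\Omega\Mvec\mapsto\Hvec_{\Mvec}$ is a Calder\'on--Zygmund operator, hence bounded on $L^4$.
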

\begin{proof}
Applying Proposition \ref{prop:comp} and Proposition \ref{prop:wls}, we conclude \Cref{existence_in_limit} due to the direct method in the calculus of variations (see e.g.\ \cite{Rindler}). 
\end{proof}
In the following, we show that the sequence of minimizers of the penalized energy functional \eqref{penalized_energy} indeed converges to a minimizer of the actual ferronematic energy \eqref{bulk_energy}.
\begin{proposition}\label{energy_bound}
Let $\Omega\subset\mathbb R^3$ be a bounded domain such that $\partial\Omega$ is Lipschitz. Assume that $\left(\Qvec^{\varepsilon}, \Mvec^{\varepsilon}\right)$ be a sequence of minimizer of the functional $\mathcal{E}^{\varepsilon}_B$. Then, up to subsequences 
\begin{eqnarray}
\Qvec^{\varepsilon}\rightharpoonup\Tilde{\Qvec} &\mbox{ weakly in }& W^{1,2}\left(\Omega; \mathbb R^{3\times 3}\right),\\
\Mvec^{\varepsilon}\rightharpoonup\Tilde{\Mvec} &\mbox{ weakly in }& W^{1,2}\left(\Omega; \mathbb R^3\right).
\end{eqnarray}
Moreover, it can be shown that $\left(\Tilde{\Qvec}, \Tilde{\Mvec}\right)$ is a minimizer of the energy functional $\mathcal{E}_B\left(\Qvec, \Mvec\right)$.
\end{proposition}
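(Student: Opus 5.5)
The plan is the standard penalization argument: obtain bounds uniform in $\varepsilon$, extract weak limits, and pass to the limit by combining the liminf inequality with a recovery comparison against arbitrary competitors.

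\textbf{Step 1: bounds uniform in $\varepsilon$.} Fix a competitor $(\Qvec_0,\Mvec_0)\in\mathcal{Q}_B\times\mathcal{M}_B$ with $\Hvec_{\Mvec_0}\in L^4$. For instance, take $\Mvec_0$ smooth up to the boundary, so that $\Hvec_{\Mvec_0}\in L^4$ by elliptic regularity for the potential problem. Minimality gives
$$\mathcal{E}^{\varepsilon}_B(\Qvec^\varepsilon,\Mvec^\varepsilon)\le \mathcal{E}^{\varepsilon}_B(\Qvec_0,\Mvec_0)\le \mathcal{E}_B(\Qvec_0,\Mvec_0)+\tfrac{\mu_0}{2}\|\Hvec_{\Mvec_0}\|^4_{L^4},$$
and the right-hand side is bounded for $\varepsilon\le 1$.

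For the matching lower bound, I would \emph{not} reuse \eqref{lb:5} as stated, since its choice $\epsilon_2<\varepsilon/\gamma$ degenerates as $\varepsilon\to0$. Instead I would control the term $-\tfrac{\chi_1\mu_0}{2}\Qvec\Hvec_{\Mvec}\cdot\Hvec_{\Mvec}$ using the quartic growth of $f_B$ in $\Qvec$ together with $\tfrac{\mu_0}{2}|\Hvec_\Mvec|^2$, via the pointwise estimate $|\Qvec\Hvec\cdot\Hvec|\le|\Qvec||\Hvec|^2$. This is the main obstacle. The coupling is cubic, whereas the available coercive pieces are $|\Qvec|^4$ and $|\Hvec|^2$ (plus $\varepsilon|\Hvec|^4$), so a pointwise absorption is not possible in general.

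What I would try first is to bound $\|\Qvec\|_{L^\infty}$ a priori. By a truncation/maximum-principle argument, replacing $\Qvec^\varepsilon$ by its projection onto a ball of radius $R\ge\|\Qvec_{bd}\|_\infty$ lowers the LdG part and reduces the coupling only by a controlled amount. If that fails, I would assume smallness of $\chi_1$ relative to the data, namely $\chi_1 R<1$, so that $\tfrac{\mu_0}{2}(1-\chi_1|\Qvec|)|\Hvec_\Mvec|^2\ge c|\Hvec_\Mvec|^2$. Either route yields
$$\sup_\varepsilon\Big(\|\Qvec^\varepsilon\|_{W^{1,2}}+\|\Mvec^\varepsilon\|_{W^{1,2}}+\|\Hvec_{\Mvec^\varepsilon}\|_{L^2(\mathbb R^3)}+\varepsilon\|\Hvec_{\Mvec^\varepsilon}\|^4_{L^4}\Big)<\infty,$$
with the $W^{1,2}$ bounds coming from the Dirichlet terms and Poincaré with fixed traces.

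\textbf{Step 2: compactness.} Exactly as in Proposition \ref{prop:comp}, Banach–Alaoglu and Rellich give weak $W^{1,2}$ and strong $L^q$ ($q<6$) convergence to $(\Tilde\Qvec,\Tilde\Mvec)$, and the traces are preserved, so the limit lies in $\mathcal{Q}_B\times\mathcal{M}_B$. Proposition \ref{lem-str} then gives $\Hvec_{\Mvec^\varepsilon}\to\Hvec_{\Tilde\Mvec}$ strongly in $L^2$, since weak convergence together with convergence of norms gives strong convergence. Combined with the strong convergence of $\Qvec^\varepsilon$ and the uniform $L^\infty$ or smallness control from Step 1, the coupling $\int\Qvec^\varepsilon\Hvec_{\Mvec^\varepsilon}\cdot\Hvec_{\Mvec^\varepsilon}$ passes to the limit, or at least satisfies the correct liminf inequality. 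The remaining lower-order terms converge by strong $L^4$ convergence of $\Qvec^\varepsilon$ and $\Mvec^\varepsilon$, and the gradient terms are weakly lower semicontinuous. Dropping the nonnegative term $\tfrac{\varepsilon\mu_0}{2}\|\Hvec\|_{L^4}^4$, and noting that \eqref{penalized_energy} contains the $\Qvec\Mvec\cdot\Mvec$ coupling once fewer than \eqref{bulk_energy} (I would align the two by including it in the penalized functional), we get
$$\mathcal{E}_B(\Tilde\Qvec,\Tilde\Mvec)\le\liminf_{\varepsilon\to0}\mathcal{E}^\varepsilon_B(\Qvec^\varepsilon,\Mvec^\varepsilon).$$

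\textbf{Step 3: minimality.} Take any $(\Qvec,\Mvec)$ with $\mathcal{E}_B(\Qvec,\Mvec)<\infty$. If $\Hvec_\Mvec\in L^4$, then
$$\mathcal{E}^\varepsilon_B(\Qvec^\varepsilon,\Mvec^\varepsilon)\le\mathcal{E}_B(\Qvec,\Mvec)+\tfrac{\varepsilon\mu_0}{2}\|\Hvec_\Mvec\|^4_{L^4}\to\mathcal{E}_B(\Qvec,\Mvec).$$
In general, I would approximate $\Mvec$ in $W^{1,2}$ by smooth $\Mvec_j$ with the same trace; then $\Hvec_{\Mvec_j}\in L^4$ and $\mathcal{E}_B(\Qvec,\Mvec_j)\to\mathcal{E}_B(\Qvec,\Mvec)$ by the continuity of $\Mvec\mapsto\Hvec_\Mvec$ in $L^2$ and the same control of the coupling term. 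A diagonal argument then gives $\mathcal{E}_B(\Tilde\Qvec,\Tilde\Mvec)\le\mathcal{E}_B(\Qvec,\Mvec)$, which proves that $(\Tilde\Qvec,\Tilde\Mvec)$ is a minimizer.
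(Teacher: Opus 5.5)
Your overall structure (uniform bounds, compactness, liminf, comparison with an arbitrary competitor) is sound. You are also right that \eqref{lb:5} cannot be reused: its choice $\epsilon_2<\varepsilon/\gamma$ makes the negative $|\Qvec|^2$ coefficient of order $1/\varepsilon$, and this is exactly the estimate the paper's proof relies on for its uniform $W^{1,2}$ bound. However, your replacement does not work, so Step 1 has a genuine gap, and Step 2 inherits it.

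Projecting $\Qvec$ radially onto $\{|\Qvec|\le R\}$ can change the nematic--stray field coupling by as much as $\frac{\chi_1\mu_0}{2}\int_\Omega(|\Qvec|-R)_+|\Hvec_{\Mvec}|^2$. Along a ray $s\mapsto sQ_0$, the energy density has derivative $\alpha_3s^3+O(s^2)-\frac{\chi_1\mu_0}{2}Q_0\Hvec_\Mvec\cdot\Hvec_\Mvec-\frac{\mu_0\gamma}{2}Q_0\Mvec\cdot\Mvec-\dots$, which is positive only where $s^3\gtrsim|\Hvec_\Mvec|^2+|\Mvec|^2$. Nothing in your argument controls the nonlocal field $\Hvec_\Mvec$ pointwise or in any $L^p$ with $p>2$. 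So the truncation may raise the energy, and it produces no $R$.

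The fallback $\chi_1R<1$ adds a hypothesis that is not in the statement. It also still presupposes $|\Qvec|\le R$ pointwise, so it is not an independent route. Step 2 fails for the same reason: strong $L^2$ convergence of $\Hvec_{\Mvec^\varepsilon}$ does not let you pass to the limit in $\int_\Omega\Qvec^\varepsilon\Hvec_{\Mvec^\varepsilon}\cdot\Hvec_{\Mvec^\varepsilon}$ when $\Qvec^\varepsilon$ is only bounded in $L^6$.

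The missing ingredient is an $L^p$ bound on the stray field. By \eqref{25}, $\mu_0\Hvec_\Mvec$ is minus the $L^2$-projection of $\chi_\Omega\Mvec$ onto gradient fields. This is the Fourier multiplier $\xi\otimes\xi/|\xi|^2$, a matrix of products of Riesz transforms. By Calder\'on--Zygmund theory, $\|\Hvec_\Mvec\|_{L^p(\mathbb R^3)}\le C_p\|\Mvec\|_{L^p(\Omega)}$ for $1<p<\infty$. With $p=4$,
\[
\Big|\int_\Omega\Qvec\Hvec_\Mvec\cdot\Hvec_\Mvec\Big|\le\|\Qvec\|_{L^2(\Omega)}\|\Hvec_\Mvec\|_{L^4}^2\le C\|\Qvec\|_{L^4(\Omega)}\|\Mvec\|_{L^4(\Omega)}^2\le\delta\big(\|\Qvec\|_{L^4}^4+\|\Mvec\|_{L^4}^4\big)+C_\delta .
\]
This cubic term is therefore absorbed by the quartic terms of $f_B$, after dropping the nonnegative penalty. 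That gives $\varepsilon$-independent coercivity with no smallness or $L^\infty$ assumption.

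The same estimate does the rest of the work:
\begin{itemize}
\item It turns $\Mvec^\varepsilon\to\tilde\Mvec$ in $L^4(\Omega)$ into $\Hvec_{\Mvec^\varepsilon}\to\Hvec_{\tilde\Mvec}$ in $L^4(\mathbb R^3)$, so the coupling converges in Step 2.
\item It shows $\Hvec_\Mvec\in L^4$ for every $\Mvec\in W^{1,2}(\Omega)$, so the smooth approximation in Step 3 is unnecessary.
\item In fact, the direct method then applies to $\mathcal{E}_B$ without any penalization.
\end{itemize}
With this in place your liminf-plus-competitor argument is complete. That includes your alignment of the $\Qvec\Mvec\cdot\Mvec$ terms, without which the $\varepsilon\to0$ limit is not $\mathcal{E}_B$.

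The paper takes a different route here. It derives strong convergence of the gradients by energy comparison, after asserting a $W^{1,2}(\mathbb R^3)$ bound and compact $L^4(\mathbb R^3)$ convergence for $\Hvec_{\Mvec^\varepsilon}$. That bound is not available: the normal component of $\Hvec_\Mvec$ jumps by $\Mvec\cdot\nu/\mu_0$ across $\partial\Omega$. So the paper's proof does not supply the missing estimate either.
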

\begin{proof}
Assume by \Cref{existence_in_limit} that $\left(\Qvec^{\varepsilon}, \Mvec^{\varepsilon}\right)$ is a sequence of minimizers of the penalised energy functional $\mathcal{E}^{\varepsilon}_B\left(\Qvec, \Mvec\right)$. By the property of the minimum energy level, we have
\begin{eqnarray}
\mathcal{E}^{\varepsilon}_B\left(\Qvec^{\varepsilon}, \Mvec^{\varepsilon}\right)\le \mathcal{E}^{\varepsilon}_B\left(\Qvec, \Mvec\right)<\infty
\end{eqnarray}
for any $\left(\Qvec, \Mvec\right)\in\mathcal{Q}_B\times\mathcal{M}_B$. Using \eqref{lb:1}, \eqref{lb:2}, \eqref{lb:3}, \eqref{lb:4} and \eqref{lb:5} in \eqref{penalized_energy}, we deduce that
\begin{eqnarray}
||\Qvec^{\varepsilon}||_{W^{1,2}\left(\Omega; \mathbb R^{3\times 3}\right)}\le C_1 \mbox{ and } ||\Mvec^{\varepsilon}||_{W^{1,2}\left(\Omega; \mathbb R^3\right)}\le C_2\label{eq:eb1}
\end{eqnarray}
for any $\varepsilon>0$.
It then follows from \eqref{eq:eb1}, up to a subsequence (without reindexing), that 
\begin{eqnarray}
\Qvec^{\varepsilon}\rightharpoonup\Tilde{\Qvec} &\mbox{ weakly in }& W^{1,2}\left(\Omega; \mathbb R^{3\times 3}\right),\label{eq:eb2}\\
\Mvec^{\varepsilon}\rightharpoonup\Tilde{\Mvec} &\mbox{ weakly in }& W^{1,2}\left(\Omega; \mathbb R^{3}\right)\label{eq:eb3}.
\end{eqnarray}
By Rellich-Kondra\v{s}ov compact embedding $W^{1,2}\hookrightarrow L^6$ (see e.g.\ \cite{ciarlet2021mathematical}), we have
\begin{eqnarray}
\Qvec^{\varepsilon}\rightarrow\Tilde{\Qvec} &\mbox{ strongly in }& L^{6}\left(\Omega; \mathbb R^{3\times 3}\right)\label{eq:eb4},\\
\Mvec^{\varepsilon}\rightarrow\Tilde{\Mvec} &\mbox{ strongly in }& L^{6}\left(\Omega; \mathbb R^{3}\right)\label{eq:eb5}.
\end{eqnarray}
As $\Mvec^{\varepsilon}\in W^{1,2}$ is a minimizer of $\mathcal{E}^{\varepsilon}_B$, the sequence of stray field $\Hvec_{\Mvec^{\varepsilon}}$ appears as a weak solution of \eqref{25} corresponding to $\Mvec^{\varepsilon}$. Applying the elliptic regularity (cf. \cite{gilbarg1998elliptic}), there exists $C_3>0$ such that the following estimate holds
\begin{eqnarray}
||\Hvec_{\Mvec^{\varepsilon}}||_{W^{1,2}\left(\mathbb R^3; \mathbb R^3\right)}\le C_3\left(||\nabla\cdot\Mvec^{\varepsilon}||_{L^2\left(\Omega\right)}+||\Phi^{\varepsilon}||_{L^2}\right)<\infty
\end{eqnarray}
for any $\varepsilon>0$. The last implication follows due to \eqref{eq:eb1} and the uniform bound in the weak solution $\Hvec_{\Mvec^{\varepsilon}}$.
Therefore, up to a subsequence
\begin{eqnarray}
\Hvec_{\Mvec^{\varepsilon}}\rightharpoonup\Hvec_{\Tilde{\Mvec}} \mbox{ weakly in } W^{1,2}\left(\mathbb R^3; \mathbb R^3\right).
\end{eqnarray}
Again, thanks to the Rellich-Kondra\v{s}ov compact embedding $W^{1,2}\hookrightarrow L^4$, 
we possess
\begin{eqnarray}
\Hvec_{\Mvec^{\varepsilon}}\rightarrow\Hvec_{\Tilde{\Mvec}} \mbox{ strongly in } L^4\left(\mathbb R^3; \mathbb R^3\right).\label{eq:str_L4}
\end{eqnarray}
From \eqref{eq:str_L4}, it follows that
\begin{eqnarray}
\varepsilon||\Hvec_{\Mvec^{\varepsilon}}||_{L^4\left(\mathbb R^3;~\mathbb R^3\right)}\rightarrow 0 \mbox{ as } \varepsilon\rightarrow 0.\label{Str-eq1}
\end{eqnarray}
To conclude the theorem, we need to obtain strong convergence in $\nabla\Qvec^{\varepsilon}$ and $\nabla\Mvec^{\varepsilon}$, while other energy terms can be treated via already obtained $L^6$-convergence in $\Qvec^{\varepsilon}$ and $\Mvec^{\varepsilon}$. It is therefore the task to show that
\begin{eqnarray}
\nabla\Qvec^{\varepsilon}\rightarrow\nabla\Tilde{\Qvec} \mbox{ strongly in } L^2\left(\Omega; \mathbb R^{3\times 3}\right),\label{str-Q}\\
\nabla\Mvec^{\varepsilon}\rightarrow\nabla\Tilde{\Mvec} \mbox{ strongly in } L^2\left(\Omega; \mathbb R^3\right)\label{str-M}.
\end{eqnarray}
To this aim, we compare $\mathcal{E}^{\varepsilon}\left(\Qvec^{\varepsilon}, \Mvec^{\varepsilon}\right)$ to $\mathcal{E}^{\varepsilon}\left(\Tilde{\Qvec}, \Tilde{\Mvec}\right)$, yields
\begin{eqnarray}
& &\int_{\Omega} \biggl[\frac{K_1}{2}|\nabla \Qvec^{\varepsilon}|^2 + \frac{K_2}{2}|\nabla \Mvec^{\varepsilon}|^2 + f_B\left(\Qvec^{\varepsilon}, \Mvec^{\varepsilon}\right) + \frac{\mu_0}{2}|\Hvec_{\Mvec^{\varepsilon}}|^2 - \mu_0\Mvec^{\varepsilon}\cdot \Hvec_{ext}\nonumber\\
& &~~~~~~~- \frac{\chi_1\mu_0}{2}\Qvec^{\varepsilon}\Hvec_{\Mvec^{\varepsilon}}\cdot\Hvec_{\Mvec^{\varepsilon}} - \frac{\chi_1\mu_0}{2}\Qvec^{\varepsilon}\Hvec_{ext}\cdot\Hvec_{ext}+\frac{\varepsilon\mu_0}{2}|\Hvec_{\Mvec^{\varepsilon}}|^4\biggr]\nonumber\\
&\le&\int_{\Omega} \biggl[\frac{K_1}{2}|\nabla \Tilde{\Qvec}|^2 + \frac{K_2}{2}|\nabla \Tilde{\Mvec}|^2 + f_B\left(\Tilde{\Qvec}, \Tilde{\Mvec}\right) + \frac{\mu_0}{2}|\Hvec_{\Tilde{\Mvec}}|^2 - \mu_0\Tilde{\Mvec}\cdot \Hvec_{ext}\nonumber\\
& &~~~~~~~- \frac{\chi_1\mu_0}{2}\Tilde{\Qvec}\Hvec_{\Tilde{\Mvec}}\cdot\Hvec_{\Tilde{\Mvec}} - \frac{\chi_1\mu_0}{2}\Tilde{\Qvec}\Hvec_{ext}\cdot\Hvec_{ext}+\frac{\varepsilon\mu_0}{2}|\Hvec_{\Tilde{\Mvec}}|^4\biggr].\;\label{en-comp1}
\end{eqnarray}
Using \eqref{eq:eb4}, \eqref{eq:eb5}, \eqref{Str-eq1} in \eqref{en-comp1}, we deduce that
\begin{eqnarray}
& &\int_{\Omega} \frac{K_1}{2}|\nabla \Qvec^{\varepsilon}|^2 + \frac{K_2}{2}|\nabla \Mvec^{\varepsilon}|^2\nonumber\\
&\le& \int_{\Omega} \frac{K_1}{2}|\nabla \Tilde{\Qvec}|^2 + \frac{K_2}{2}|\nabla \Tilde{\Mvec}|^2 + o\left(\varepsilon\right).\label{en-comp2}
\end{eqnarray}
From \eqref{eq:eb2} and \eqref{eq:eb3}, we write
\begin{eqnarray}
\nabla\Mvec^{\varepsilon}=\nabla\Tilde{\Mvec} + A^{\varepsilon} \mbox{ where } A^{\varepsilon}\rightharpoonup 0 \mbox{ weakly in } L^2\left(\Omega; \mathbb R^{3\times 3}\right),\label{eq:M1}\\
\nabla\Qvec^{\varepsilon}=\nabla\Tilde{\Qvec} + B^{\varepsilon} \mbox{ where } B^{\varepsilon}\rightharpoonup 0 \mbox{ weakly in } L^2\left(\Omega; \mathbb R^{3\times 3}\right)\label{eq:M2}.
\end{eqnarray}
Using \eqref{eq:M1} and \eqref{eq:M2} in \eqref{en-comp2}, it follows that
\begin{eqnarray}
& &\int_{\Omega} \frac{K_1}{2}\left(|\nabla\Tilde{\Qvec}|^2+2\nabla\Tilde{\Qvec}\cdot A^{\varepsilon} + |A^{\epsilon}|^2\right) + \frac{K_2}{2}\left(|\nabla\Tilde{\Mvec}|^2+2\nabla\Tilde{\Mvec}\cdot B^{\varepsilon} + |B^{\epsilon}|^2\right)\nonumber\\
&\le& \int_{\Omega} \frac{K_1}{2}|\nabla \Tilde{\Qvec}|^2 + \frac{K_2}{2}|\nabla \Tilde{\Mvec}|^2 + o\left(\varepsilon\right).\label{en_comp3}
\end{eqnarray}
Since $A^{\varepsilon}\rightharpoonup 0 \mbox{ weakly in } L^2\left(\Omega; \mathbb R^3\right)$ and $B^{\varepsilon}\rightharpoonup 0 \mbox{ weakly in } L^2\left(\Omega; \mathbb R^{3\times 3}\right)$, by the definition of weak convergence, it holds that
\begin{eqnarray}
\int_{\Omega}A^{\varepsilon}\cdot\nabla\Tilde{\Mvec}=0\mbox{ and }\int_{\Omega}B^{\varepsilon}\cdot\nabla\Tilde{\Qvec}=0 \mbox{ as } \varepsilon\rightarrow 0.
\end{eqnarray}
We then conclude from \eqref{en_comp3} that
\begin{eqnarray}
\int_{\Omega}|A^{\varepsilon}|^2 \rightarrow 0 \mbox{ and }\int_{\Omega}|B^{\varepsilon}|^2 \rightarrow 0 \mbox{ as } \varepsilon\rightarrow 0.\label{eq:norm_conv}
\end{eqnarray}
Weak convergences \eqref{eq:M1} and \eqref{eq:M2} and convergence in norm \eqref{eq:norm_conv}, together, imply
\begin{eqnarray}
A^{\varepsilon}\rightarrow 0 \mbox{ and }B^{\varepsilon}\rightarrow 0 \mbox{ strongly in } L^2.
\end{eqnarray}
Therefore, the strong convergences \eqref{str-Q} and \eqref{str-M} follow.
Hence we conclude that 
\begin{eqnarray}
\lim_{\varepsilon\rightarrow 0}\mathcal{E}^{\varepsilon}_B\left(\Qvec^{\varepsilon}, \Mvec^{\varepsilon}\right)\rightarrow \mathcal{E}_B\left(\Tilde{\Qvec}, \Tilde{\Mvec}\right).
\end{eqnarray}
By comparing $\mathcal{E}^{\varepsilon}_B\left(\Qvec^{\varepsilon}, \Mvec^{\varepsilon}\right)$ with $\mathcal{E}^{\varepsilon}_B\left(\Qvec, \Mvec\right)$ for any $\left(\Qvec, \Mvec\right)\in\mathcal{A}_B\times\mathcal{S}_B$, we assert that $\left(\Tilde{\Qvec}, \Tilde{\Mvec}\right)$ is a minimizer of $\mathcal{E}_B\left(\Qvec, \Mvec\right)$.
\end{proof}
In what follows, we discuss the complete derivation of a two-dimensional ferronematic model obtained as a dimension-reduced version of the three-dimensional ferronematic energy (cf. \eqref{bulk_energy}). The technique is based on the weak convergence method in the dimension reduction (see e.g.\ \cite{braides2002gamma}). The two main aspects are the following: to unify the two regimes in the limit $\eta\rightarrow 0$, and to achieve sufficient regularities in the reduction of the magnetostatic term, so that it allows us to pass to the limit in the nematic-magnetostatic coupling term.
\section{Derivation of the limiting ferronematic energy}\label{derivation of the limiting ferronematic energy}
In this part, we present the derivation of a limiting ferronematic energy in a thin film setting. As already mentioned, the obtained reduced energy is already discussed in \cite{dutta2025study, dutta2026existence}, and we here present a complete explanation. To this aim, we let $\omega\subset\mathbb R^2$ be open and bounded. We assume a ferronematic thin film (cf. \Cref{fig:geo}), denoted by
$$\Omega_{\eta}\equiv\{\left(x_1, x_2\right)\in\omega, 0<x_3<\eta\},$$ where $\eta$ is a constant that measures the thickness of the thin film $\Omega_{\eta}$ and $\eta<<1$. We then rescale the domain $\Omega_{\eta}$ to $\Omega$. Here we assume that $\Omega$ is a cylinder whose height is of unit length, and $\omega$ is the cross-section as expressed below:
$$\Omega\equiv\{\left(y_1, y_2\right)\in\omega, 0<y_3<1\}.$$ We make a correspondance between $\Omega_{\eta}$ and $\Omega$ via an injective mapping $y: \Omega_{\eta}\rightarrow\Omega$ (cf. \Cref{fig:geo}) by
\begin{eqnarray}
y_1=x_1,~~y_2=x_2,~~y_3=\frac{1}{\eta}x_3.
\end{eqnarray}
The mapping $y$ corresponds each $\overline{\Mvec}:\Omega_{\eta}\rightarrow\mathbb R^3$ to $\Mvec:\Omega\rightarrow\mathbb R^3$ by $
\Mvec\left(y_1, y_2, y_3\right)=\overline{\Mvec}\left(x_1, x_2, x_3\right)$ forall $\left(x_1, x_2, x_3\right)\in\Omega_{\eta}$ and each $\overline{\Qvec}:\Omega_{\eta}\rightarrow\mathbb R^{3\times 3}$ to $\Qvec:\Omega\rightarrow\mathbb R^{3\times 3}$ by $\Qvec\left(y_1, y_2, y_3\right)=\overline{\Qvec}\left(x_1, x_2, x_3\right)$ forall $\left(x_1, x_2, x_3\right)\in\Omega_{\eta}$. In the same spirit, the corresponding magnetostatic scalar potential $\overline{\Phi}:\Omega_{\eta}\rightarrow\mathbb R$ is related to $\Phi:\Omega\rightarrow\mathbb R$ via
$\Phi\left(y_1, y_2, y_3\right)=\overline{\Phi}\left(x_1, x_2, x_3\right)$.
\begin{figure}
    \centering
    \includegraphics[width=0.45\linewidth]{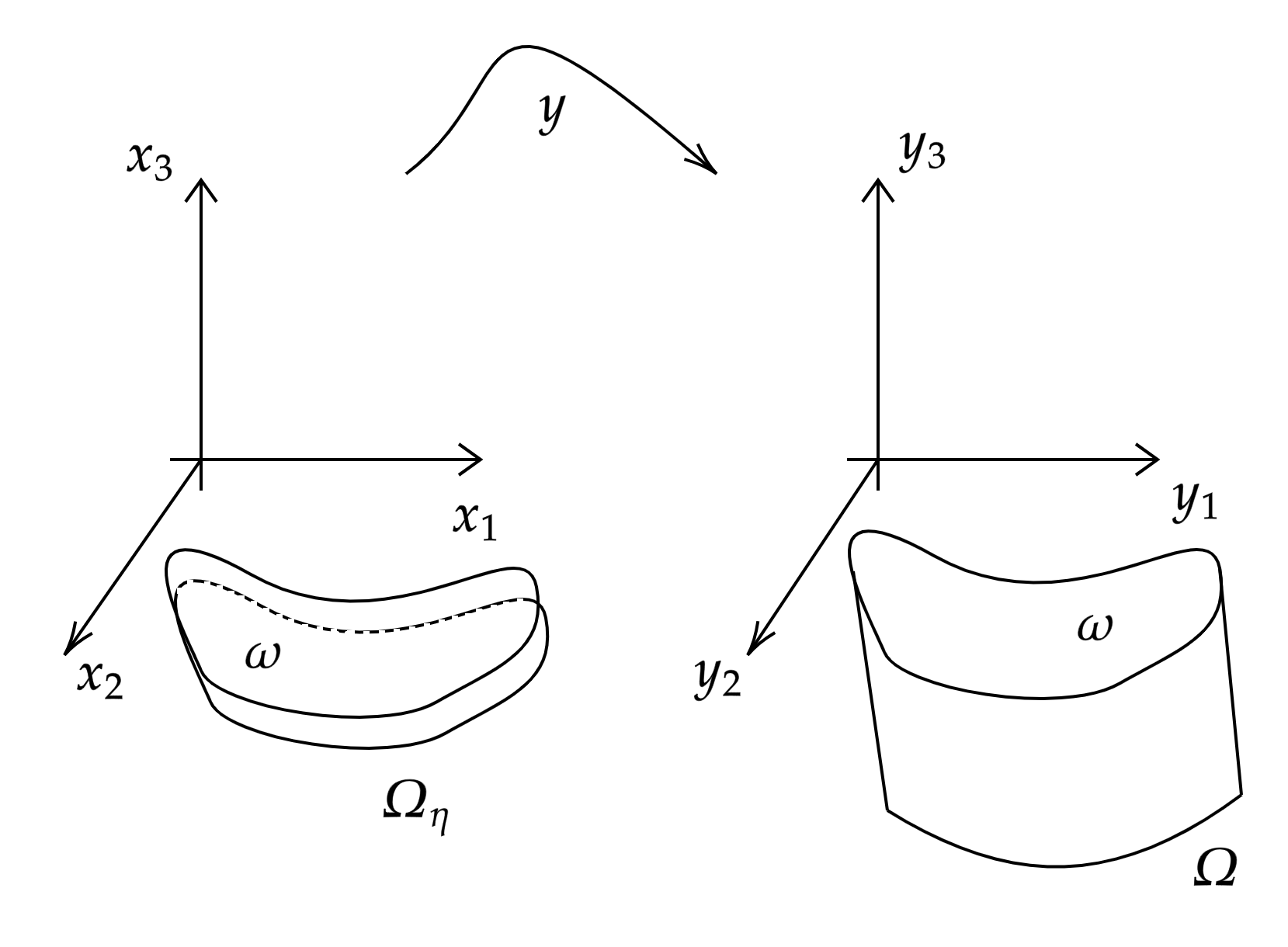}
    \caption{Physical sketch of a thin film $\Omega_{\eta}$ under the mapping $y$}
    \label{fig:geo}
\end{figure}
In addition, we assume the contribution due to the surface energy in this thin film regime (see, e.g.\ \cite{golovaty2015dimension}) as
\begin{eqnarray}
f_{sur}\left(\Qvec, \nu\right)=\beta_1\left(\Qvec\nu\cdot\nu\right)+\beta_2\Qvec\cdot\Qvec+\beta_3\left(\Qvec\nu\cdot\nu\right)^2+\beta_4|\Qvec\nu|^2,
\end{eqnarray}
where $\beta_1, \beta_2, \beta_3$ and $\beta_4$ are constants, and $\nu\in\mathbb S^2$ is the surface normal to the liquid crystal. Following \cite{golovaty2015dimension}, we consider a special regime of these constants and write the surface energy in the following form
\begin{eqnarray}
f_{sur}\left(\Qvec, \hat{x}_3\right)=\alpha[\left(\Qvec\hat{x}_3\cdot\hat{x}_3\right)-\beta]^2+\gamma|\left(\mathbb I - \hat{x}_3\otimes\hat{x}_3\right)\Qvec\hat{x}_3|^2.
\end{eqnarray}
In the same spirit of \cite{golovaty2015dimension}, we consider the asymptotic regime of the surface energy as
\begin{eqnarray}
f_{sur}\left(\Qvec, \hat{x}_3\right)=f^0_{sur}\left(\Qvec, \hat{x}_3\right)+\eta f^1_{sur}\left(\Qvec, \hat{x}_3\right),
\end{eqnarray}
where
\begin{eqnarray}
f^0_{sur}\left(\Qvec, \hat{x}_3\right)=\alpha'[\left(\Qvec\hat{x}_3\cdot\hat{x}_3\right)-\beta]^2+\gamma'|\left(\mathbb I - \hat{x}_3\otimes\hat{x}_3\right)\Qvec\hat{x}_3|^2,\newline\\
f^1_{sur}\left(\Qvec, \hat{x}_3\right)=\alpha''[\left(\Qvec\hat{x}_3\cdot\hat{x}_3\right)-\beta]^2+\gamma''|\left(\mathbb I - \hat{x}_3\otimes\hat{x}_3\right)\Qvec\hat{x}_3|^2.
\end{eqnarray}
Also, we assume that $\gamma$ is independent of $\eta$, and for this purpose $\gamma''=0$. It can then be observed that the surface energy $f_{sur}\left(\Qvec, \hat{x}_3\right)$ will be minimized when the nematic order parameter $\Qvec$ satisfies $\Qvec\hat{x}_3=\beta\hat{x}_3$. It is not difficult to observe that $\Qvec$ will then be independent of the third direction.
\begin{remark}
In the above, we presume that the surface energy term is minimized in a regime where the thin film limit has a special nematic structure, i.e., the nematic order parameter $\Qvec$ is described by the two independent variables. We refer to \cite{golovaty2014minimizers} for this description. Our focus here is to ensure the limit passages in the dimension reduction process due to the involvement of the stray field or magnetostatic energy, $\int_{\mathbb R^3}\frac{\mu_0}{2}|\Hvec_{\Mvec}|^2$ and the nematic-magnetostatic coupling energy $-\int_{\mathbb R^3}\frac{\chi_1\mu_0}{2}\Qvec\Hvec_{\Mvec}\cdot\Hvec_{\Mvec}$.
\end{remark}
Using the correspondences due to the transformation $y$, the ferronematic energy $\mathcal{\overline{E}}_{\eta}\left(\overline{\Qvec}, \overline{\Mvec}\right)$ over $\Omega_{\eta}$ can be related to the ferronematic energy $\mathcal{E}_{\eta}\left(\Qvec, \Mvec\right)$ over $\Omega$ as follows
\begin{eqnarray}
& &\mathcal{\overline{E}}_{\eta}\left(\overline{\Qvec}, \overline{\Mvec}\right)\nonumber\\
&=&\int_{\Omega_{\eta}}\biggl[\frac{K_1}{2}|\nabla\overline{\Qvec}|^2+\frac{K_2}{2}|\nabla\overline{\Mvec}|^2+f_B\left(\overline{\Qvec}, \overline{\Mvec}\right)+\frac{\mu_0}{2}|\nabla\overline{\Phi}|^2- \frac{\chi_1\mu_0}{2}\overline{\Qvec}\nabla\overline{\Phi}\cdot\nabla\overline{\Phi}\nonumber\\
& &\nonumber\\
& &~~~-\mu_0\overline{\Mvec}\cdot\overline{\Hvec_{ext}} - \frac{\chi_1\mu_0}{2}\overline{\Qvec}\overline{\Hvec_{ext}}\cdot\overline{\Hvec_{ext}}+f_{sur}\left(\overline{\Qvec}, \hat{x}_3\right)\biggr]\nonumber\\
&=&\int_{\Omega}\frac{K_1}{2}\biggl[\left(|\nabla_p\Qvec|^2+\frac{1}{\eta^2}|\Qvec_{,3}|^2\right)+\frac{K_2}{2}\left(|\nabla_p\Mvec|^2+\frac{1}{\eta^2}|\Mvec_{,3}|^2\right)+f_B\left(\Qvec, \Mvec\right)\nonumber\\
& &+\frac{\mu_0}{2}\left(|\nabla_p\Phi|^2+\frac{1}{\eta^2}|\Phi_{,3}|^2\right)- \frac{\chi_1\mu_0}{2}\Qvec\left(\nabla_p\Phi+\frac{1}{\eta}\Phi_{,3}e_3\right)\cdot\left(\nabla_p\Phi+\frac{1}{\eta}\Phi_{,3}e_3\right)\nonumber\\
& & -\mu_0\Mvec\cdot\Hvec_{ext} - \frac{\chi_1\mu_0}{2}\Qvec\Hvec_{ext}\cdot\Hvec_{ext}+\frac{1}{\eta}f_{sur}\left(\Qvec, \hat{x}_3\right)\biggr]\nonumber\\
&:=& \mathcal{E}_{\eta}\left(\Qvec, \Mvec\right),\label{eq:energy_reduced}
\end{eqnarray}
which is subjected to the transformed stationary Maxwell's equation
\begin{eqnarray}
\nabla_p\cdot\left(-\nabla_p\Phi+\Mvec_p\right)+\frac{1}{\eta}\left(-\frac{1}{\eta}\Phi_{,3}+M_3\right)_{,3}=0 \mbox{ on } \mathbb R^3.\label{eq:reduced_maxwell}
\end{eqnarray}
In the same spirit, we correspond the transformed stationary Maxwell's equation (cf. \eqref{eq:reduced_maxwell}) to the following variational problem
\begin{eqnarray}
\min_{\Phi\in \mathcal{V}}\frac{\mu_0}{2}\int_{\mathbb R^3}\bigg|\nabla_p\Phi-\Mvec_p\bigg|^2 + \bigg|\frac{1}{\eta}\Phi_{,3}-M_3\bigg|^2,\label{stray_minimization}
\end{eqnarray}
where
\begin{eqnarray}
\mathcal{V}=\bigg\{\phi:\mathbb R^3\rightarrow\mathbb R, \nabla\phi\in L^2\left(\mathbb R^3; \mathbb R^3\right), \int_{\mathcal{B}}\phi=0\bigg\}
\end{eqnarray}
and $p=1,2$.
In this setting, we define the admissible space as follows:
\begin{eqnarray}
\mathcal{Q}_{\eta}=\bigg\{\Qvec\in W^{1,2}\left(\Omega_{\eta}; \mathcal{Q}\right):\Qvec|_{\partial\omega\times z}= g_1~~\forall z\in\left(0, \eta\right)\bigg\}\nonumber\newline\\
\mathcal{M}_{\eta}=\bigg\{\Qvec\in W^{1,2}\left(\Omega_{\eta}; \mathbb R^3\right):\Mvec|_{\partial\omega\times z}= g_2~~\forall z\in\left(0, \eta\right)\bigg\}\nonumber\newline\\
\mathcal{Q}_{\eta}\times\mathcal{M}_{\eta}=\bigg\{\left(\Qvec, \Mvec\right): \Qvec\in\mathcal{Q}_{\eta}, \Mvec\in\mathcal{M}_{\eta}\bigg\},
\end{eqnarray}
where $g_1, g_2$ are prescribed such a way that $\mathcal{Q}_{\eta}\times\mathcal{M}_{\eta}$ is nonempty.
\begin{remark}
We note that the assumption that the admissible space is nonempty is essential in this case, while the admissible space defined on a bulk setting is nonempty by an example \cite{bethuel1994ginzburg}.
\end{remark}
In the following, we explain the dimension reduction of the bulk energy; we first tackle the magnetostatic part of the energy and then present the reduction of state variables. 
\subsection{Limiting magnetostatic energy}
To derive the limiting ferronematic energy in the thin-film setting from the non-local ferronematic energy in bulk (cf. \eqref{bulk_energy}), we first obtain the limiting magnetostatic energy, following \cite{Giogia}. We then obtain the improved convergence with the higher regularity, which allows us to pass to the limit in the nematic-stray field coupling term.\newline\\
To this aim, we first state the following existence theorem. Given the space $\mathcal{A}_{\eta}\times\mathcal{S}_{\eta}$ is nonempty, the proof of the theorem follows straightforwardly.
\begin{theorem}\label{th:minimizer}
Let $\Omega_{\eta}\subset\mathbb R^3$ be a bounded domain such that $\mathcal{Q}_{\eta}\times\mathcal{M}_{\eta}\ne\emptyset$. Moreover, we assume that $\Hvec_{ext}\in C\left(\overline{\Omega}_{\eta}; \mathbb R^3\right)$. Then the functional $\mathcal{E}_{\eta}\left(\Qvec, \Mvec\right)$ (cf. \eqref{bulk_energy}) has a minimum on the space $\mathcal{Q}_{\eta}\times\mathcal{M}_{\eta}$.
\end{theorem}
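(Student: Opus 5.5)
The plan is to mirror the proof of \Cref{thm:1}, now for the rescaled functional $\mathcal{E}_{\eta}$ in \eqref{eq:energy_reduced} at fixed $\eta>0$, with the additional surface term $\frac{1}{\eta}f_{sur}$ and the anisotropic gradients. Since $\eta$ is fixed, the factors $1/\eta^2$ only change constants: $\int_\Omega |\nabla_p\Qvec|^2+\eta^{-2}|\Qvec_{,3}|^2 \ge \min(1,\eta^{-2})\|\nabla\Qvec\|^2_{L^2}$, and likewise for $\Mvec$ and $\Phi$. The surface density $f_{sur}=f^0_{sur}+\eta f^1_{sur}$ is a sum of squares with nonnegative coefficients, which I will assume, as in \cite{golovaty2015dimension}. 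It is therefore nonnegative and continuous in $\Qvec$, and it can only help the lower bound and behaves well under a.e.\ convergence.

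\textbf{Step 1 (penalization and lower bound).} As in \eqref{penalized_energy}, I first add $\frac{\varepsilon\mu_0}{2}|\nabla\Phi|^4$, with the rescaled gradient, to obtain $\mathcal{E}^{\varepsilon}_{\eta}$. The pointwise Young estimates \eqref{lb:1}--\eqref{lb:4} apply verbatim, with $\Hvec_{\Mvec}$ replaced by $\nabla_p\Phi+\frac1\eta\Phi_{,3}e_3$. Choosing $\epsilon_1,\epsilon_2$ as after \eqref{lb:5}, the quartic terms in $|\Qvec|$, $|\Mvec|$ and $|\nabla\Phi|$ dominate. Together with $f_{sur}\ge0$ and $\Hvec_{ext}\in C(\overline{\Omega}_\eta)$, which makes $\int|\Hvec_{ext}|^4$ finite, this shows $\mathcal{E}^{\varepsilon}_{\eta}$ is bounded below and coercive in $W^{1,2}$. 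The boundary condition on the lateral boundary $\partial\omega\times(0,\eta)$ then yields a Poincaré-type control of $\|\Qvec\|_{L^2}$ and $\|\Mvec\|_{L^2}$.

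\textbf{Step 2 (compactness and lower semicontinuity).} I will apply \Cref{prop:comp} to get weak $W^{1,2}$ limits, and Rellich to get strong $L^q$ convergence for $q<6$ and a.e.\ convergence. The lateral trace condition is preserved because the trace operator is weakly continuous and the admissible class is convex and closed.

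For the stray field I will use \eqref{stray_minimization}. By Lax--Milgram in the space $\mathcal{V}$ there is a unique potential $\Phi_{\Mvec}$, exactly as in \Cref{lem-str}. The strong $L^2$ convergence of $\chi_\Omega\Mvec_k$ gives weak convergence and convergence of norms of the rescaled gradients $(\nabla_p\Phi_k,\eta^{-1}\Phi_{k,3})$. Hence these gradients converge strongly in $L^2$, since weak convergence plus convergence of norms in a Hilbert space gives strong convergence.

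Lower semicontinuity then follows as in \Cref{prop:wls}: the quadratic gradient terms are weakly lsc, and Fatou's lemma handles the remaining density, which is bounded below by Step 1. Passing to a further a.e.\ subsequence of $\nabla\Phi_k$ takes care of the coupling term $\Qvec\nabla\Phi\cdot\nabla\Phi$. The direct method then gives a minimizer of $\mathcal{E}^{\varepsilon}_{\eta}$, as in \Cref{existence_in_limit}.

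\textbf{Step 3 (removing the penalty).} As in \Cref{energy_bound}, I will take minimizers $(\Qvec^\varepsilon,\Mvec^\varepsilon)$ of $\mathcal{E}^{\varepsilon}_{\eta}$ and extract weak limits. Comparing their energy with that of the limit, and arguing through the splitting \eqref{eq:M1}--\eqref{eq:M2}, gives strong convergence of the gradients. This identifies the limit $(\Tilde{\Qvec},\Tilde{\Mvec})$ as a minimizer of $\mathcal{E}_{\eta}$.

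\textbf{Main obstacle.} The hard step is controlling the nematic--stray field coupling $-\frac{\chi_1\mu_0}{2}\Qvec\nabla\Phi\cdot\nabla\Phi$ once $\varepsilon\to0$. This term is cubic, and $|\Qvec|\,|\nabla\Phi|^2$ is not obviously controlled by $\frac{\mu_0}{2}|\nabla\Phi|^2$ together with the quartic potential in $\Qvec$. I would try to bound it by the $L^4$-integrability of $\nabla\Phi$, obtained from elliptic regularity: $\Mvec\in W^{1,2}$ gives $\nabla\cdot\Mvec\in L^2$, so $\nabla\Phi\in W^{1,2}_{loc}\subset L^6_{loc}$ with bounds in terms of $\|\Mvec\|_{W^{1,2}}$. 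This would make the coupling term continuous along sequences with strongly converging $\Qvec$ in $L^2$ and stray fields in $L^4$.

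If that fails, the fallback is to assume $\chi_1$ small relative to $\alpha_3$ and $\mu_0$, so that the coupling is absorbed via $|\Qvec|\,|\nabla\Phi|^2\le \delta|\Qvec|^2|\nabla\Phi|^2+\ldots$.
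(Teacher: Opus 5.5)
\textbf{Verdict: genuine gap in Step 3.} Your Steps 1--2 are fine for the penalized functional, with $\varepsilon$-dependent constants. Step 3 is where it breaks, and it breaks at exactly the obstacle you flag without resolving it. To extract weak limits of the penalized minimizers $(\Qvec^{\varepsilon},\Mvec^{\varepsilon})$ you need $W^{1,2}$ bounds uniform in $\varepsilon$, i.e.\ a lower bound for the \emph{unpenalized} $\mathcal{E}_{\eta}$. Write $\nabla_\eta\Phi=(\nabla_p\Phi,\eta^{-1}\Phi_{,3})$.

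\emph{Step 1 does not give the uniform bound.} Absorbing $|\Qvec|\,|\nabla_\eta\Phi|^2$ pointwise into $\frac{\varepsilon\mu_0}{2}|\nabla_\eta\Phi|^4$ costs a term $C\varepsilon^{-1}|\Qvec|^2$, which degenerates as $\varepsilon\to0$. The uniform bound claimed in Proposition~\ref{energy_bound} rests on the same $\varepsilon$-dependent estimate, so mirroring it does not help.

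\emph{Your remedy does not give it either.} An estimate $\|\nabla_\eta\Phi\|_{L^4}\le C\|\Mvec\|_{W^{1,2}}$ only yields $\int_\Omega|\Qvec|\,|\nabla_\eta\Phi|^2\le C\|\Qvec\|_{L^2}\|\Mvec\|^2_{W^{1,2}}$. This is cubic and cannot be absorbed by $\frac{K_2}{2}\|\nabla\Mvec\|^2_{L^2}$ and the quartic potentials: take $\|\Qvec\|_{L^2}$ large and $\Mvec$ rapidly oscillating with bounded amplitude. At best it gives continuity of the coupling, not coercivity.

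\emph{The regularity claim is also off.} $\nabla\cdot(\chi_\Omega\Mvec)$ contains the surface charge $\Mvec\cdot\nu$ on $\partial\Omega$ (here $\pm M_3$ on $y_3=0,1$). So the normal component of $\nabla_\eta\Phi$ jumps, $\nabla_\eta\Phi\notin W^{1,2}_{loc}(\mathbb R^3)$, and interior regularity gives nothing in $L^4$ up to $\partial\Omega$.

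\emph{The fallback fails.} No inequality $|\Qvec|\,|\Hvec|^2\le C(1+|\Hvec|^2+|\Qvec|^4)$ holds pointwise for any $C$, however small $\chi_1$ is. A term $\delta|\Qvec|^2|\nabla_\eta\Phi|^2$ has nothing in the energy to absorb it, and assuming small $\chi_1$ changes the hypotheses anyway.

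\textbf{The missing ingredient} is the $L^p$-boundedness of the stray-field operator. After the dilation $y_3=x_3/\eta$, the map $\Mvec\mapsto\nabla_\eta\Phi$ is the Fourier multiplier with symbol $\xi\otimes\xi/|\xi|^2$ (a product of Riesz transforms) applied to $\chi_\Omega\Mvec$. Hence $\|\nabla_\eta\Phi\|_{L^p(\mathbb R^3)}\le C_p\|\Mvec\|_{L^p(\Omega)}$ for $1<p<\infty$, with $C_p$ independent of $\eta$. With $p=4$,
\[
\int_\Omega|\Qvec|\,|\nabla_\eta\Phi|^2\le\|\Qvec\|_{L^2}\|\nabla_\eta\Phi\|^2_{L^4}\le C\|\Qvec\|_{L^2}\|\Mvec\|^2_{L^4}\le\delta\|\Mvec\|^4_{L^4}+C_\delta|\Omega|^{1/2}\|\Qvec\|^2_{L^4}.
\]
The right-hand side is absorbed by $\frac{\alpha_5}{4}\|\Mvec\|^4_{L^4}+\frac{\alpha_3}{4}\|\Qvec\|^4_{L^4}$ plus a constant, so $\mathcal{E}_\eta$ itself is bounded below and coercive on $W^{1,2}$.

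The same estimate also gives continuity. Rellich gives $\Mvec_k\to\Mvec$ in $L^4(\Omega)$, hence $\nabla_\eta\Phi_k\to\nabla_\eta\Phi$ in $L^4$. Together with $\Qvec_k\to\Qvec$ in $L^2$, the coupling term is continuous along weakly convergent sequences in $W^{1,2}$.

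With these two facts the direct method applies to $\mathcal{E}_\eta$ directly, which is the route of the paper's one-line proof, though the paper does not spell out either estimate. Your penalization and Step 3 then become unnecessary.
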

\begin{proof}
The proof follows by the direct method in the calculus of variations.
\end{proof}
In what follows, we derive the limiting magnetostatic energy exactly following Gioia and James approximation \cite{Giogia} in the regime of minimum surface energy of NLCs. To tackle also the nematic-magnetostatic coupling term, we obtain the better convergence of the scalar potential in Proposition \ref{improved_convergence}.
\begin{proposition}\label{prop:reduced_stray}
Let $\left(\Qvec_{\eta}, \Mvec_{\eta}\right)$ be a sequence of minimizers of the energy functional $\mathcal{E}_{\eta}\left(\Qvec, \Mvec\right)$. Also, let $\Mvec_{\eta}\rightarrow\check{\Mvec}$ in $L^2\left(\Omega; \mathbb R^3\right)$ and $\Phi_{\eta}$ be the corresponding magnetostatic potential, i.e., the solution of \eqref{eq:reduced_maxwell} related to $\Mvec_{\eta}$. Then the following convergences hold
\begin{eqnarray}
\nabla_p\Phi_{\eta}&\rightarrow& 0 \mbox{ strongly in } L^2\left(\mathbb R^3\right),\label{eq:reduced_stray1}\\
\frac{1}{\eta}{\Phi_{\eta}}_{,3}&\rightarrow& \check{M_3} \mbox{ strongly in } L^2\left(\mathbb R^3\right),\label{eq:reduced_stray2}\\
\frac{\mu_0}{2}\int_{\Omega}\left(|\nabla_p\Phi_{\eta}|^2+\frac{1}{\eta^2}|{\Phi_{\eta}}_{,3}|^2\right)&\rightarrow&\frac{\mu_0}{2}\int_{\Omega}\check{M_3}^2.\label{eq:str2'}
\end{eqnarray}
\end{proposition}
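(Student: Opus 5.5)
This is the Gioia--James thin-film argument. I would run it through the variational characterization \eqref{stray_minimization} of the rescaled Maxwell equation \eqref{eq:reduced_maxwell}. Write $\Hvec_\eta:=(\nabla_p\Phi_\eta,\frac1\eta{\Phi_\eta}_{,3})$ and extend $\Mvec_\eta$ by zero outside $\Omega$. The weak form of \eqref{eq:reduced_maxwell} reads
\begin{equation*}
\int_{\mathbb R^3}\Hvec_\eta\cdot\Big(\nabla_p\psi,\tfrac1\eta\psi_{,3}\Big)=\int_{\mathbb R^3}\chi_\Omega\Mvec_\eta\cdot\Big(\nabla_p\psi,\tfrac1\eta\psi_{,3}\Big)\quad\forall\psi\in\mathcal V .
\end{equation*}
\textbf{Step 1 (uniform bound).} Testing with $\psi=\Phi_\eta$ gives the energy identity $\|\Hvec_\eta\|^2_{L^2(\mathbb R^3)}=\int\chi_\Omega\Mvec_\eta\cdot\Hvec_\eta$. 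Cauchy--Schwarz then yields $\|\Hvec_\eta\|_{L^2}\le\|\Mvec_\eta\|_{L^2(\Omega)}$, which is bounded since $\Mvec_\eta\to\check\Mvec$ in $L^2$. So, up to a subsequence, $\nabla_p\Phi_\eta\rightharpoonup h_p$ and $\frac1\eta{\Phi_\eta}_{,3}\rightharpoonup h_3$ weakly in $L^2(\mathbb R^3)$.

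\textbf{Step 2 (identifying the weak limits).} First, $\|{\Phi_\eta}_{,3}\|_{L^2}\le C\eta\to0$. Since the mixed derivatives commute in the sense of distributions, $\partial_3(\nabla_p\Phi_\eta)=\nabla_p({\Phi_\eta}_{,3})\to0$ in $\mathcal D'$, so $\partial_3h_p=0$. Because $h_p\in L^2(\mathbb R^3)$ is independent of $x_3$ on all of $\mathbb R^3$, this forces $h_p=0$. To identify $h_3$, take test functions $\psi(x)=\eta\,\zeta(x)$ with $\zeta\in C_c^\infty$. The weak form becomes
\begin{equation*}
\eta\int\Hvec_\eta\cdot(\nabla_p\zeta,0)+\int\tfrac1\eta{\Phi_\eta}_{,3}\,\zeta_{,3}=\eta\int\chi_\Omega\Mvec_\eta\cdot(\nabla_p\zeta,0)+\int\chi_\Omega M_{\eta,3}\zeta_{,3}.
\end{equation*}
Letting $\eta\to0$ gives $\partial_3(h_3-\chi_\Omega\check M_3)=0$ in $\mathcal D'(\mathbb R^3)$, so $h_3-\chi_\Omega\check M_3$ is an $L^2(\mathbb R^3)$ function independent of $x_3$, hence $h_3=\chi_\Omega\check M_3$. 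This identification of both components as genuine weak limits is where I expect the most care to be needed, in particular the bookkeeping on the whole space $\mathbb R^3$ and the normalization in $\mathcal V$.

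\textbf{Step 3 (upgrading to strong convergence and energy convergence).} Pass to the limit in the energy identity. Since $\Mvec_\eta\to\check\Mvec$ strongly and $\Hvec_\eta\rightharpoonup(0,0,\chi_\Omega\check M_3)$ weakly, the right-hand side converges:
\begin{equation*}
\int\chi_\Omega\Mvec_\eta\cdot\Hvec_\eta\to\int_\Omega\check M_3^2 .
\end{equation*}
Hence $\|\Hvec_\eta\|^2_{L^2}\to\|(0,0,\chi_\Omega\check M_3)\|^2_{L^2}$. Weak convergence together with convergence of norms in the Hilbert space $L^2$ gives strong convergence, which proves \eqref{eq:reduced_stray1} and \eqref{eq:reduced_stray2}. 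Uniqueness of the limit shows the whole family converges, not just a subsequence. For \eqref{eq:str2'}, the integral over $\Omega$ is the restriction of the $L^2(\mathbb R^3)$ norm, which converges to $\frac{\mu_0}{2}\int_\Omega\check M_3^2$ because the limit field vanishes outside $\Omega$. The minimizing property of $(\Qvec_\eta,\Mvec_\eta)$ plays no role beyond guaranteeing that $\Mvec_\eta$ is an $L^2$ sequence with a limit.
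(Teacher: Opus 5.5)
Your proof is correct, but it follows a different route from the paper. The paper relies only on the minimality of $\Phi_\eta$ in \eqref{stray_minimization}. It obtains the a priori bound by comparing with $\Phi=0$, and shows $\nabla_p\Phi=0$ exactly as you do: an $x_3$-independent $L^2(\mathbb R^3)$ function must vanish. It then gets an upper bound from an explicit Gioia--James competitor $\Phi^{\epsilon,\kappa}=\eta\int_0^{y_3}\check M_3^{\epsilon}-\frac{\eta}{\kappa}\int_0^{y_3}\chi_{[1,1+\kappa]}\int_0^1\check M_3^{\epsilon}$. This competitor is built from a smooth approximation of $\check M_3$ plus a compensating layer of height $\kappa$ that brings the potential back to zero, so its gradient lies in $L^2(\mathbb R^3)$. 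The paper expands the squares and lets $\eta\to0$, then $\epsilon\to0$ and $\kappa\to\infty$. This gives at once $\zeta=\check M_3$ and the vanishing of the defects $(a_p)_\eta$ and $a_\eta$.

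You instead use the Euler--Lagrange equation twice. Testing with $\Phi_\eta$ gives the orthogonality identity $\|\Hvec_\eta\|^2=\int\chi_\Omega\Mvec_\eta\cdot\Hvec_\eta$, i.e.\ $\Hvec_\eta$ is the $L^2$-projection of $\chi_\Omega\Mvec_\eta$ onto rescaled gradients. Testing with $\eta\zeta$ identifies the vertical weak limit as $\chi_\Omega\check M_3$. Norm convergence then follows from strong-times-weak convergence in the identity.

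What your route buys:
\begin{itemize}
\item It is shorter and needs no density argument, no cutoff layer and no iterated limits.
\item It gives $\|\Hvec_\eta\|_{L^2}\le\|\Mvec_\eta\|_{L^2(\Omega)}$ directly.
\item It makes explicit that the limit on $\mathbb R^3$ is $\chi_\Omega\check M_3$, and that the whole family converges, not just a subsequence.
\end{itemize}
What the paper's route buys: it is a genuine recovery-sequence construction in the $\Gamma$-convergence spirit. It uses only minimality, not the linear equation, so it transfers to almost-minimizers or non-quadratic stray-field densities. It also supplies the limsup construction for the magnetostatic part of the reduced energy.

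The only bookkeeping point in your argument is that $\eta\zeta$ need not satisfy the normalization defining $\mathcal V$. Since the weak form involves only gradients, you can subtract its mean over $\mathcal B$ without changing anything.
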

\begin{proof}
Assume by \Cref{th:minimizer} that $\left(\Qvec_{\eta}, \Mvec_{\eta}\right)$ be a sequence of minimizers of the energy functional $\mathcal{E}_{\eta}\left(\Qvec, \Mvec\right)$. It therefore follows that 
\begin{eqnarray}
\mathcal{E}_{\eta}\left(\Qvec_{\eta}, \Mvec_{\eta}\right)\le\mathcal{E}_{\eta}\left(\Qvec, \Mvec\right)<\infty\label{eq:str1}
\end{eqnarray}
for any fixed $\left(\Qvec, \Mvec\right)\in \mathcal{A}_{\eta}\times \mathcal{S}_{\eta}$ such that $f^0_{sur}\left(\Qvec, \hat{x}_3\right)=0$ or $\Qvec\hat{x}_3=\beta\hat{x}_3$ and $\Mvec_{,3}=0$. Therefore, up to a subsequence (without re-indexing)
\begin{eqnarray}
\Qvec_{\eta}&\rightharpoonup&\check{\Qvec}\mbox{ weakly in } W^{1,2}\left(\Omega; \mathbb R^{3\times 3}\right),\label{eq:Q}\\
\Mvec_{\eta}&\rightharpoonup&\check{\Mvec}\mbox{ weakly in } W^{1,2}\left(\Omega; \mathbb R^{3}\right).\label{eq:M}
\end{eqnarray}
Thanks again to the Kondra\v{s}ov compact embedding $W^{1,2}\hookrightarrow L^2$ (e.g.\ \cite{ciarlet2021mathematical}), up to a subsequence (without re-indexing), we have
\begin{eqnarray}
\Qvec_{\eta}&\rightarrow&\check{\Qvec}\mbox{ strongly in } L^{2}\left(\Omega; \mathbb R^{3\times 3}\right),\label{eq:Q'}\\
\Mvec_{\eta}&\rightarrow&\check{\Mvec}\mbox{ strongly in } L^{2}\left(\Omega; \mathbb R^{3}\right).\label{eq:M'}
\end{eqnarray}
Since $\left(\Qvec_{\eta}, \Mvec_{\eta}\right)$ be a sequence of minimizers of the energy functional $\mathcal{E}_{\eta}\left(\Qvec, \Mvec\right)$, the corresponding sequence of scalar potential $\Phi_{\eta}$ are the minimizers of the variational problem \eqref{stray_minimization} on the space $V$. Therefore, a comparison of minimum potential related to $\Phi_{\eta}$ with the potential for $\Phi=0$, yields
\begin{eqnarray}
\frac{\mu_0}{2}\int_{\mathbb R^3}\bigg|\nabla_p\Phi_{\eta}-\left(\Mvec_p\right)_{\eta}\bigg|^2 + \bigg|\frac{1}{\eta}\Phi_{\eta,3}-\left(M_{3}\right)_{\eta}\bigg|^2\le C|\Omega|\label{eq:str2}
\end{eqnarray}
for some generic constant $C>0$. The estimate \eqref{eq:str2} follows by the fact that $||\Mvec_{\eta}||_{L^2}<\infty$ in $\eta>0$ due to the uniform bound of the energy functional $\mathcal{E}_{\eta}\left(\Qvec, \Mvec\right)<\infty$ in $\eta>0$.
As a consequence, we have the estimates
\begin{eqnarray}
||\nabla_p\Phi_{\eta}||_{L^2\left(\mathbb R^3\right)}\le C_1, \bigg|\bigg|\frac{1}{\eta}\Phi_{\eta,3}\bigg|\bigg|_{L^2\left(\mathbb R^3\right)}\le C_2\label{eq:str4}
\end{eqnarray}
in any $\eta>0$. Therefore, up to a subsequence (without re-indexing),
\begin{eqnarray}
\nabla_p\Phi_{\eta}&\rightharpoonup&\nabla_p\Phi \mbox{ weakly in } L^2\left(\mathbb R^3\right),\label{eq:str5}\\
\frac{1}{\eta}\Phi_{\eta,3}&\rightharpoonup& \zeta \mbox{ weakly in } L^2\left(\mathbb R^3\right).\label{eq:str6}
\end{eqnarray}
From \eqref{eq:str5} and \eqref{eq:str6}, we can write
\begin{eqnarray}
& &\nabla_p\Phi_{\eta}=\nabla_p\Phi+\left(a_p\right)_{\eta} \mbox{ such that } \left(a_p\right)_{\eta}\rightharpoonup 0 \mbox{ weakly in } L^2\label{eq:con_ap}\\
& &\frac{1}{\eta}\Phi_{\eta,3}=\zeta+a_{\eta}~~~~~~~~\mbox{ such that }~~~~ a_{\eta}\rightharpoonup 0~\mbox{ weakly in } L^2\label{eq:con_an}.
\end{eqnarray}
Moreover, \eqref{eq:str5} and \eqref{eq:str6} together, provides
\begin{eqnarray}
\nabla\Phi_{\eta}\rightharpoonup\nabla\Phi \mbox{ weakly in } L^2\left(\mathbb R^3\right)
\end{eqnarray}
along with $\Phi_{,3}=0 \mbox{ a.e. in } \mathbb R^3$. Applying Fubini's theorem, we can conclude that $\nabla\Phi_p=0$ a.e. in $\mathbb R^2$.
We will now show that
\begin{eqnarray}
\left(a_p\right)_{\eta}\rightarrow 0 \mbox{ and } a_{\eta}\rightarrow 0 \mbox{ strongly in } L^2\left(\mathbb R^3\right).\label{eq:a_p}
\end{eqnarray}
Since $L^2\left(\mathbb R^3\right)=\overline{C^{\infty}_{0}\left(\mathbb R^3\right)}$, we assume a sequence of test functions such that
\begin{eqnarray}
\check{\Mvec}^{\epsilon}\rightarrow\check{\Mvec} \mbox{ strongly in } L^2\left(\mathbb R^3\right) \mbox{ as } \epsilon\rightarrow 0.\label{eq:test_func}
\end{eqnarray}
To complete the proof, we construct the test functions of the scalar potential of the corresponding ${\check{M_3}}^{\epsilon}$. To this purpose, we can write
\begin{eqnarray}
\Phi^{\epsilon,\kappa}=\eta\int_{0}^{y_3}{\check{M_3}}^{\epsilon}\left(y_1, y_2, s\right)ds-\frac{\eta}{\kappa}\int_{0}^{y_3}\chi_{[1, 1+\kappa]}\left(s_1\right)ds_1\int_{0}^{1}{\check{M_3}}^{\epsilon}\left(y_1, y_2, s\right)ds + d^{\epsilon}.\label{eq:test_seq}
\end{eqnarray}
In the above, 
as before $\Phi_{\eta}$ is a sequence of solutions to the variational problem \eqref{stray_minimization}, yields
\begin{eqnarray}
& &\int_{\mathbb R^3}\bigg|\nabla_p\Phi_{\eta}-\left(\Mvec_p\right)_{\eta}\bigg|^2 + \bigg|\frac{1}{\eta}\Phi_{\eta,3}-\left(M_3\right)_{\eta}\bigg|^2\nonumber\newline\\
&\le&\int_{\mathbb R^3}\bigg|\nabla_p\Phi^{\epsilon,\kappa}_{\eta}-\left(\Mvec_p\right)_{\eta}\bigg|^2 + \bigg|\frac{1}{\eta}\Phi^{\epsilon,\kappa}_{\eta,3}-\left(M_3\right)_{\eta}\bigg|^2.\label{eq:inq_str}
\end{eqnarray}
Using \eqref{eq:test_seq} in the R.H.S of the above inequality (cf. \eqref{eq:inq_str}) and expanding the L.H.S, we obtain
\begin{eqnarray}
& &\int_{\mathbb R^3}\biggl[\left(\left(a_p\right)_{\eta}\right)^2-2\left(a_p\right)_{\eta}\cdot\left(\Mvec_p\right)_{\eta}+\left(\left(\Mvec_p\right)_{\eta}\right)^2+\left(\zeta-\left(M_3\right)_{\eta}\right)^2-2\left(\zeta-\left(M_3\right)_{\eta}\right)\cdot a_{\eta} + \left(a_{\eta}\right)^2\biggr]\nonumber\\
&\le&\int_{\mathbb R^3}\biggl[\bigg|\eta\nabla_p\int_{0}^{y_3}{\check{M_3}}^{\epsilon}\left(y_1, y_2, s\right)ds-\frac{\eta}{\kappa}\int_{0}^{y_3}\chi_{[1, 1+\kappa]}\left(s_1\right)ds_1\nabla_p\int_{0}^{1}{\check{M_3}}^{\epsilon}\left(y_1, y_2, s\right)ds-\left(\Mvec_p\right)_{\eta}\bigg|^2\nonumber\\
& &~~~~~~~~~~~~+\bigg|{\check{M_3}}^{\epsilon}\left(y_1, y_2, s\right)-\frac{1}{\kappa}\chi_{[1, 1+\kappa]}\left(s_1\right)ds_1\int_{0}^{1}{\check{M_3}}^{\epsilon}\left(y_1, y_2, s\right)ds-\left(M_3\right)_{\eta}\bigg|^2\biggr].\label{eq:inq_first}
\end{eqnarray}
We observe that the second term in the L.H.S of \eqref{eq:inq_str} can be treated as follows
\begin{eqnarray}
& &\int_{\mathbb R^3}\left(a_p\right)_{\eta}\cdot\left(\left(\Mvec_p\right)_{\eta}-\check{\Mvec_p}\right) + \left(a_p\right)_{\eta}\cdot\check{\Mvec_p}\nonumber\\
&\le&||\left(a_p\right)_{\eta}||_{L^2}||\left(\Mvec_p\right)_{\eta}-\check{\Mvec_p}||_{L^2}\rightarrow 0
\end{eqnarray}
as $\eta\rightarrow 0$. The last implication is asserted by $||\left(a_p\right)_{\eta}||_{L^2}<\infty$ in $\eta$ due to \eqref{eq:con_ap} and the convergence \eqref{eq:M'}.
Similarly the fifth term in the L.H.S  \eqref{eq:inq_str} can be dealt as
\begin{eqnarray}
& &\int_{\mathbb R^3}\left(\zeta-\left(M_3\right)_{\eta}\right)\cdot a_{\eta}\nonumber\\
&\le&\int_{\mathbb R^3}\zeta\cdot a_{\eta} + ||\left(M_3\right)_{\eta} - \check{M_3}||_{L^2}||a_{\eta}||_{L^2} - \int_{\mathbb R^3}\check{M_3}\cdot a_{\eta}\rightarrow 0
\end{eqnarray}
as $\eta\rightarrow 0$ and it follows by $\eqref{eq:con_an}$ and \eqref{eq:M'}. For any fixed $\epsilon$ and $\kappa$, the term in R.H.S of \eqref{eq:inq_first}
\begin{eqnarray}
\eta\nabla_p\int_{0}^{y_3}{\check{M_3}}^{\epsilon}\left(y_1, y_2, s\right)ds-\frac{\eta}{\kappa}\int_{0}^{y_3}\chi_{[1, 1+\kappa]}\left(s_1\right)ds_1\nabla_p\int_{0}^{1}{\check{M_3}}^{\epsilon}\left(y_1, y_2, s\right)ds\rightarrow 0
\end{eqnarray}
as $\eta\rightarrow 0$. Hence, the third term in L.H.S cancel out the first term in R.H.S. Therefore we finally have
\begin{eqnarray}
& &\limsup_{\eta\rightarrow 0}\int_{\mathbb R^3}\left(\left(a_p\right)_{\eta}\right)^2+\left(\zeta-\left(M_3\right)_{\eta}\right)^2 + \left(a_{\eta}\right)^2\nonumber\\
&\le&\int_{\mathbb R^3}\bigg|{\check{M_3}}^{\epsilon}\left(y_1, y_2, s\right)-\frac{1}{\kappa}\chi_{[1, 1+\kappa]}\left(s_1\right)ds_1\int_{0}^{1}{\check{M_3}}^{\epsilon}\left(y_1, y_2, s\right)ds-\check{M_3}\bigg|^2.\label{eq:f_inq}
\end{eqnarray}
An application of the triangle inequality to R.H.S of \eqref{eq:f_inq}, yields
\begin{eqnarray}
& &\limsup_{\eta\rightarrow 0}\int_{\mathbb R^3}\left(\left(a_p\right)_{\eta}\right)^2+\left(\zeta-\left(M_3\right)_{\eta}\right)^2 + \left(a_{\eta}\right)^2\nonumber\\
&\le&\int_{\mathbb R^3}\bigg|{\check{M_3}}^{\epsilon}-\check{M_3}\bigg|^2+\int_{\mathbb R^3}\bigg|\frac{1}{\kappa}\chi_{[1, 1+\kappa]}\left(s_1\right)ds_1\int_{0}^{1}{\check{M_3}}^{\epsilon}\left(y_1, y_2, s\right)ds\bigg|^2.\label{eq:f_inq2}
\end{eqnarray}
Here we note that $\int_{-\infty}^{+\infty}\chi^2_{\kappa, 1+\kappa}=\kappa$ and hence \eqref{eq:f_inq2} follows the inequality below:
\begin{eqnarray}
& &\limsup_{\eta\rightarrow 0}\int_{\mathbb R^3}\left(\left(a_p\right)_{\eta}\right)^2+\left(\zeta-\left(M_3\right)_{\eta}\right)^2 + \left(a_{\eta}\right)^2\nonumber\\
&\le&\int_{\mathbb R^3}\bigg|{\check{M_3}}^{\epsilon}-\check{M_3}\bigg|^2+\frac{1}{\kappa}\int_{\mathbb R^2}\bigg[\int_{0}^{1}{\check{M_3}}^{\epsilon}\left(y_1, y_2, s\right)ds\bigg]^2.
\end{eqnarray}
In the above, letting $\epsilon\rightarrow 0$ to reach the limit $\check{M_3}$ and $\kappa\rightarrow\infty$ to recover the estimate in the whole space $\mathbb R^3$, we have
\begin{eqnarray}
\limsup_{\eta\rightarrow 0}\int_{\mathbb R^3}\bigg[\left(\left(a_p\right)_{\eta}\right)^2+\left(\zeta-\left(M_3\right)_{\eta}\right)^2 + \left(a_{\eta}\right)^2\bigg]=0.
\end{eqnarray}
It then immediately follows, accounting for the previous weak converges \eqref{eq:a_p} and \eqref{eq:con_an}, that
\begin{eqnarray}
\left(a_p\right)_{\eta}\rightarrow 0 \mbox{ strongly in } L^2\left(\mathbb R^3\right),\\
a_\eta\rightarrow 0 \mbox{ strongly in } L^2\left(\mathbb R^3\right)
\end{eqnarray}
and $\zeta=\check{M_3}$. Hence the claims \eqref{eq:reduced_stray1}, \eqref{eq:reduced_stray2} follow, and as a consequence, the assertion \eqref{eq:str2'} holds.
\end{proof}
In the following, we improve the obtained convergences.
\begin{proposition}[Improved convergence]\label{improved_convergence}
Let $\left(\Qvec_{\eta}, \Mvec_{\eta}\right)$ be a sequence of minimizers of the functional $\mathcal{E}_{\eta}\left(\Qvec, \Mvec\right)$. Also, let $\Mvec_{\eta}\rightarrow\check{\Mvec}$ in $W^{1,2}\left(\Omega; \mathbb R^3\right)$ and $\Phi_{\eta}$ be the corresponding magnetostatic potential, i.e., the solution of \eqref{eq:reduced_maxwell} related to $\Mvec_{\eta}$. Then we have 
\begin{eqnarray}
\nabla_p\Phi_{\eta}\rightharpoonup 0 \mbox{ weakly in } W^{1,2}\left(\mathbb R^3\right),\label{eq:improved_con1}\\
\frac{1}{\eta}\Phi_{\eta,3}\rightharpoonup \zeta \mbox{ weakly in } W^{1,2}\left(\mathbb R^3\right).\label{eq:improved_con2}
\end{eqnarray}
\end{proposition}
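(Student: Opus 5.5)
The plan is to get uniform second-order bounds on $\Phi_\eta$ from the rescaled Maxwell equation \eqref{eq:reduced_maxwell}, using the extra regularity $\Mvec_\eta\to\check{\Mvec}$ in $W^{1,2}$. Weak compactness in $W^{1,2}$ then gives the two claims, and the limits are identified by the strong $L^2$ convergences of Proposition \ref{prop:reduced_stray}. Throughout I write $\Mvec_\eta$ for the zero extension $\chi_\Omega\Mvec_\eta$, as in the definition of the stray field.

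\emph{Tangential derivatives.} For $h\neq 0$ and $j=1,2$, let $D^h_j u(y)=\bigl(u(y+he_j)-u(y)\bigr)/h$. Since \eqref{eq:reduced_maxwell} has constant coefficients and is invariant under horizontal translations, $D^h_j\Phi_\eta$ minimizes \eqref{stray_minimization} with data $D^h_j\Mvec_\eta$. Comparing with the competitor $0$, exactly as in \eqref{eq:str2}, should give
\begin{equation*}
\|\nabla_p D^h_j\Phi_\eta\|_{L^2(\mathbb R^3)}+\Bigl\|\tfrac1\eta\,\partial_3 D^h_j\Phi_\eta\Bigr\|_{L^2(\mathbb R^3)}\le 2\,\|D^h_j\Mvec_\eta\|_{L^2(\mathbb R^3)} .
\end{equation*}
The right-hand side is bounded by $\|\nabla_p\Mvec_\eta\|_{L^2(\Omega)}$ only if the zero extension creates no jump across the lateral boundary $\partial\omega\times(0,1)$. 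I would handle this with the lateral condition $\Mvec_\eta=g_2$ from $\mathcal{M}_\eta$: either lift $g_2$ to a fixed $W^{1,2}$ field on $\mathbb R^3$ and treat it as a fixed source, or take $g_2=0$ to begin with. Letting $h\to 0$ then gives uniform $L^2$ bounds on $\nabla_p\nabla_p\Phi_\eta$ and on $\nabla_p\bigl(\tfrac1\eta\Phi_{\eta,3}\bigr)$.

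\emph{Normal derivatives.} Write $\eta$ times \eqref{eq:reduced_maxwell} in the form
\begin{equation*}
\partial_3\Bigl(\tfrac1\eta\Phi_{\eta,3}-(M_3)_\eta\Bigr)=\eta\Bigl(\nabla_p\cdot(\Mvec_p)_\eta-\Delta_p\Phi_\eta\Bigr).
\end{equation*}
By the first step the right-hand side is $O(\eta)$ in $L^2$. Hence $\partial_3\bigl(\tfrac1\eta\Phi_{\eta,3}\bigr)=\partial_3(M_3)_\eta+O(\eta)$. Also $\partial_3\nabla_p\Phi_\eta=\eta\,\nabla_p\bigl(\tfrac1\eta\Phi_{\eta,3}\bigr)=O(\eta)$, so this mixed derivative is also uniformly bounded. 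Together with the $L^2$ bounds \eqref{eq:str4}, this gives uniform $W^{1,2}$ bounds for $\nabla_p\Phi_\eta$ and $\tfrac1\eta\Phi_{\eta,3}$, provided $\partial_3(M_3)_\eta$ is uniformly bounded in $L^2$.

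\emph{Identifying the limits.} Extract weakly convergent subsequences in $W^{1,2}(\mathbb R^3)$. By \eqref{eq:reduced_stray1} and \eqref{eq:reduced_stray2}, the limits must be $0$ and $\zeta=\check{M_3}$. Since the limits are unique, the whole sequence converges, which gives \eqref{eq:improved_con1} and \eqref{eq:improved_con2}.

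\emph{Main obstacle.} The hard step is the normal derivative in the second step. The zero extension of $(M_3)_\eta$ jumps across the faces $y_3=0$ and $y_3=1$, so $\partial_3(M_3)_\eta$ contains surface measures and the identity above only yields a bound in $W^{1,2}(\Omega)$, i.e. inside the film, not in $W^{1,2}(\mathbb R^3)$. My first attempt is to carry out the second step separately in $\omega\times(0,1)$ and in the exterior regions $y_3<0$ and $y_3>1$, where the source vanishes. I would glue the pieces using the transmission conditions: $\tfrac1\eta\Phi_{\eta,3}-M_3$ and $\Phi_\eta$ are continuous across the faces. If the full-space statement is needed, I would add the surface compatibility $M_3=0$ on $\omega\times\{0,1\}$, or else state the result on $\Omega$ only, which is enough for passing to the limit in the coupling term $-\tfrac{\chi_1\mu_0}{2}\Qvec\Hvec_{\Mvec}\cdot\Hvec_{\Mvec}$.
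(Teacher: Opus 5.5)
The obstacle you flag in your last paragraph is not a technicality. \eqref{eq:improved_con2} is false whenever $\check M_3\not\equiv 0$, so neither gluing nor any other argument can establish it.

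The distributional divergence of $\chi_\Omega\Mvec_\eta$ carries face charges $\pm(M_3)_\eta$ on $\omega\times\{0,1\}$. Hence $\tfrac1\eta\Phi_{\eta,3}$ jumps by the trace of $(M_3)_\eta$ across the faces; this is your own transmission condition. Its $L^2$-limit $\zeta=\chi_\Omega\check M_3$ from \eqref{eq:reduced_stray2} can therefore lie in $W^{1,2}(\mathbb R^3)$ only if $\check M_3$ has zero trace on $\omega\times\{0,1\}$. But the energy bound $\|\tfrac1\eta\Mvec_{\eta,3}\|_{L^2}\le D_2$ forces $\partial_3\check M_3=0$. So that trace is $\check M_3$ itself, and \eqref{eq:improved_con2} can hold only if $\check M_3\equiv 0$.

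Your surface-compatibility fix runs into the same problem. Imposing $(M_3)_\eta=0$ on $\omega\times\{0\}$, together with $\|\partial_3(M_3)_\eta\|_{L^2(\Omega)}=O(\eta)$, gives $\|(M_3)_\eta\|_{L^2(\Omega)}=O(\eta)$. The statement then becomes trivial, and the term $\frac{\mu_0}{2}\int M_3^2$ drops out of the limit.

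The paper's proof reaches \eqref{eq:improved_con2} only because its global estimate \eqref{eq:improved_con3} omits exactly these charges. Its right-hand side is the divergence of $\Mvec_\eta$ inside $\Omega$, not of $\chi_\Omega\Mvec_\eta$. It also involves $\|\Phi_\eta\|_{L^2(\mathbb R^3)}$, which is not controlled for $\Phi_\eta\in\mathcal V$, and its constant is never shown to be uniform in $\eta$.

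Apart from this, your Steps 1--2 are sound when $g_2=0$, and more careful than the paper's one-line appeal to elliptic regularity. Because $\partial_3\nabla_p\Phi_\eta=\eta\,\nabla_p(\tfrac1\eta\Phi_{\eta,3})$ holds on all of $\mathbb R^3$, they bound $\nabla_p\Phi_\eta$ in $W^{1,2}(\mathbb R^3)$, which proves \eqref{eq:improved_con1}. They also bound $\tfrac1\eta\Phi_{\eta,3}$ in $W^{1,2}(\Omega)$, where in fact $\partial_3(\tfrac1\eta\Phi_{\eta,3})=O(\eta)$ in $L^2$. So your ``on $\Omega$ only'' variant is the correct salvageable form of the second claim.

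Your lifting option for $g_2\neq 0$ does not work. However you split $\Mvec_\eta$, the zero extension $\chi_\Omega\Mvec_\eta$ still jumps by $g_2$ across $\partial\omega\times(0,1)$. So $\|D^h_j(\chi_\Omega\Mvec_\eta)\|_{L^2}$ blows up like $|h|^{-1/2}$. For general $g_2$, the difference quotients only work after localizing horizontally to some $\omega'$ with $\overline{\omega'}\subset\omega$. If $g_2\cdot\nu\neq 0$, even \eqref{eq:improved_con1} fails on $\mathbb R^3$, since $\nabla_p\Phi_\eta\cdot\nu$ jumps by $g_2\cdot\nu$ there.

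Finally, with the $W^{1,2}(\Omega)$ version the coupling term converges to $-\frac{\chi_1\mu_0}{2}\int_\Omega\check Q_{33}\check M_3^2$. It does not converge to zero, as the paper later asserts.
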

\noindent Moreover, up to a subsequence, we have
\begin{eqnarray}
\nabla_p\Phi_{\eta}\rightarrow 0 \mbox{ strongly in } L^6\left(\mathbb R^3\right),\\
\frac{1}{\eta}\Phi_{\eta,3}\rightarrow \Tilde{M_3} \mbox{ strongly in } L^6\left(\mathbb R^3\right).  
\end{eqnarray}
\begin{proof}
Assume by \Cref{th:minimizer} that $\left(\Qvec_{\eta}, \Mvec_{\eta}\right)$ be the minimizers of the functional $\mathcal{E}_{\eta}\left(\Qvec, \Mvec\right)$. By selecting some fixed $\left(\Qvec, \Mvec\right)$ in the energy inequality, it similarly follows that 
\begin{eqnarray}
||\nabla_p\Mvec_{\eta}||_{L^2}\le D_1,\label{eq:prop1}\\
\bigg|\bigg|\frac{1}{\eta}\left(M_{3}\right)_{\eta, 3}\bigg|\bigg|_{L^2}\le D_2.
\end{eqnarray}
By using the elliptic regularity in \eqref{eq:reduced_maxwell}, we have
\begin{eqnarray}
\bigg|\bigg|\nabla_p\Phi_{\eta}+\frac{1}{\eta}\left(\Phi_{3}\right)_{\eta, 3}\bigg|\bigg|_{W^{1,2}}\le D_3\left(\bigg|\bigg|\nabla_p\cdot\left(\Mvec_p\right)_{\eta}+\frac{1}{\eta}\left(M_3\right)_{\eta,3}\bigg|\bigg|_{L^2}+\big|\big|\Phi_{\eta}\big|\big|_{L^2} \right)<\infty.\label{eq:improved_con3}
\end{eqnarray}
Therefore, up to a subsequence
\begin{eqnarray}
\nabla_p\Phi_{\eta}\rightharpoonup 0 \mbox{ weakly in } W^{1,2}\left(\mathbb R^3\right),\\
\frac{1}{\eta}\Phi_{\eta,3}\rightharpoonup \zeta \mbox{ weakly in } W^{1,2}\left(\mathbb R^3\right).
\end{eqnarray}
Again, due to the compact embedding  $W^{1,2}\hookrightarrow L^6$, up to a subsequence, it holds that
\begin{eqnarray}
\nabla_p\Phi_{\eta}\rightarrow 0 \mbox{ strongly in } L^6\left(\mathbb R^3\right),\\
\frac{1}{\eta}\Phi_{\eta,3}\rightarrow \zeta \mbox{ strongly in } L^6\left(\mathbb R^3\right).   
\end{eqnarray}
By the uniqueness of the $L^2$-limit of \eqref{eq:reduced_stray1} and \eqref{eq:reduced_stray2}, we infer that
\begin{eqnarray}
\nabla_p\Phi_{\eta}\rightarrow 0 \mbox{ strongly in } L^6\left(\mathbb R^3\right),\\
\frac{1}{\eta}\Phi_{\eta,3}\rightarrow \Tilde{M_3} \mbox{ strongly in } L^6\left(\mathbb R^3\right).       
\end{eqnarray}
\end{proof}
\begin{remark}
As it will become clear that the obtained improved convergences in the reminiscence of the scalar potential will nullify the nematic-stray field coupling contribution in the limiting ferronemtic energy (cf. \Cref{main_theorem}).
\end{remark}
\begin{remark}
We also mention the paper \cite{garcia2004one} for another approach based on Fourier transform methods to the dimension reduction of stray field energy.
\end{remark}
\begin{remark}
As we will finally observe in \Cref{main_theorem}, the contribution of nonlocal magnetostatic or stray field energy in the bulk ferronematic setting reduces to a local contribution accounted by the term $\int_{\omega}\frac{1}{2}M_3^2$. We speculate that other thin-film limits of micromagnetics may contribute differently in the reduced limit. The other local approximations are due to \cite{carbou2001thin, kohn2005another}, where the nonlocal magnetostatic energy reduces to a local contribution in the form of a boundary penalty $\int_{\partial\Omega}\left(\Mvec_p\cdot\bm{\nu}\right)^2$ (where $\bm{\nu}$ is the unit outward normal to the boundary $\partial\Omega$), while \cite{desimone2004recent} discusses singular magnetization patterns e.g.\ Néel wall, interior, boundary vortices, etc.. These thin-film regimes differ from each other by a logarithmic order. \newline
Also, there are regimes by Moser \cite{moser2004boundary, moser2003ginzburg, moser2005moving}, where nucleation of boundary vortices is mentioned with nonlocal vortex interaction. In contrast to those works, the thin film limit by Ignat and Kurzke \cite{ignat2023effective, ignat2021global} discusses the nucleation of boundary vortices, where the renormalized energy appears as a local contribution.
\subsection{Dimension reduction for the LdG order parameter $\Qvec$ and magnetization $\Mvec$}
\end{remark}
\begin{proposition}\label{reduced-QM}
Let $\left(\Qvec_{\eta}, \Mvec_{\eta}\right)$ be a sequence of minimizers of the functional $\mathcal{E}_{\eta}\left(\Qvec, \Mvec\right)$. Then the following convergences holds, up to a subsequence
\begin{eqnarray}
\nabla_p\Qvec_{\eta}&\rightarrow&\nabla_p\Qvec \mbox{ strongly in } L^2\left(\Omega; \mathbb R^{3\times 3}\right),\label{eq:strong_gQ}\\
\frac{1}{\eta}\Qvec_{\eta,3}&\rightarrow& 0 \mbox{ strongly in } L^2\left(\Omega; \mathbb R^{3\times 3}\right),\label{eq:strong1_gQ}\\
\nabla_p\Mvec_{\eta}&\rightarrow&\nabla_p\Mvec \mbox{ strongly in } L^2\left(\Omega; \mathbb R^{3}\right),\label{eq:strong_gM}\\
\frac{1}{\eta}\Mvec_{\eta,3}&\rightarrow& 0 \mbox{ strongly in } L^2\left(\Omega; \mathbb R^{3}\right)\label{eq:strong1_gM}.
\end{eqnarray}
\end{proposition}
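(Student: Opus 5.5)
The plan follows the strategy already used in the proof of Proposition \ref{energy_bound}: extract weak limits from the uniform energy bound, then upgrade them to strong convergence by comparing the energies of the minimizers with the energy of a suitable competitor. This competitor is built from the limit and is independent of $y_3$.

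\textbf{Compactness.} Comparing $\mathcal{E}_{\eta}(\Qvec_\eta,\Mvec_\eta)$ with a fixed admissible pair $(\Qvec,\Mvec)$ with $\Qvec_{,3}=\Mvec_{,3}=0$, $\Qvec\hat{x}_3=\beta\hat{x}_3$ (as in \eqref{eq:str1}) gives a uniform bound. This uses the lower bounds \eqref{lb:1}--\eqref{lb:4} to control the coupling terms, and it needs the magnetostatic coupling $-\frac{\chi_1\mu_0}{2}\Qvec_\eta\nabla\Phi_\eta\cdot\nabla\Phi_\eta$ to be absorbed using the bounds of Propositions \ref{prop:reduced_stray} and \ref{improved_convergence}. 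We obtain
\[
\|\nabla_p\Qvec_\eta\|_{L^2}+\tfrac{1}{\eta}\|\Qvec_{\eta,3}\|_{L^2}+\|\nabla_p\Mvec_\eta\|_{L^2}+\tfrac{1}{\eta}\|\Mvec_{\eta,3}\|_{L^2}\le C .
\]
Hence, up to a subsequence, $\Qvec_\eta\rightharpoonup\check{\Qvec}$ and $\Mvec_\eta\rightharpoonup\check{\Mvec}$ in $W^{1,2}$, strongly in $L^q$ for $q<6$. Moreover $\Qvec_{\eta,3}\to0$ and $\Mvec_{\eta,3}\to0$ in $L^2$, so $\check{\Qvec},\check{\Mvec}$ do not depend on $y_3$. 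In addition, $\frac1\eta\Qvec_{\eta,3}\rightharpoonup \bm{\xi}$ and $\frac1\eta\Mvec_{\eta,3}\rightharpoonup\bm{\theta}$ weakly in $L^2$. The surface term $\frac1\eta f^0_{sur}$ is bounded, which forces $\check{\Qvec}\hat{x}_3=\beta\hat{x}_3$. We write $\nabla_p\Qvec_\eta=\nabla_p\check{\Qvec}+B_\eta$, $\frac1\eta\Qvec_{\eta,3}=\bm{\xi}+b_\eta$, and analogously $A_\eta$, $a_\eta$ for $\Mvec$, with all remainders converging weakly to $0$.

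\textbf{Energy comparison.} The competitor is $(\check{\Qvec},\check{\Mvec})$ itself, extended constantly in $y_3$; it is admissible and has zero $y_3$-derivative, so its $\frac1{\eta^2}$ terms vanish. Minimality gives $\mathcal{E}_\eta(\Qvec_\eta,\Mvec_\eta)\le\mathcal{E}_\eta(\check{\Qvec},\check{\Mvec})$. We then pass to the limit in all the lower-order terms:
\begin{itemize}
\item the terms in $f_B$ and the Zeeman and $\Hvec_{ext}$ terms converge by the strong $L^q$ convergence;
\item the stray field energies of both sides converge to $\frac{\mu_0}{2}\int\check{M_3}^2$ by Proposition \ref{prop:reduced_stray};
\item the nematic--stray coupling converges because $\Qvec_\eta\to\check{\Qvec}$ in $L^2$ and $\nabla_p\Phi_\eta\to0$, $\frac1\eta\Phi_{\eta,3}\to\check{M_3}$ in $L^6$ (Proposition \ref{improved_convergence}), so the product passes to the limit in $L^1$;
\item the surface term has a nonnegative part $f^0_{sur}$ on the left, which vanishes for the competitor, and its $f^1_{sur}$ part converges.
\end{itemize}
Collecting these, we arrive at
\[
\limsup_{\eta\to0}\int_\Omega \tfrac{K_1}{2}\bigl(|\nabla_p\Qvec_\eta|^2+\tfrac1{\eta^2}|\Qvec_{\eta,3}|^2\bigr)+\tfrac{K_2}{2}\bigl(|\nabla_p\Mvec_\eta|^2+\tfrac1{\eta^2}|\Mvec_{\eta,3}|^2\bigr)\le\int_\Omega\tfrac{K_1}{2}|\nabla_p\check{\Qvec}|^2+\tfrac{K_2}{2}|\nabla_p\check{\Mvec}|^2 .
\]
We expand the left side with the decompositions above, exactly as in \eqref{en_comp3}. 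The cross terms vanish by weak convergence, and the inequality then forces
\[
\bm{\xi}=0,\quad \bm{\theta}=0,\quad \|B_\eta\|_{L^2}+\|b_\eta\|_{L^2}+\|A_\eta\|_{L^2}+\|a_\eta\|_{L^2}\to0 .
\]
This is \eqref{eq:strong_gQ}--\eqref{eq:strong1_gM}.

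\textbf{Main obstacle.} The delicate step is the passage to the limit in the lower-order terms of the comparison. The sign-indefinite coupling $-\frac{\chi_1\mu_0}{2}\Qvec_\eta\nabla\Phi_\eta\cdot\nabla\Phi_\eta$ requires the $L^6$ convergence of Proposition \ref{improved_convergence}, not mere $L^2$ convergence. The surface term must be handled so that the $\frac1\eta f^0_{sur}$ contribution only helps: on the left it is nonnegative and can be dropped, while on the right it vanishes since $\check{\Qvec}\hat{x}_3=\beta\hat{x}_3$. Checking that this constraint holds for the weak limit is what I would verify first, using lower semicontinuity of $f^0_{sur}$ under strong $L^2$ convergence of traces on $\omega\times\{0,1\}$.
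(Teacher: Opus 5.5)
Your proposal follows the paper's proof. You derive uniform bounds by comparing with a $y_3$-independent admissible pair, take weak limits of $\nabla_p\Qvec_{\eta}$, $\frac{1}{\eta}\Qvec_{\eta,3}$, $\nabla_p\Mvec_{\eta}$, $\frac{1}{\eta}\Mvec_{\eta,3}$, use the limit pair as competitor, and close with the liminf/limsup sandwich expanded as in \eqref{en_comp3}, which forces the weak limits of the $\frac{1}{\eta}$-derivatives to vanish and upgrades to strong convergence; unlike the paper, you also make explicit the cancellation of the bulk, surface and stray-field terms.

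One caveat: you cannot draw the uniform bound from Propositions \ref{prop:reduced_stray} and \ref{improved_convergence}, since both presuppose it. Use instead the $\eta$-uniform Calder\'on--Zygmund estimate $\|(\nabla_p\Phi_{\eta},\frac{1}{\eta}\Phi_{\eta,3})\|_{L^p(\mathbb R^3)}\le C_p\|\Mvec_{\eta}\|_{L^p(\Omega)}$, rescaled from the physical film: with $p=3$ it makes the nematic--stray coupling cubic and absorbable by the quartic terms of $f_B$, and with $p=6$, interpolated with the $L^2$ convergence of Proposition \ref{prop:reduced_stray}, it gives the $L^3$ convergence you need for that coupling term.
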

\begin{proof}
Thanks to \Cref{th:minimizer}, we assume that $\left(\Qvec_{\eta}, \Mvec_{\eta}\right)$ be a sequence of minimizers of the energy functional $\mathcal{E}_{\eta}\left(\Qvec, \Mvec\right)$, yields
\begin{eqnarray}
\mathcal{E}_{\eta}\left(\Qvec_{\eta}, \Mvec_{\eta}\right)\le\mathcal{E}_{\eta}\left(\Qvec, \Mvec\right)<\infty\label{eq:bound}
\end{eqnarray}
for some fixed $\left(\Qvec, \Mvec\right)\in \mathcal{A}_{\eta}\times\mathcal{S}_{\eta}$. Therefore, we have
\begin{eqnarray}
||\nabla_p\Mvec_{\eta}||_{L^2}\le D_1, \bigg|\bigg|\frac{1}{\eta}\Mvec_{\eta,3}\bigg|\bigg|_{L^2}\le D_2.\label{eq:M_conv1}
\end{eqnarray}
Thus, up to a subsequence,
\begin{eqnarray}
\nabla_p\Mvec_{\eta}&\rightharpoonup&\nabla_p\Mvec \mbox{ weakly in } L^2,\label{eq:M_conv3}\\
\frac{1}{\eta}\Mvec_{\eta,3}&\rightharpoonup& L_1  \mbox{ weakly in } L^2.\label{eq:M_conv4}
\end{eqnarray}
Similarly,
\begin{eqnarray}
||\nabla_p\Qvec_{\eta}||_{L^2}\le D'_1, \bigg|\bigg|\frac{1}{\eta}\Qvec_{\eta,3}\bigg|\bigg|_{L^2}\le D'_2.\label{eq:Q_conv2}
\end{eqnarray}
Therefore, up to a subsequence, we have
\begin{eqnarray}
\nabla_p\Qvec_{\eta}&\rightharpoonup&\nabla_p\Qvec \mbox{ weakly in } L^2,\label{eq:Q_conv3}\\
\frac{1}{\eta}\Qvec_{\eta,3}&\rightharpoonup& L_2  \mbox{ weakly in } L^2.\label{eq:Q_conv4}
\end{eqnarray}
We select $\Qvec_{\eta}=\Qvec$ such that $f^0_{sur}\left(\Qvec, \hat{x}_3\right)=0$ or $\Qvec\hat{x}_3=\beta\hat{x}_3$, and $\Mvec_{\eta}=\Mvec$ such that $\Mvec_{,3}=0$ as a recovery sequence, and obtain
\begin{eqnarray}
\limsup_{\eta\rightarrow 0}\int_{\Omega}|\nabla_p\Qvec_{\eta}|^2+\frac{1}{\eta^2}\Qvec_{\eta,3}+|\nabla_p\Mvec_{\eta}|^2+\frac{1}{\eta^2}\Mvec_{\eta,3}\le\int_{\Omega}|\nabla_p\Qvec|^2+|\nabla_p\Mvec|^2.\label{lim-sup-eq}
\end{eqnarray}
Applying the lower semicontinuity property of the $L^2$-norm to convergences \eqref{eq:M_conv3}, \eqref{eq:M_conv4}, \eqref{eq:Q_conv3} and \eqref{eq:Q_conv4}, and using the limsup inequality \eqref{lim-sup-eq}, we have
\begin{eqnarray}
& &\int_{\Omega}|\nabla_p\Qvec|^2+|\nabla_p\Mvec|^2+L^2_1+L_2^2\nonumber\\
&\le&\liminf_{\eta\rightarrow 0}\int_{\Omega}|\nabla_p\Qvec_{\eta}|^2+\frac{1}{\eta^2}\Qvec_{\eta,3}+|\nabla_p\Mvec_{\eta}|^2+\frac{1}{\eta^2}\Mvec_{\eta,3}\nonumber\\
&\le&\limsup_{\eta\rightarrow 0}\int_{\Omega}|\nabla_p\Qvec_{\eta}|^2+\frac{1}{\eta^2}\Qvec_{\eta,3}+|\nabla_p\Mvec_{\eta}|^2+\frac{1}{\eta^2}\Mvec_{\eta,3}\nonumber\\
&\underset{\eqref{lim-sup-eq}}{\le}&\int_{\Omega}|\nabla_p\Qvec|^2+|\nabla_p\Mvec|^2.\label{eq:limit_ineq}
\end{eqnarray}
By \eqref{eq:limit_ineq}, $L_1=L_2=0$ and
\begin{eqnarray}
\lim_{\eta\rightarrow 0}\int_{\Omega}|\nabla_p\Qvec_{\eta}|^2+\frac{1}{\eta^2}\Qvec_{\eta,3}+|\nabla_p\Mvec_{\eta}|^2+\frac{1}{\eta^2}\Mvec_{\eta,3}=\int_{\Omega}|\nabla_p\Qvec|^2+|\nabla_p\Mvec|^2.
\end{eqnarray} 
The strong convergences \eqref{eq:strong_gQ}, \eqref{eq:strong1_gQ}, \eqref{eq:strong_gM} and \eqref{eq:strong1_gM} follow by repeating the same approach as in Proposition \ref{energy_bound}.
\end{proof}
\begin{remark}
Here we note that due to a priori assumption of nematic behavior on the boundary, i.e., the special regime of the surface energy, the limiting nematic order parameter $\Qvec$ has two independent variables \cite{golovaty2014minimizers}.
\end{remark}
Finally, we are in a position to present the limiting energy functional.
\begin{theorem}\label{main_theorem}
Let $\Omega\subset\mathbb R^3$ be a bounded domain such that $\partial\Omega$ is Lipschitz and $\left(\Qvec_{\eta}, \Mvec_{\eta}\right)$ be a sequence of minimizers of $\mathcal{E}_{\eta}\left(\Qvec, \Mvec\right)$. Then, up to a subsequence 
\begin{eqnarray}
\Qvec_{\eta}\rightharpoonup\Qvec &\mbox{ weakly in }& W^{1,2}\left(\Omega; \mathbb R^{3\times 3}\right),\label{eq:Q_1}\\
\Mvec_{\eta}\rightharpoonup\Mvec &\mbox{ weakly in }& W^{1,2}\left(\Omega; \mathbb R^3\right)\label{eq:M_1}
\end{eqnarray}
and $\left(\Qvec, \Mvec\right)$ is a minimizer of the limiting energy functional $\mathcal{E}_0\left(\Qvec, \Mvec\right)$. The limiting energy functional $\mathcal{E}_0\left(\Qvec, \Mvec\right)$ is obtained as
\begin{eqnarray}
\mathcal{E}_0\left(\Qvec, \Mvec\right)=\int_{\omega}\bigg[\frac{K_1}{2}|\nabla_p\Qvec|^2+\frac{K_2}{2}|\nabla_p\Mvec|^2+f_B\left(\Qvec, \Mvec\right)-\mu_0\Mvec\cdot\Hvec_{ext}\nonumber\\
~~~~~~~+\frac{\mu_0}{2}M_3^2 - \frac{\chi_1\mu_0}{2}\Qvec\Hvec_{ext}\cdot\Hvec_{ext}\bigg]. \label{eq:energy_reduced}
\end{eqnarray}
\end{theorem}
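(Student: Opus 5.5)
The proof is a $\Gamma$-convergence-type argument for minimizers, assembled from Propositions \ref{prop:reduced_stray}, \ref{improved_convergence} and \ref{reduced-QM}.

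\textbf{Compactness.} Compare $\mathcal{E}_\eta(\Qvec_\eta,\Mvec_\eta)$ with $\mathcal{E}_\eta$ evaluated at a fixed competitor. The competitor should be independent of $y_3$, satisfy $\Qvec\hat{x}_3=\beta\hat{x}_3$ (so $f^0_{sur}=0$), and obey the lateral boundary conditions. Its energy is then bounded uniformly in $\eta$. The lower bound \eqref{lb:5}, adapted with the stray-field coupling handled as below, gives uniform $W^{1,2}$ bounds. This yields the weak limits \eqref{eq:Q_1}--\eqref{eq:M_1}, strong $L^q$ convergence for $q<6$, and, by Proposition \ref{reduced-QM}, $\Qvec_{,3}=\Mvec_{,3}=0$. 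The limits are therefore functions on $\omega$, and the integral over $\Omega$ collapses to one over $\omega$.

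\textbf{Liminf inequality along the minimizers.} Treat each term of \eqref{eq:energy_reduced} separately.
\begin{itemize}
\item The gradient terms converge by Proposition \ref{reduced-QM}. The penalized parts $\eta^{-2}|\Qvec_{\eta,3}|^2$ and $\eta^{-2}|\Mvec_{\eta,3}|^2$ tend to zero.
\item The bulk potential $f_B$, the Zeeman term and the $\Hvec_{ext}$-coupling converge by strong $L^4$ convergence of $\Qvec_\eta,\Mvec_\eta$ and continuity of $\Hvec_{ext}$.
\item The surface term $\eta^{-1}f_{sur}$ is nonnegative in the regime $\alpha',\gamma'\ge 0$. It can be dropped for a lower bound, or shown to vanish in the limit because the limit satisfies $f^0_{sur}=0$.
\item The stray-field energy converges to $\frac{\mu_0}{2}\int_\omega M_3^2$ by \eqref{eq:str2'}.
\item The nematic--stray field coupling is the main obstacle. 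Split it as
\[
\Qvec_\eta\bigl(\nabla_p\Phi_\eta+\eta^{-1}\Phi_{\eta,3}e_3\bigr)\cdot\bigl(\nabla_p\Phi_\eta+\eta^{-1}\Phi_{\eta,3}e_3\bigr).
\]
Use the $L^6$ strong convergences of Proposition \ref{improved_convergence} together with $\Qvec_\eta\to\Qvec$ in $L^{3/2}$ or better, via Hölder with exponents $(3/2,6,6)$. The cross terms and the $\nabla_p$ terms vanish. The remaining term tends to $\int_\Omega (\Qvec)_{33}M_3^2$.
\end{itemize}

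\textbf{Accounting for the leftover coupling term.} The statement omits the limit $\int (\Qvec)_{33}M_3^2$ of the coupling, so I would argue that it is absorbed by the boundary structure. Since $\Qvec\hat{x}_3=\beta\hat{x}_3$, the term equals $\beta\int_\omega M_3^2$. It is then either a constant renormalization of the coefficient of $M_3^2$ or, under the paper's convention, dropped. This is the point I expect to be most delicate.

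\textbf{Minimality via a recovery sequence.} For any admissible 2D pair $(\Qvec',\Mvec')$, take the constant-in-$y_3$ extension as the recovery sequence. Proposition \ref{prop:reduced_stray}, applied to this fixed $\Mvec'$, gives $\mathcal{E}_\eta(\Qvec',\Mvec')\to\mathcal{E}_0(\Qvec',\Mvec')$. Combining this with $\mathcal{E}_\eta(\Qvec_\eta,\Mvec_\eta)\le\mathcal{E}_\eta(\Qvec',\Mvec')$ and the liminf step gives $\mathcal{E}_0(\Qvec,\Mvec)\le\mathcal{E}_0(\Qvec',\Mvec')$. Hence $(\Qvec,\Mvec)$ is a minimizer of $\mathcal{E}_0$.
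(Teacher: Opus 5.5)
Your architecture matches the paper's. You get compactness by comparison with a $y_3$-independent competitor. You pass to the limit term by term via Propositions~\ref{prop:reduced_stray}, \ref{improved_convergence} and \ref{reduced-QM}. You get minimality by comparison with the $y_3$-constant extension.

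The genuine gap is exactly the step you flagged. Your Hölder computation is correct: the nematic--stray field coupling contributes $-\frac{\chi_1\mu_0}{2}\int_\omega Q_{33}M_3^2=-\frac{\chi_1\mu_0\beta}{2}\int_\omega M_3^2$ in the limit. The same term also appears along your recovery sequence, so your claim $\mathcal{E}_\eta(\Qvec',\Mvec')\to\mathcal{E}_0(\Qvec',\Mvec')$ contradicts your own computation. Carried out consistently, your argument shows that $(\Qvec,\Mvec)$ minimizes $\mathcal{E}_0-\frac{\chi_1\mu_0\beta}{2}\int_\omega M_3^2$. That is the functional with coefficient $\frac{\mu_0}{2}(1-\chi_1\beta)$ in front of $M_3^2$. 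The extra term is not a constant, since it depends on $M_3$, so you can neither ``renormalize'' nor drop it: the two functionals have different minimizers in general. Nor can $\mu_0$ be silently rescaled, since it also multiplies the Zeeman and $\Hvec_{ext}$ terms.

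The paper disposes of this term by asserting \eqref{eq:coupling_str}, i.e.\ that the entire coupling integrand tends to $0$ in $L^1$, citing Proposition~\ref{improved_convergence}. But that same proposition gives $\frac{1}{\eta}\Phi_{\eta,3}\to M_3$ in $L^6$. Your computation therefore shows that \eqref{eq:coupling_str} can only hold if $Q_{33}M_3^2=0$ a.e.\ in the limit. Effectively this means $\beta=0$ (or $\chi_1=0$), i.e.\ the planar case $\Qvec\hat{x}_3=0$ implicit in the 2D model \eqref{eq:26}, where only $Q_{11},Q_{12}$ enter. To close the gap, you have two options:
either add that hypothesis, so that your Hölder step reproduces \eqref{eq:coupling_str} and the stated $\mathcal{E}_0$; or prove the theorem with the corrected coefficient.

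Two smaller points. First, identifying $Q_{33}=\beta$ in the limit needs $\alpha',\gamma'>0$, not merely $\ge 0$. Second, your coercivity step is circular: \eqref{lb:5} uses the $\varepsilon|\Hvec_{\Mvec}|^4$ penalization, which is absent from $\mathcal{E}_\eta$, and the coupling cannot be ``handled as below'' before bounds are available. A non-circular route is $\int_\Omega|\Qvec||\Hvec_{\Mvec}|^2\le\|\Qvec\|_{L^3}\|\Hvec_{\Mvec}\|_{L^3}^2\le C\|\Qvec\|_{L^3}\|\Mvec\|_{L^3}^2$. This uses the $L^p$-boundedness of $\Mvec\mapsto\Hvec_{\Mvec}$, which is uniform in $\eta$ after rescaling, and the right-hand side is then absorbed by the quartic terms of $f_B$. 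The paper is equally silent on this point.
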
 
\begin{proof}
The claims \eqref{eq:Q_1} and \eqref{eq:M_1} are immediate consequences of \eqref{eq:bound}. By the compact embedding $W^{1,2}\hookrightarrow L^6$, up to a subsequence, we have
\begin{eqnarray}
\Qvec_{\eta}\rightarrow\Qvec \mbox{ strongly in } L^6\left(\Omega; \mathbb R^{3\times 3}\right),\label{strng_Q}\\
\Mvec_{\eta}\rightarrow\Mvec \mbox{ strongly in } L^6\left(\Omega; \mathbb R^{3}\right).\label{strng_M}
\end{eqnarray}
Due to Proposition \ref{improved_convergence} and the convergence \eqref{strng_Q}, up to subsequences (without reindexing), it follows that
\begin{eqnarray}
\Qvec_{\eta}\left(\nabla_p\Phi_{\eta}+\frac{1}{\eta}\left(\Phi_{,3}\right)_{\eta}e_3\right)\cdot\left(\nabla_p\Phi_{\eta}+\frac{1}{\eta}\left(\Phi_{,3}\right)_{\eta}e_3\right)\rightarrow 0 \mbox{ in } L^1 \mbox{ as } \eta\rightarrow 0.\label{eq:coupling_str}
\end{eqnarray}
By the closure property of strong convergences \eqref{strng_Q} and \eqref{strng_M}, up to subsequences (without reindexing)
\begin{eqnarray}
f\left(\Qvec_{\eta}, \Mvec_{\eta}\right)\rightarrow f\left(\Qvec, \Mvec\right) \mbox{ in } L^1 \mbox{ as } \eta\rightarrow 0.\label{conv_bulk}
\end{eqnarray}
Using Proposition \ref{improved_convergence}, Proposition \ref{reduced-QM} along with the convergences \eqref{eq:coupling_str} and \eqref{conv_bulk}, we obtain
\begin{eqnarray}
& &\lim_{\eta\rightarrow 0}\mathcal{E}_{\eta}\left(\Qvec_{\eta}, \Mvec_{\eta}\right)\nonumber\\
&=&\lim_{\eta\rightarrow 0}\int_{\Omega}\biggr[\frac{K_1}{2}\left(|\nabla_p\Qvec_{\eta}|^2+\frac{1}{\eta^2}\Qvec_{\eta,3}\right)+\frac{K_2}{2}\left(|\nabla_p\Mvec_{\eta}|^2+\frac{1}{\eta^2}\Mvec_{\eta,3}\right)+f\left(\Qvec_{\eta}, \Mvec_{\eta}\right)-\mu_0\Mvec_{\eta}\cdot\Hvec_{ext}\nonumber\\
& &~~~~~~~~~~~~~~- \frac{\chi_1\mu_0}{2}\Qvec_{\eta}\Hvec_{ext}\cdot\Hvec_{ext}- \frac{\chi_1\mu_0}{2}\Qvec_{\eta}\left(\nabla_p\Phi_{\eta}+\frac{1}{\eta}\left(\Phi_{,3}\right)_{\eta}e_3\right)\cdot\left(\nabla_p\Phi_{\eta}+\frac{1}{\eta}\left(\Phi_{,3}\right)_{\eta}e_3\right)\biggr]\nonumber\\
&=&\int_{\Omega}\bigg[\frac{K_1}{2}|\nabla_p\Qvec|^2+\frac{K_2}{2}|\nabla_p\Mvec|^2+\frac{\mu_0}{2}M_3^2-\mu_0\Mvec\cdot\Hvec_{ext}-\frac{\chi_1\mu_0}{2}\Qvec\Hvec_{ext}\cdot\Hvec_{ext}\bigg].
\end{eqnarray}
Therefore the last claim of \Cref{main_theorem} follows by comparing $\mathcal{E}_{\eta}\left(\Qvec_{\eta}, \Mvec_{\eta}\right)$ to $\mathcal{E}_{\eta}\left(\Qvec, \Mvec\right)$ for any fixed $\left(\Qvec, \Mvec\right)\in\mathcal{Q}_{\eta}\times\mathcal{M}_{\eta}$. By the choice of recovery sequence in Proposition \ref{reduced-QM}, the limiting ferronematic energy (cf. \eqref{eq:energy_reduced}) is defined over $\omega$.
\end{proof}
\section{Numerical observations}\label{sec:numerical observations}
In the obtained limiting ferronematic energy (cf. \eqref{eq:energy_reduced}), applying suitable scaling (see  \cite{dutta2025study}) to make it dimensionless, we can write the associated gradient flow system as follows (for details, see \cite{dutta2026existence})
\begin{eqnarray}
\small
\left\{ \begin{aligned}
\tau_1\frac{\partial}{\partial t}\begin{pmatrix}
Q_{11}\\
Q_{12}
\end{pmatrix} &=2l_1\Delta \begin{pmatrix}
Q_{11}\\
Q_{12}
\end{pmatrix} - \left(\frac{|\Qvec|^2}{2} - 1\right)\begin{pmatrix}
Q_{11}\\
Q_{12}
\end{pmatrix} + \frac{c_1}{2}\begin{pmatrix}
M_1^2 - M_2^2\\
2M_1M_2
\end{pmatrix}+ \frac{c_2}{2}\begin{pmatrix}
H_1^2 - H_2^2\\
2H_1H_2
\end{pmatrix},\\
\tau_2\frac{\partial}{\partial t}\begin{pmatrix}
M_1\\
M_2\\
M_3
\end{pmatrix}&= \xi l_2\Delta \begin{pmatrix}
M_1\\
M_2\\
M_3
\end{pmatrix} - \xi\left(|\Mvec|^2-1\right)\begin{pmatrix}
M_1\\
M_2\\
M_3
\end{pmatrix} + c_1\begin{pmatrix}
Q_{11}M_1 + Q_{12}M_2\\
Q_{12}M_1 - Q_{11}M_2\\
0
\end{pmatrix}
+ c_3\xi\begin{pmatrix}
H_1\\
H_2\\
H_3-M_3
\end{pmatrix},
\end{aligned}\right.\nonumber\\ \label{eq:26}
\end{eqnarray}
where $\tau_1$ and $\tau_2$ are dissipation coefficients \cite{onsager1931reciprocal1, onsager1931reciprocal2}. We prescribe a tangent-type boundary condition with $M_3=0$ on the boundary and solve \eqref{eq:26} using the Crank-Nicolson finite difference method with Newton's linearization technique \cite{Burden-Faires, smith1985numerical}. For the particular solution scheme, we refer to \cite{dutta2025study}, and for more details, see \cite{dutta2026existence}. In the limits of dissipation coefficients $\tau_1, \tau_2\rightarrow 0$, the gradient flow system (cf. \eqref{eq:26}) converges to the corresponding Euler-Lagrange system, and we present some numerical observations for the nonzero $M_3$ over the case when $M_3=0$. Physically, the term nonzero $M_3$ is responsible for a stray field due to the Gioia and James approximation \cite{Giogia}.
\begin{figure}[ht!]
\centering

\begin{subfigure}[t]{0.49\textwidth}
\centering
\textbf{(a)} $\Qvec$ \quad ($M_3=0$ vs $M_3\neq0$)

\begin{tikzpicture}
  \node (Qbase) {\includegraphics[width=3.6cm]{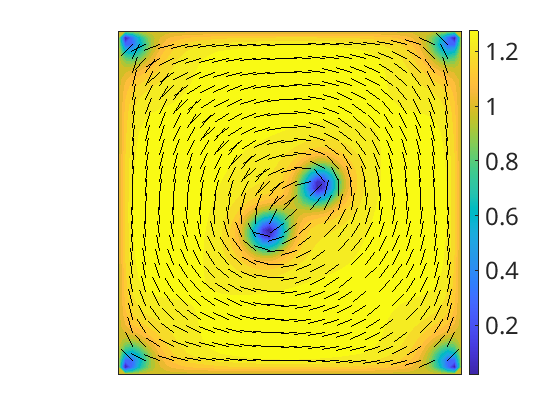}};
  \node[anchor=west] at ([xshift=-0.52cm] Qbase.east)
    {\includegraphics[width=3.6cm]{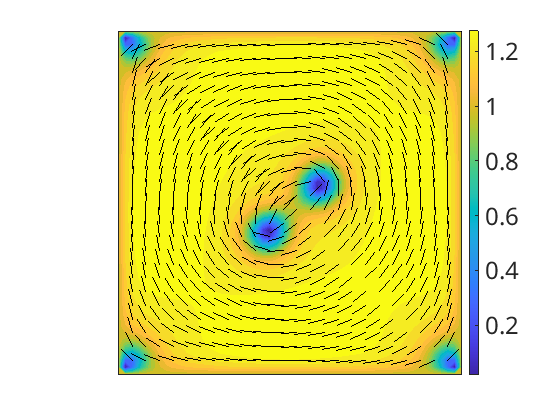}};
    \node[anchor=west] at ([xshift=35mm] Qbase.east)
    {\scalebox{0.8}{\rotatebox{90}{$|\Hvec_{\mathrm{ext}}|=0$}}};
\end{tikzpicture}
\vspace{1mm}
\begin{tikzpicture}
  \node (Qbase) {\includegraphics[width=3.6cm]{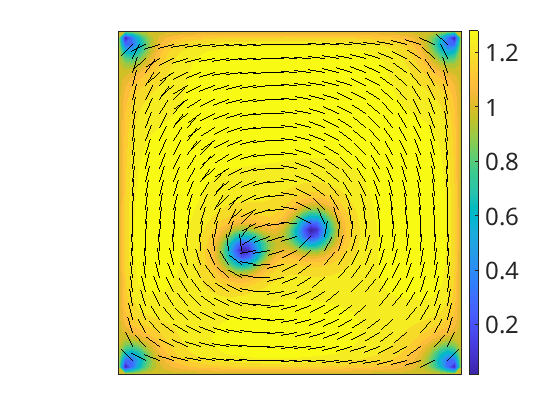}};
  \node[anchor=west] at ([xshift=-0.52cm] Qbase.east)
    {\includegraphics[width=3.6cm]{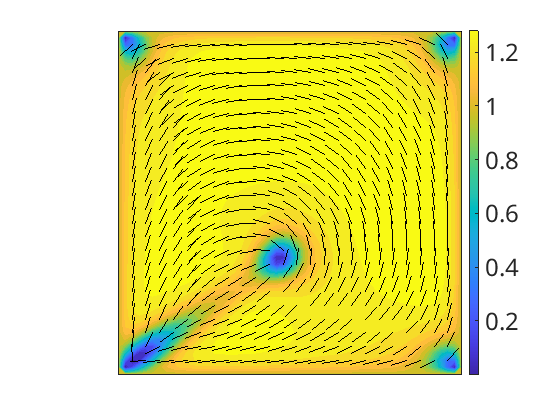}};
    \node[anchor=west] at ([xshift=35mm] Qbase.east)
    {\scalebox{0.8}{\rotatebox{90}{$|\Hvec_{\mathrm{ext}}|=0.002658$}}};
\end{tikzpicture}
\vspace{1mm}
\begin{tikzpicture}
  \node (Qbase) {\includegraphics[width=3.6cm]{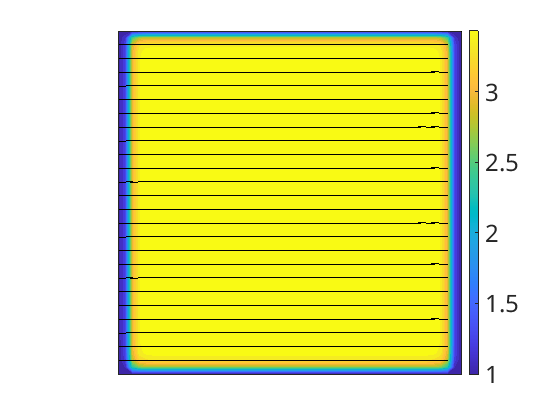}};
  \node[anchor=west] at ([xshift=-0.52cm] Qbase.east)
    {\includegraphics[width=3.6cm]{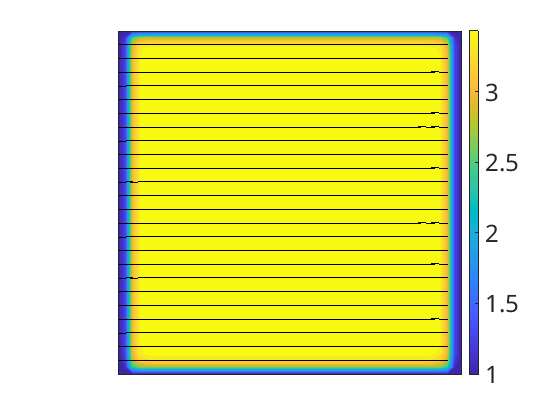}};
    \node[anchor=west] at ([xshift=35mm] Qbase.east)
    {\scalebox{0.8}{\rotatebox{90}{$|\Hvec_{\mathrm{ext}}|=1$}}};
\end{tikzpicture}

\begin{tikzpicture}
  \draw[->, thick] (0,0) -- (0.6,0)
    node[midway, above] {$\Hvec_{\mathrm{ext}}$};
\end{tikzpicture}
\end{subfigure}
\hfill
\begin{subfigure}[t]{0.49\textwidth}
\centering
\textbf{(b)} $\Mvec$ \quad ($M_3=0$ vs $M_3\neq0$)

\begin{tikzpicture}
  \node (Mbase) {\includegraphics[width=3.6cm]{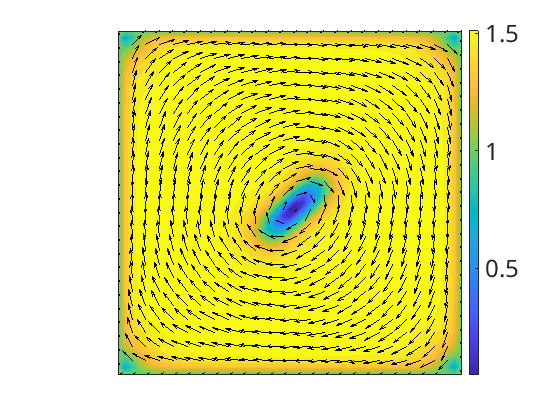}};
  \node[anchor=west] at ([xshift=-0.52cm] Mbase.east)
    {\includegraphics[width=3.6cm]{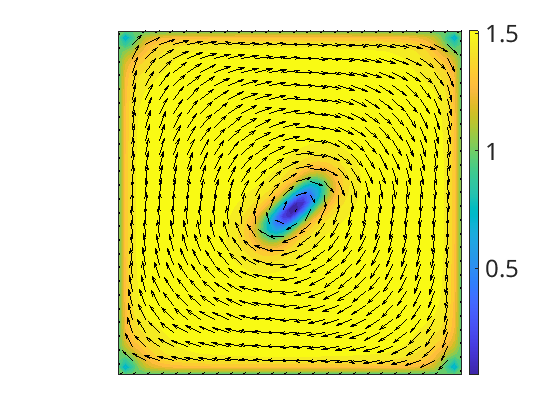}};
\end{tikzpicture}
\vspace{1mm}
\begin{tikzpicture}
  \node (Mbase) {\includegraphics[width=3.6cm]{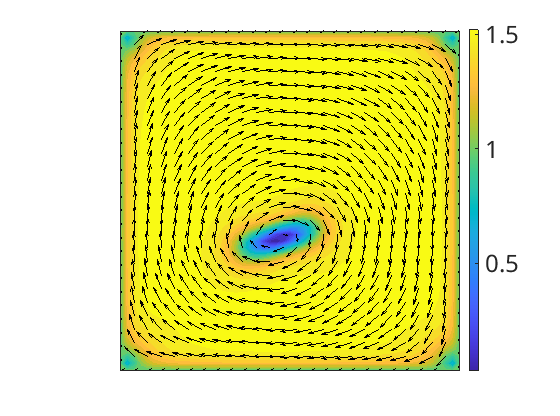}};
  \node[anchor=west] at ([xshift=-0.52cm] Mbase.east)
    {\includegraphics[width=3.6cm]{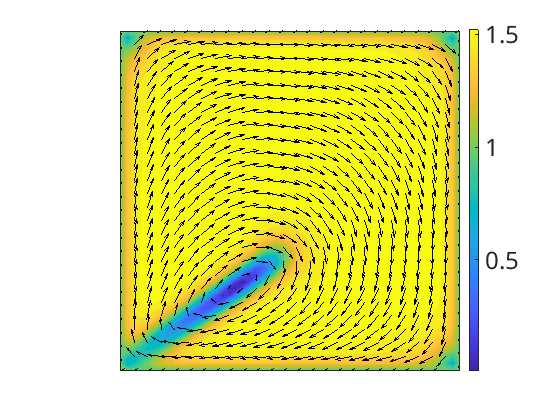}};
\end{tikzpicture}
\vspace{1mm}
\begin{tikzpicture}
  \node (Mbase) {\includegraphics[width=3.6cm]{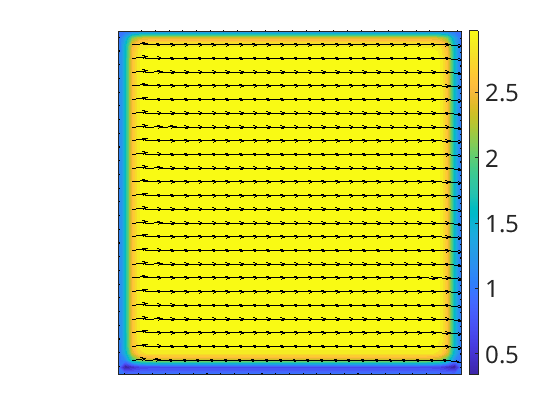}};
  \node[anchor=west] at ([xshift=-0.52cm] Mbase.east)
    {\includegraphics[width=3.6cm]{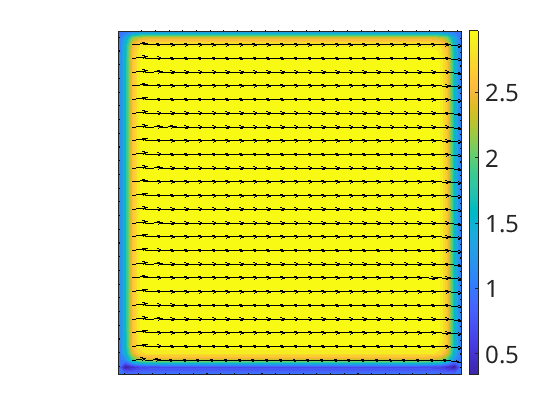}};
\end{tikzpicture}
\vspace{1mm}
\begin{tikzpicture}
  \draw[->, thick] (0,0) -- (0.6,0)
    node[midway, above] {$\Hvec_{\mathrm{ext}}$};
\end{tikzpicture}
\end{subfigure}
\caption{\textbf{Nematic and magnetic profiles for different activated magnetic fields $\Hvec_{ext}$ ($0, 0.002658, 1$), with ($M_3\neq0$) and without ($M_3=0$) stray field energy. Values of parameters: $l_1=l_2=0.001$, $c_1=0.5$, $c_2=8$, $c_3=2$, $\xi=1$, $\tau_1=\tau_2=0.0001$. 
}}
\label{Fig-new1}
\end{figure} 
\\
\\
The observations we are presenting \Cref{Fig-new1} are discussed in \cite{dutta2025study, dutta2026existence}. Here we present the case that the stray field is localized in the interior of the domain by imposing $M_3=0$ on the boundary. Our purpose is to show a more visible importance of the limiting ferronematic energy \eqref{eq:energy_reduced} in a physical setting.
In \Cref{Fig-new1}, we have presented a set of ferronematic profiles, where the middle row indicates a clean difference in the defect localization in the presence of an activated magnetic field $\Hvec_{ext}=\left(0.002658, 0, 0\right)$. This observation supports the physical significance of the obtained reduced ferronematic energy \eqref{eq:energy_reduced}. The numerical observations taken with local magnetostatic energy in a thin film setting clearly support the proposed generalized ferronematic energy (cf. \eqref{energy-intro}) in the bulk setting, too.
\section{Conclusions}\label{conclusions}
This work can be a first step toward understanding effective energy laws for ferroenematic materials via mathematical explanations. We propose a ferronematic energy in a bulk setting following \cite{Mertelj}, where we explicitly incorporate the nonlocal influence of the magnetostatic energy. We then provide a reduced ferronematic energy via the weak convergence methods embedded in $\Gamma$-convergence in a thin-film setting. The approach follows the thin-film limit derivation for the magnetostatic energy by Gioia and James \cite{Giogia} in the ferronematics setup. There might be scopes to explore other finer thin-film limits (which generally differ by a logarithmic scale) of magnetostatic energy (see e.g.\ \cite{desimone2004recent}) in the ferronematics framework.  
\section{Acknowledgements}
S.D. is grateful to DAAD (Deutscher Akademischer Austauschdienst) for supporting this work through a doctoral fellowship, without which this work would not have been possible. Also, S.D. is grateful to Stefanie Petermichl for the support of her research through the Humboldt Professorship award from Humboldt foundation. S.D. would like to thank Anja Schlömerkemper for early discussions about the general setting of this work.

\bibliographystyle{unsrtnat}
\bibliography{references}

\end{document}